\newcommand{\bg}{\begin{equation}}
\newcommand{\ed}{\end{equation}}
\newcommand{\bga}{\begin{eqnarray}}
\newcommand{\eda}{\end{eqnarray}}
\def\cbdu{\par{\raggedleft$\Box$\par}}
\newtheorem {Theorem}  {Theorem}
\numberwithin{Theorem}{section}
\newtheorem {Lemma}[Theorem]  {Lemma}
\theoremstyle{definition}
\newtheorem{Definition}[Theorem]{Definition}
\theoremstyle{remark}
\newtheorem {Corollary}[Theorem]{\bf Corollary}
\chardef\csname pre amssym.def
\def\undefine#1{\let#1\undefined}
\def\newsymbol#1#2#3#4#5{\let\next@\relax
 \ifnum#2=\@ne\let\next@\msafam@\else
 \ifnum#2=\tw@\let\next@\msbfam@\fi\fi
 \mathchardef#1="#3\next@#4#5}
\def\mathhexbox@#1#2#3{\relax
 \ifmmode\mathpalette{}{\m@th\mathchar"#1#2#3}%
 \else\leavevmode\hbox{$\m@th\mathchar"#1#2#3$}\fi}
\def\hexnumber@#1{\ifcase#1 0\or 1\or 2\or 3\or 4\or 5\or 6\or 7\or 8\or
 9\or A\or B\or C\or D\or E\or F\fi}
\font\teneufm=eufm10 \font\seveneufm=eufm7 \font\fiveeufm=eufm5
\newcounter{remark}
\newcommand{\e}{\epsilon}
\renewcommand{\a}{\alpha}
\renewcommand{\b}{\beta}
\newcommand{\s}{\sigma}
\newcommand{\R}{\mathbf{R}}
\def  \R   {{\mathbb R}}
\def  \T   {{\mathbb T}}
\def  \12  {{\frac{1}{2}}}
\def  \tr   {\operatorname{tr}}
\def\build#1_#2^#3{\mathrel{\mathop{\kern 0pt#1}\limits_{#2}^{#3}}}
\begin{document}
%\currannalsline{0}{2006}

\title[Harmonic analysis techniques in regularity problems]{Application of harmonic analysis techniques to regularity problems of dissipative equations}

\author [Mimi Dai]{Mimi Dai}
\address{Department of Mathematics, Stat. and Comp. Sci.,  University of Illinois Chicago, Chicago, IL 60607,USA}
\email{mdai@uic.edu} 

\author [Han Liu]{Han Liu}
\address{Department of Mathematics, Stat. and Comp. Sci.,  University of Illinois Chicago, Chicago, IL 60607,USA}
\email{hliu94@uic.edu}

\thanks{The work of the authors was partially supported by NSF Grant
DMS--1815069.}

\begin{abstract}

We discuss recent advances in the regularity problem of a variety of fluid equations and systems. 
The purpose is to illustrate the advantage of harmonic analysis techniques in obtaining sharper conditional regularity results when compared to classical energy methods.

\bigskip

KEY WORDS: Navier-Stokes equations; complex fluids; Littlewood-Paley decomposition theory; regularity/blow-up criteria.

\hspace{0.02cm}CLASSIFICATION CODE: 76D03, 35Q35.
\end{abstract}

\maketitle

\section{Overview}

In this paper we would like to draw readers' attention to recent progress in the regularity problems of a variety of dissipative equations that describe the motion of certain fluid or complex fluid. The emphasize is to introduce a type of low modes regularity criteria for the Navier-Stokes equations (NSE) and several other partial differential equation models akin to it. In other words, the existence of global regular solutions to these equations can be achieved under a condition that some norm of the low frequency parts of the solutions are controlled. It is also our goal to present a wavenumber splitting framework based on techniques from harmonic analysis, applying which the regularity criteria were obtained. 

We first recall some background of the NSE and various fluid equations. 

\subsection{From the NSE to complex fluid systems}\label{bkgrd}

The  incompressible viscous NSEs, a prototype of a series of fluid equations to appear in this paper, are given by 
\begin{equation}\label{nse}
\begin{cases}
u_t + (u \cdot \nabla) u -\nu \Delta u = -\nabla p, \ \ x\in \mathbb{R}^d, \ t \geq 0, \\
\nabla \cdot u =0, \\
%u\vert_{t=0}=u_0,
\end{cases}
\end{equation}
where $u$ is the fluid velocity, $p$ represents the pressure, and $\mu$ stands for the constant viscosity coefficient. 

It is know that global regularity for the NSE in three dimensions remains an outstanding open problem. 
%One way to approach the problem is via the Leray-Hopf solutions,
On contrast, a Leray-Hopf weak solution was shown to exist globally in time by Leray \cite{L2} and Hopf \cite{H}.
\begin{Definition}
Let $\dot C^\infty_0$ denote the space of test functions that are smooth, with compact support, and divergence free.  Let $\langle, \rangle$ denote the $L^2$-inner product. If a vector valued function $u(t,x) \in C_w(0,T; L^2(\R^3))$ satisfies the weak formulation of system (\ref{nse})
\begin{equation}
\begin{cases}
&\int_0^T (-\langle u, \partial_t \varphi\rangle+\langle (u\cdot \nabla)u, \varphi \rangle + \nu\langle \nabla u, \nabla \varphi \rangle )\mathrm{d}t =0,\\ 
&\int_0^T \langle u, \nabla \varphi \rangle \mathrm{d}s=0, \ \ \forall \varphi \in \dot C^\infty_0([0,T]\times \R^3),
\end{cases}
\end{equation}
then $u$ is called a weak solution of the NSE on $[0,T]$.

A weak solution $u$ to system (\ref{nse}) with $\nu >0$ on $[0, T]$ is a Leray-Hopf solution provided that $u \in L^\infty(0,T; L^2(\R^3)) \cap L^2(0,T; H^1(\R^3))$ and that the following energy inequality
\begin{equation}\label{lyhfs}
\|u(t)\|_2^2 +\nu\int_{t_0}^t \|\nabla u(s)\|_2^2 \mathrm{d}s \leq \|u_0\|_2^2 
\end{equation}
holds for almost every $t_0 \in [0, T]$ and $t \in (t_0, T].$
\end{Definition}
Leray, in his pioneering work \cite{L2}, established global existence of Leray-Hopf solutions for initial data with finite energy.

\begin{Theorem}
Let $u_0 \in L^2_\s(\R^3)$, with $L^2_\s$ being the space of divergence
free $L^2$-functions. There exists a global in time Leray-Hopf solution $u$ to system (\ref{nse}).
\end{Theorem}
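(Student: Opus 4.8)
The plan is to construct the solution by the Galerkin method and then pass to the limit using compactness. First I would fix an orthonormal basis $\{w_k\}_{k\geq 1}$ of $L^2_\s(\R^3)$ consisting of eigenfunctions of the Stokes operator, and for each $n$ seek an approximate solution $u^n=\sum_{k=1}^n g^n_k(t)\,w_k$ solving the system obtained by applying the Leray projection $\Pp$ onto divergence-free fields in \rf{nse} and testing against $w_1,\dots,w_n$. Since the $w_k$ are divergence free, the pressure is eliminated, and the problem reduces to a finite system of ODEs for the coefficients $g^n_k(t)$ whose local-in-time solvability is immediate from Picard--Lindel\"of.

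The decisive a priori estimate comes from testing the projected equation against $u^n$ itself. Because $u^n$ is divergence free, the nonlinear term is orthogonal to $u^n$, that is $\langle (u^n\cdot\nabla)u^n,u^n\rangle=0$, so it contributes nothing to the energy balance. This yields the identity
\begin{equation}
\haf\frac{d}{dt}\|u^n(t)\|_2^2+\nu\|\nabla u^n(t)\|_2^2=0,
\end{equation}
and hence $\|u^n(t)\|_2^2+2\nu\int_0^t\|\nabla u^n(s)\|_2^2\,ds\le\|u_0\|_2^2$. These bounds are uniform in $n$; in particular they show the ODE solutions do not blow up and extend to all of $[0,T]$, and they give that $\{u^n\}$ is bounded in $L^\infty(0,T;L^2)\cap L^2(0,T;H^1)$.

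Next I would extract a subsequence converging to a candidate $u$. The uniform bounds give weak-$*$ convergence in $L^\infty_tL^2_x$ and weak convergence in $L^2_tH^1_x$. The main obstacle is the nonlinear term, since weak convergence does not pass through products. To handle it I would bound $\p_t u^n$ uniformly in $L^{4/3}(0,T;H^{-1})$ by estimating each term of the equation (the nonlinearity controlled via $\|(u^n\cdot\nabla)u^n\|_{H^{-1}}\le C\|u^n\|_2^{1/2}\|\nabla u^n\|_2^{3/2}$ in three dimensions), and then invoke the Aubin--Lions--Simon compactness lemma. Because $\R^3$ is unbounded the embedding $H^1\hook L^2$ is not globally compact, so I would apply the lemma on an exhausting sequence of balls and diagonalize to obtain strong convergence $u^n\to u$ in $L^2(0,T;L^2_{loc})$. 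This suffices to pass to the limit in $\langle(u^n\cdot\nabla)u^n,\varphi\rangle$ against any fixed compactly supported test function $\varphi$.

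Finally I would verify that $u$ meets the definition. The linear terms pass to the limit by weak convergence and the nonlinear term by the strong local convergence just obtained, so $u$ solves the weak formulation. The energy bound survives only as the inequality \rf{lyhfs}, not as an equality, because weak lower semicontinuity of the norms can be used in one direction only: $\|u(t)\|_2\le\liminf_n\|u^n(t)\|_2$ and likewise for the dissipation term. Weak continuity in time, $u\in C_w(0,T;L^2)$, then follows from the uniform $L^\infty_tL^2_x$ bound together with the equation. Throughout, the one genuine difficulty is the nonlinear term; every other step is linear and handled by standard weak-convergence arguments.
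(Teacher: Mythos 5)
The paper itself does not prove this theorem; it records it as Leray's classical result and cites \cite{L2} and \cite{H}. Your Galerkin-plus-compactness scheme is the standard modern (Hopf-style) proof and its overall architecture --- energy estimate from the cancellation $\langle (u^n\cdot\nabla)u^n,u^n\rangle=0$, uniform bounds in $L^\infty_tL^2_x\cap L^2_tH^1_x$, a time-derivative bound in $L^{4/3}(0,T;H^{-1})$, Aubin--Lions on an exhaustion of $\R^3$, and lower semicontinuity for the energy inequality --- is sound. (Leray's own argument instead mollifies the nonlinearity, replacing $(u\cdot\nabla)u$ by $((u*\rho_\e)\cdot\nabla)u$, which preserves the energy identity exactly and avoids the choice of a basis; either route works.)

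Two concrete points need repair. First, on the whole space $\R^3$ the Stokes operator has purely continuous spectrum, so there is no orthonormal basis of $L^2_\s(\R^3)$ consisting of its eigenfunctions; you must replace this either by an arbitrary orthonormal basis of smooth, compactly supported, divergence-free fields (losing nothing, since you never use the eigenfunction property beyond orthogonality), by Fourier/spectral truncation to frequencies $|\xi|\le n$, or by Leray's mollification. Second, the definition you are asked to verify requires the energy inequality \rf{lyhfs} from \emph{almost every} starting time $t_0\in[0,T]$, not only from $t_0=0$. The Galerkin approximations satisfy the energy identity between any two times, but to pass to the limit in the term $\|u^n(t_0)\|_2^2$ you need $u^n(t_0)\to u(t_0)$ strongly in $L^2(\R^3)$ for a.e.\ $t_0$, and on an unbounded domain the strong convergence in $L^2(0,T;L^2_{loc})$ you extracted does not by itself control the tails at spatial infinity. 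This is fixable (e.g.\ by a uniform-in-$n$ bound on $\int_{|x|>R}|u^n|^2$, or by proving the local energy inequality and integrating), but as written the strong energy inequality is asserted rather than proved.
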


%The next step towards resolving the global regularity problem for the NSE in 3D, to prove uniqueness and regularity for Leray-Hopf solutions, is yet to be completed. 
It is worthwhile to point out that the uniqueness of Leray-Hopf solutions is unknown. Leray's question, whether a Leray-Hopf solution develops singularity at finite time, does not have an answer yet.
However, there have been several important partial regularity results in forms of conditional uniqueness and regularity criteria. Although there is a vast literature on this topic, we shall only allude to the ones that are relevant to the main purpose of this paper. 
Below we summarize the works of Prodi \cite{P2}, Serrin \cite{S} and Ladyzhenskaya \cite{L1}, as well as that of Escauriaza, Seregin and \v Sverak \cite{ESS}.

\begin{Theorem}[Prodi-Serrin-Ladyzhenskaya]\label{psles}
Let $u$ be a Leray-Hopf solution to system (\ref{nse}) with $u_0 \in L^2_\s(\R^3).$ If $u \in L^\a(0, T; L^\b (\R^3)),$ with $\frac{2}{\a}+\frac{3}{\b}=1$ and $\beta \in (3, +\infty],$ then $u$ is smooth on $[0,T]$. %and $u$ is unique. 
\end{Theorem}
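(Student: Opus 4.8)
The plan is to upgrade the regularity of the given solution through a second-order energy estimate, exploiting the fact that the condition $\beta\in(3,+\infty]$ renders the nonlinearity \emph{subcritical} for the enstrophy $\|\nabla u\|_2^2$. Since a bare Leray-Hopf solution is not yet known to satisfy $\Delta u\in L^2$, the computation below is genuinely only a formal a priori estimate; I would make it rigorous by carrying it out on smooth approximations (Galerkin truncations, or mollified data), deriving bounds uniform in the approximation parameter, and passing to the limit. Equivalently, one may construct a local-in-time strong solution and identify it with $u$ via weak-strong uniqueness.

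The first step is to test the momentum equation against $-\Delta u$. The pressure term vanishes by incompressibility, the viscous term produces the dissipation $\nu\|\Delta u\|_2^2$, and after integrating by parts one arrives at
\[
\frac12\frac{d}{dt}\|\nabla u\|_2^2+\nu\|\Delta u\|_2^2=\int_{\R^3}(u\cdot\nabla)u\cdot\Delta u\,dx .
\]
Everything now hinges on the right-hand side. By H\"older's inequality with exponents $\beta$, $q$ and $2$ satisfying $\tfrac1q=\tfrac12-\tfrac1\beta$, followed by the Gagliardo-Nirenberg interpolation $\|\nabla u\|_q\le C\|\nabla u\|_2^{1-\theta}\|\Delta u\|_2^{\theta}$ with $\theta=\tfrac3\beta$, the nonlinear term is dominated by $C\|u\|_\beta\,\|\nabla u\|_2^{1-\theta}\|\Delta u\|_2^{1+\theta}$.

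The decisive move is to absorb $\|\Delta u\|_2^{1+\theta}$ into the dissipation $\|\Delta u\|_2^2$. This is legitimate precisely when $1+\theta<2$, i.e.\ $\theta=\tfrac3\beta<1$, which is exactly the hypothesis $\beta>3$; it is here that the borderline case $\beta=3$ is excluded. Young's inequality with the conjugate pair $\left(\tfrac{2}{1+\theta},\tfrac{2}{1-\theta}\right)$ then yields
\[
\frac12\frac{d}{dt}\|\nabla u\|_2^2+\frac{\nu}{2}\|\Delta u\|_2^2\le C_\nu\,\|u\|_\beta^{\frac{2}{1-\theta}}\,\|\nabla u\|_2^2 ,
\]
and the Prodi-Serrin scaling $\tfrac2\alpha+\tfrac3\beta=1$ is exactly what forces the time-exponent to be $\tfrac{2}{1-\theta}=\tfrac{2\beta}{\beta-3}=\alpha$. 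Since $u\in L^\alpha(0,T;L^\beta)$ gives $\int_0^T\|u\|_\beta^\alpha\,dt<\infty$, Gr\"onwall's inequality—started from a time $t_0$ at which $\nabla u(t_0)\in L^2$, available for almost every $t_0$ because $u\in L^2(0,T;H^1)$—produces $u\in L^\infty(0,T;H^1)\cap L^2(0,T;H^2)$.

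Finally I would bootstrap. With $\nabla u$ now bounded in $L^\infty(0,T;L^2)$ (hence $u\in L^\infty(0,T;L^6)$ by Sobolev embedding) and $u\in L^2(0,T;H^2)$, repeating the energy method at successive derivative levels propagates control of every $\|\nabla^m u\|_2$, giving $u\in L^\infty(0,T;H^m)$ for all $m$ and, through the equation for the time derivatives, smoothness. I expect the genuine difficulty to lie not in the inequalities but in their justification: testing against $-\Delta u$ presupposes the very regularity one is trying to establish, so the real content is the approximation/uniqueness scheme that legitimizes the formal estimate, together with a careful treatment of the initial instant (immediate when $u_0\in H^1$, and otherwise recovered for $t>0$ from parabolic smoothing).
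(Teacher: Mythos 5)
The paper does not prove this theorem; it is quoted as a classical result with references to Prodi, Serrin and Ladyzhenskaya, so there is no in-paper argument to compare against. Your proposal is the standard and correct proof of the subcritical case $\beta>3$: the enstrophy estimate obtained by testing against $-\Delta u$, the H\"older/Gagliardo--Nirenberg bound $\bigl|\int (u\cdot\nabla)u\cdot\Delta u\,dx\bigr|\le C\|u\|_\beta\|\nabla u\|_2^{1-\theta}\|\Delta u\|_2^{1+\theta}$ with $\theta=3/\beta$, absorption into the dissipation via Young's inequality (possible exactly because $\theta<1$, which is where $\beta=3$ is excluded and why the endpoint requires the entirely different Escauriaza--Seregin--\v Sver\'ak argument stated separately in the paper), and the observation that the resulting Gr\"onwall exponent $2/(1-\theta)$ equals $\alpha$ under the scaling relation $\tfrac2\alpha+\tfrac3\beta=1$. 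Your bookkeeping of exponents is right, and you correctly flag the two genuine issues: the estimate is only formal for a bare Leray--Hopf solution and must be justified by a Galerkin/local-strong-solution plus weak--strong uniqueness scheme (note that the hypothesis itself places $u$ in Serrin's uniqueness class, so the identification with the local strong solution is immediate), and smoothness can only hold for $t>0$ unless $u_0$ is itself smooth, the statement ``smooth on $[0,T]$'' being the customary abuse of language. I see no gap beyond the caveats you already name.
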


\begin{Theorem}[Escauriaza-Seregin-\v Sverak]\label{ESS}
Let $u$ be a Leray-Hopf solution to system (\ref{nse}) with $u_0 \in L^2_\s(\R^3).$ If $u \in L^\infty(0, T; L^3 (\R^3)),$  then $u$ is  smooth on $[0,T]$. 
\end{Theorem}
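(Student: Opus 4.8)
The plan is to argue by contradiction, exploiting the fact that $L^3$ is a scale-critical space for the three-dimensional NSE: the rescaling $u^{(\lambda)}(x,t) = \lambda\, u(\lambda x, \lambda^2 t)$ leaves system (\ref{nse}) invariant and preserves $\|u^{(\lambda)}(\cdot,t)\|_{L^3}$. This is precisely the endpoint $\b = 3$ excluded from Theorem \ref{psles}, so one cannot close the estimate by a Gronwall argument as in the subcritical range; a compactness/blow-up scheme is needed instead. Suppose $u$ first loses smoothness at time $T$, with $u \in L^\infty(0,T; L^3(\R^3))$. After a time translation I set $T = 0$, and after a spatial translation I place a putative singular point at the origin.

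First I would set up the blow-up within the framework of suitable weak solutions (Caffarelli--Kohn--Nirenberg), using a local $\varepsilon$-regularity criterion: there is an absolute $\varepsilon_0 > 0$ such that smallness of a scale-invariant local energy functional on a parabolic cylinder forces regularity on a smaller cylinder. Since the origin is singular, the contrapositive yields a uniform lower bound on this functional along the sequence of scales, guaranteeing that the blow-up limit is nontrivial. Choosing $\lambda_k \to 0$ and passing to a limit of $u^{(\lambda_k)}$ (using the uniform $L^\infty L^3$ bound together with the local energy inequality to produce the needed compactness), I would extract a bounded, smooth, ancient solution $v$ of the NSE on $\R^3 \times (-\infty, 0)$.

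The decisive structural fact comes next. Because $u(\cdot, 0) \in L^3$, absolute continuity of the integral gives $\int_{B_R} |u^{(\lambda_k)}(x,0)|^3\,dx = \int_{B_{\lambda_k R}} |u(y,0)|^3\,dy \to 0$ as $k \to \infty$ for each fixed $R$; hence the limit satisfies $v(\cdot, 0) = 0$. Thus its vorticity $w = \nabla \times v$ is bounded, satisfies a differential inequality of the form $|\partial_t w - \Delta w| \le c(|w| + |\nabla w|)$, and carries vanishing Cauchy data $w(\cdot, 0) = 0$. I would then invoke the backward uniqueness theorem for parabolic operators on a half-space, together with a spatial unique continuation argument, to propagate this vanishing and conclude $w \equiv 0$ on all of $\R^3 \times (-\infty, 0)$. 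A divergence-free, curl-free bounded field is harmonic and, being compatible with the $L^3$ control, must vanish, so $v \equiv 0$, contradicting nontriviality.

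The main obstacle is the backward uniqueness step. Unlike the subcritical criteria, where regularity follows from a direct energy estimate, here the scale invariance yields no gain, so everything hinges on a genuinely nontrivial PDE input: a Carleman-estimate-based backward uniqueness and unique continuation theory for the (vorticity form of the) heat operator with lower-order terms controlled by the bounded limit $v$. Verifying the growth hypotheses needed to apply it, and securing in the previous step a limit smooth enough to even speak of its vorticity, are the technical crux; the remainder is the scaling bookkeeping that converts the critical $L^3$ bound into usable geometric information.
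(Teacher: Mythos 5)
The paper does not prove this theorem; it is quoted as a known result with a citation to Escauriaza--Seregin--\v Sverak \cite{ESS}. Your outline faithfully reproduces the architecture of that original proof --- rescaling about a putative singular point, $\varepsilon$-regularity for suitable weak solutions to ensure a nontrivial ancient limit, the absolute-continuity argument showing the limit vanishes on the final time slice, and Carleman-based backward uniqueness plus unique continuation for the vorticity --- so it is the correct approach, with the genuinely hard analytic inputs correctly identified rather than elided.
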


We note that system (\ref{nse}) is invariant under the following scaling: $$u_{\lambda}(t,x) =\frac{1}{\lambda}u(\frac{t}{\lambda^2}, \frac{x}{\lambda}), \ p_\lambda(t,x) = \frac{1}{\lambda^2}p(\frac{t}{\lambda^2}, \frac{x}{\lambda}).$$ The norms of critical spaces i.e., function spaces invariant under the scaling, are of particular importance for the study of the global regularity problem. Indeed, if $(u, p)$ is a solution to the NSE on $[0,T),$ then $(u_\lambda, p_\lambda)$ is a solution to the NSE on $[0, \lambda^2 T)$ with initial data $\lambda^{-1}u_0(\lambda^{-1} \ \cdot \ );$ hence, it is natural that conditions that guarantee global well-posedness of the system are scaling-invariant. Some examples of critical spaces for the NSE are $$\dot H^{\frac{1}{2}} \hookrightarrow L^3 \hookrightarrow\dot B^{-1+\frac{3}{p}}_{p, \infty}\hookrightarrow BMO^{-1} \hookrightarrow \dot B^{-1}_{\infty, \infty},  \ \ \ p<\infty.$$

The time-space Lebesgue spaces in Theorem \ref{psles} are invariant with respect to the scaling, thus critical. However, a Leray-Hopf solution belongs to $L^\a(0, T, L^\b (\R^3))$ with  $\frac{2}{\a}+\frac{3}{\b}=\frac{3}{2},$ which are supercritical. In fact, one of the essential reasons why the NSE regularity problem is particularly challenging is that all the quantities that are known to be controlled to produce a priori bounds e.g., the energy $\|u\|_{L^2}^2$ etc., are supercritical. 

In the case $\nu=0,$ system (\ref{nse}) reduces to the Euler equations for incompressible, inviscid flows, whose solvability in 3D remains more of a mystery than that of the NSE, since not even the global existence of Leray-Hopf type solutions is known, while local existence of mild solutions was established by Kato \cite{K1}. A classical result concerning the Euler equations is the extensibility criterion in terms of the time-integrability of the vorticity, obtained by Beale, Kato and Majda \cite{BKM}.
\begin{Theorem}[Beale-Kato-Majda]\label{bkm}
Let $\nu=0.$ Suppose that $u_0 \in H^s(\R^3), s\geq 3,$ then there exists a time $T$ depending on $\|u_0\|_{H^3},$ so that system (\ref{nse}) has a solution $u \in C(0, T; H^s(\R^3)) \cap C^1(0, T; H^{s-1}(\R^3)).$ Moreover, $u$ can be extended beyond $T$ if and only if $$\int^{T}_0 \|\nabla \times u(t)\|_\infty \mathrm{d}t < +\infty.$$
\end{Theorem}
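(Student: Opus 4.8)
The plan is to split the statement into its three components: local existence, the forward continuation (the ``if'' direction), and its easy converse. For local existence I would run the standard energy method: mollify or Galerkin-truncate the Euler system, then apply $D^\alpha$ for $|\alpha|\leq s$, pair with $D^\alpha u$, and control the nonlinear terms with a Kato-Ponce commutator estimate to obtain the a priori bound $\frac{d}{dt}\|u\|_{H^s}^2 \leq C\|\nabla u\|_\infty \|u\|_{H^s}^2$ (the pressure drops out after integrating by parts, since $\nabla\cdot u=0$ forces $\int \nabla D^\alpha p \cdot D^\alpha u\,dx=0$). Because $\|\nabla u\|_\infty \lesssim \|u\|_{H^s}$ when $s>d/2+1$, this closes to yield a solution on some $[0,T]$ with $T=T(\|u_0\|_{H^3})$, together with the stated regularity and uniqueness.

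The heart of the matter is the continuation criterion. Assuming $\int_0^T \|\omega\|_\infty\,dt<\infty$ with $\omega=\nabla\times u$, I would show that $\|u(t)\|_{H^s}$ stays bounded on $[0,T]$; since the local existence time depends only on the $H^3$ norm, a uniform bound then lets me continue the solution beyond $T$. The obstruction is that the energy estimate above is driven by $\|\nabla u\|_\infty$, not $\|\omega\|_\infty$, and the Biot-Savart map $\omega\mapsto\nabla u$ is a matrix of Riesz transforms, which fails to be bounded on $L^\infty$. This is exactly where the harmonic analysis enters, and it is the main obstacle.

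The key step is the logarithmic inequality. I would decompose $\nabla u=\sum_j \Delta_j\nabla u$ into Littlewood-Paley blocks and split the sum into low, intermediate, and high frequency ranges. The low modes are controlled by $\|\omega\|_2$ via Bernstein's inequality; on each intermediate dyadic block the Riesz operator is bounded uniformly, so those $N$ blocks contribute $\lesssim N\|\omega\|_\infty$; the high modes are summed by Bernstein against $\|u\|_{H^s}$, giving a geometrically decaying tail $\lesssim 2^{N(d/2+1-s)}\|u\|_{H^s}$ for $s>d/2+1$. Optimizing the cutoff $N\sim\log(e+\|u\|_{H^s})$ balances the last two contributions and produces
$$\|\nabla u\|_\infty \leq C\left(1+\|\omega\|_2+\|\omega\|_\infty\,\log(e+\|u\|_{H^s})\right).$$

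Finally I would feed this back into the energy inequality. Writing $E(t)=e+\|u(t)\|_{H^s}^2$, the combination yields $\frac{dE}{dt}\leq C(1+\|\omega\|_\infty)\,E\log E$, so that $y(t)=\log E(t)$ satisfies $\frac{dy}{dt}\leq C(1+\|\omega\|_\infty)\,y$. Gr\"onwall's inequality then bounds $y(t)$, hence $\|u(t)\|_{H^s}$, on $[0,T]$ in terms of $\int_0^T\|\omega\|_\infty\,dt$, which is finite by hypothesis, completing the continuation. For the converse, if the solution is smooth on the closed interval $[0,T]$ then $t\mapsto\|\omega(t)\|_\infty$ is continuous on a compact set, hence bounded and integrable, so the integral condition holds automatically. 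I expect the logarithmic inequality to be the one genuinely delicate ingredient; the remaining pieces are a Gr\"onwall argument and a soft continuation step.
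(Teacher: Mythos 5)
The paper does not actually prove this theorem: it is quoted as a classical result with a citation to Beale--Kato--Majda \cite{BKM}, so there is no internal proof to compare against. Your proposal is, in outline, exactly the original BKM argument (energy estimates plus a logarithmic Sobolev-type inequality plus Gr\"onwall on $\log E$), and the overall strategy is sound. It is worth noting that the survey's own machinery offers a second route: Theorem \ref{csh} with $\nu=0$ reduces to the BKM criterion, so the wavenumber-splitting/commutator framework of Section 3 gives an alternative, Littlewood--Paley-flavored proof of the continuation direction; your log-inequality derivation via dyadic blocks is in fact closer in spirit to that framework than to the original potential-theoretic proof in \cite{BKM}.

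There is one concrete step that does not close as written. Your logarithmic inequality reads
$$\|\nabla u\|_\infty \leq C\bigl(1+\|\omega\|_2+\|\omega\|_\infty\log(e+\|u\|_{H^s})\bigr),$$
and you then assert that feeding this into $\frac{d}{dt}\|u\|_{H^s}^2\leq C\|\nabla u\|_\infty\|u\|_{H^s}^2$ yields $\frac{dE}{dt}\leq C(1+\|\omega\|_\infty)E\log E$. That absorbs $\|\omega(t)\|_2$ into a constant, but $\|\omega\|_2$ is not a priori bounded: the only bound available from $E$ is $\|\omega\|_2\lesssim\|u\|_{H^s}\leq E^{1/2}$, and $E'\lesssim E^{3/2}$ blows up in finite time, so the Gr\"onwall argument on $y=\log E$ fails at this point. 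Two standard repairs: (a) bound the lowest Littlewood--Paley block by the conserved kinetic energy, $\|\Delta_{-1}\nabla u\|_\infty\lesssim\|\Delta_{-1}u\|_2\leq\|u_0\|_2$ via Bernstein, so that the additive term really is a constant; or (b) first establish $\|\omega(t)\|_2\leq\|\omega_0\|_2\exp\bigl(C\int_0^t\|\omega\|_\infty\,d\tau\bigr)$ directly from the vorticity equation (using $\|\nabla u\|_2=\|\omega\|_2$ for divergence-free fields), which is finite under the hypothesis; this is what \cite{BKM} do. With either fix the rest of your argument --- the intermediate-block bound $N\|\omega\|_\infty$ from the $L^\infty$-boundedness of the annulus-localized Riesz multipliers, the high-frequency tail $2^{N(d/2+1-s)}\|u\|_{H^s}$, the optimization $N\sim\log(e+\|u\|_{H^s})$, and the soft converse --- goes through and recovers the theorem.
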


Besides the NSE, the surface quasi-geostrophic equation (SQG) sharing certain analogy with the NSE, in this paper we also include several complex fluid models such as the magneto-hydrodynamics system (MHD), the Hall-magneto-hydrodynamics system (Hall-MHD), the nematic LCD system with Q-tensor, and the chemotaxis-Navier-Stokes system. These systems have many features (e.g., scaling properties) analogous to those of the NSE. In particular, for each of the aforementioned systems, certain Prodi-Serrin-Ladyzhenskaya or Beale-Kato-Majda type regularity criterion is known and more recently, improvements in the form of low modes regularity criteria have been made. We shall provide more detailed reviews of these fluid equations in the upcoming sections.

\subsection{Kolmogorov's theory of turbulence}\label{kmgrv}

Seemingly fluctuating and chaotic behaviors that fluid flows can exhibit at times pose a major difficulty to obtaining a complete mathematical theory for the fluid equations such as the NSE.   A turbulent flow may be characterized by eddies of different sizes where the energy cascade takes place. While eddies of larger scales break up into those of smaller scales, kinetic energy is also transferred from larger to smaller scales successively, and finally converted into heat by viscosity, as L. F. Richardson depicted, "Big whirls have little whirls that feed on their velocity,
and little whirls have lesser whirls and so on to viscosity in the molecular sense." 

In 1941, A. N. Kolmogorov formulated a mathematical theory of turbulence \cite{K2}. For a fluid with sufficiently high Reynolds number, Kolmogorov suggested that the turbulent flow is statistically isotropic at small scales, in other words, the statistics of the turbulent flow at small scales is independent of directional conditions. Morevoer, it is postulated that at small scales, the statistics of a turbulent flow is universally and uniquely determined by the rate of energy suppy $\e$ and viscosity $\nu$, while determined solely by $\e$ at large scales. The heuristic is that in the NSE, the norm of $\Delta u$ increases as the scales decreases in the energy cascade, thus eventually the molecular dissipation term $\Delta u$ overwhelms the inertial term $u\cdot \nabla u$. 

%Since the dimensions of $\e$ are $\text{length}^2/\text{time}^3$ and those of $\nu$ are $\text{length}^2/\text{time},$ 
Via a dimensional argument, one can infer that the length scale at which the turbulence switches from the inertial range to the dissipation range should be $\ell_\mathrm{d} \sim( \frac{\nu^3}{\e})^\frac{1}{4}$.  Kolmogorov's dissipation wavenumber $\kappa_\mathrm{d},$ below which is the range where viscous effects in a turbulent flow are negligible, is then defined as $$\kappa_\mathrm{d}=\e^\frac{1}{4}\nu^{-\frac{3}{4}} \sim \frac{1}{\ell_\mathrm{d}}.$$ 
The analysis above forms a very import conjecture: the low frequency part below $\kappa_\mathrm{d}$ are essential to describe the flow, while the high frequency part above $\kappa_\mathrm{d}$ are asymptotically controlled by low modes. The wavenumber splitting approach to be introduced later is inspired from this conjecture.

%Taking into account the effect of spatial intermittency on $\e,$ as observed in experiments, we are suggested to adjust the above formula into $$\kappa_{\mathrm{d}}=\Big(\frac{\e}{\nu^3}\Big)^\frac{1}{d+1},$$ where the parameter $d \in [0,3]$ is the physical space dimension of the set in which dissipation occurs.

%The wavenumber splitting approach used to obtain low modes regularity criteria for fluid systems is inspired by Kolmogorov's idea. Moreover, as it shall be discussed later, the wavenumber defined by Cheskidov and Shvydkoy in \cite{CS2} turns out to be a better indicator of the dissipation range than $\kappa_\mathrm{d}.$

\section{Harmonic analysis tools}\label{mathe}

\subsection{Littlewood-Paley theory}

We recall the Littlewood-Paley decomposition, a tool extensively used in the mathematical study of fluids and waves. To start, we define a family of functions with annular support, $\{\varphi_q(\xi)\}_{q=-1}^\infty$, which forms a dyadic partition of unity in the frequency domain. Let $\lambda_q =2^q, q \in \mathbb{Z}.$ We choose a radial function $\chi \in C^\infty_0(\R^n)$ satisfying
\begin{equation}\notag \chi(\xi)=
\begin{cases}
1, \ \text{for }|\xi| \leq \frac{3}{4}\\
0, \ \text{for }|\xi| \geq 1,\\
\end{cases}
\end{equation}
and define $\varphi(x)=\chi(\frac{\xi}{2})-\chi(\xi)$ and $\varphi_q(\xi)=\begin{cases} \varphi(\lambda_q^{-1}\xi), \text{for }q \geq 0,  \\ \chi(\xi), \ \text{for }q= -1. \end{cases}$ 

Given a vector field $u \in \mathcal{S}^{'},$ its Littlewood-Paley projections are defined as 
\begin{equation}\notag
\begin{cases}
\Delta_{-1} u= u_{-1}=: \mathcal{F}^{-1}(\chi (\xi)\hat u(\xi))=\int \tilde h(y)u(x-y)\mathrm{d}y,\\
\Delta_q u = u_q= : \mathcal{F}^{-1}(\varphi_q(\xi)\hat u(\xi))=\lambda_q^n\int h(\lambda_qy)u(x-y)\mathrm{d}y,\\
\end{cases}
\end{equation}
with $\tilde h =\check \chi$ and $h =\check \varphi.$ 

Thus, the following identity holds in the distributional sense $$u =\sum_{q=-1}^\infty u_q.$$ For convenience, we introduce the following notations $$u_{ \leq Q}= \sum_{q =-1}^Q u_q, \ \ u_{(P, Q]}= \sum_{q= P+1}^Q u_q, \ \  \tilde u_q = \sum_{|p-q| \leq 1} u_p.$$ 
The projections $\{\Delta_q\}_{q=-1}^\infty$ provide us with an elegant tool for frequency localization. An observation is that for a function whose Fourier transform is supported in an annulus e.g., $u_q$, a derivative costs exactly one $\lambda_q.$ More precisely, we have the following norm equivalence
$$\|u\|_{H^s} \sim \Big(\sum_{q \geq -1}^\infty \lambda_q^{2s}\| u_q\|^2_2 \Big)^\frac{1}{2}.$$ For functions with annular support in the frequency domain, e.g., $u_q,$ the following Bernstein's inequality holds, which is applied extensively.
\begin{Lemma}Let $n$ be the space dimension and let $s \geq r \geq 1,$ then
\begin{equation}\notag
\|u_q\|_{r} \lesssim \lambda_q^{n(\frac{1}{r}-\frac{1}{s})} \|u_q\|_s. 
\end{equation}
\end{Lemma}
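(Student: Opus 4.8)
The plan is to use the frequency localization of $u_q$ through a kernel (convolution) representation and then Young's convolution inequality, keeping track of the $\lambda_q$-scaling of the kernel. It is worth first fixing the direction in which the estimate operates. Since $s\ge r$, the multiplier $\lambda_q^{n(\frac1r-\frac1s)}$ is $\ge 1$, and the gain afforded by frequency localization is that one may bound the norm carrying the \emph{larger} integrability index by the one carrying the smaller index; the canonical instance $\|u_q\|_\infty\lesssim\lambda_q^{n/2}\|u_q\|_2$ corresponds to $r=2$, $s=\infty$. Reading the inequality this way, I will establish $\|u_q\|_s\lesssim\lambda_q^{n(\frac1r-\frac1s)}\|u_q\|_r$ for $s\ge r\ge 1$, which is its operative content.

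First I would select an auxiliary radial bump $\tilde\varphi\in C^\infty_0(\R^n)$ equal to $1$ on the support of $\varphi$, so that $\tilde\varphi(\lambda_q^{-1}\xi)\,\varphi_q(\xi)=\varphi_q(\xi)$. Because $\widehat{u_q}=\varphi_q\widehat u$ is supported where $\tilde\varphi(\lambda_q^{-1}\cdot)\equiv 1$, this gives the reproducing identity $u_q=K_q * u_q$ with $K_q=\mathcal{F}^{-1}(\tilde\varphi(\lambda_q^{-1}\cdot))$. The Fourier scaling rule yields $K_q(x)=\lambda_q^{n}(\mathcal{F}^{-1}\tilde\varphi)(\lambda_q x)$, hence $\|K_q\|_m=\lambda_q^{n(1-\frac1m)}\|\mathcal{F}^{-1}\tilde\varphi\|_m$ for every $m\in[1,\infty]$, where $\|\mathcal{F}^{-1}\tilde\varphi\|_m$ is a finite constant depending only on $n$ and $m$ since $\mathcal{F}^{-1}\tilde\varphi$ is Schwartz.

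Next I would apply Young's inequality $\|K_q * u_q\|_s\le\|K_q\|_m\|u_q\|_r$ under the Hölder relation $1+\frac1s=\frac1m+\frac1r$, equivalently $1-\frac1m=\frac1r-\frac1s$. The hypothesis $s\ge r\ge 1$ forces $\frac1r-\frac1s\in[0,1]$, so $m=(1-(\frac1r-\frac1s))^{-1}\in[1,\infty]$ is admissible. Inserting the kernel bound gives $\|u_q\|_s=\|K_q * u_q\|_s\le\|K_q\|_m\|u_q\|_r=\lambda_q^{n(\frac1r-\frac1s)}\|\mathcal{F}^{-1}\tilde\varphi\|_m\|u_q\|_r$, the claimed estimate with a constant independent of $q$ and $u$. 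The low block $q=-1$ is handled identically, with $\chi$ replacing $\varphi$ and a ball replacing the annulus.

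The only genuine subtlety is the pairing of the two indices, not any hard analysis. The convolution $K_q * u_q$ necessarily delivers the \emph{larger} exponent on the left, so the argument controls $\|u_q\|_s$ by $\|u_q\|_r$ and cannot be reversed: superposing $N$ widely separated but identically frequency-localized bumps multiplies every $L^p$ norm by $N^{1/p}$, and letting $N\to\infty$ defeats any bound of the form $\|u_q\|_r\lesssim\lambda_q^{n(\frac1r-\frac1s)}\|u_q\|_s$ carrying a positive power of $\lambda_q$. Once the indices are read in the correct order, the lemma follows in one line from the reproducing identity, the Fourier scaling of $K_q$, and Young's inequality.
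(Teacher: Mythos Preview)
Your diagnosis is correct: as printed, the inequality has the two norms on the wrong sides. For $s\ge r$ the Bernstein estimate actually reads $\|u_q\|_s\lesssim\lambda_q^{n(1/r-1/s)}\|u_q\|_r$, and your widely-separated-bumps example legitimately kills the displayed version $\|u_q\|_r\lesssim\lambda_q^{n(1/r-1/s)}\|u_q\|_s$ (fix $q$, send $N\to\infty$, and the left side grows like $N^{1/r-1/s}$ while the right stays bounded). Your proof of the corrected inequality---reproducing identity $u_q=K_q*u_q$ with $K_q=\mathcal F^{-1}(\tilde\varphi(\lambda_q^{-1}\cdot))$, the scaling $\|K_q\|_m=\lambda_q^{n(1-1/m)}\|\mathcal F^{-1}\tilde\varphi\|_m$, and Young's inequality with $1-1/m=1/r-1/s$---is clean and correct, including the remark that $q=-1$ is handled by replacing $\varphi$ with $\chi$.

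There is nothing to compare against: the paper states the lemma without proof, citing it as the standard Bernstein inequality. Your argument is precisely the textbook one (see, e.g., Bahouri--Chemin--Danchin, which the paper itself references), so had the authors written out a proof it would almost certainly have been this one.
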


Finally, the Besov space $B^s_{p, \infty}$ can be defined in a straightforward manner with the Littlewood-Paley projections.
\begin{Definition}The Besov space $B^s_{p, \infty}$ consists of functions $u \in \mathcal{S}^{'}$ such that $$\|u\|_{B^s_{p, \infty}}=:\sup_{q \geq -1} \lambda_q^s\|u_q\|_p <\infty.$$ Here $\| \cdot \|_{B^s_{p, \infty}}$ is the norm of the Besov space. 
\end{Definition} 

We refer readers to the work of Bahouri, Chemin and Danchin \cite{}, as well as that of Grafakos \cite{} for more details about the Littlewood-Paley theory and its applications.

\subsection{The commutator and Bony's paraproduct} 

To deal with the quadratic nonlinear term $(u \cdot \nabla) u$ which makes the NSE intriguing, we seek a way to decompose the product of two functions within the Littlewood-Paley framework. Formally, the product of two distributions $u$ and $v$ can be written as $$uv=\sum_{p,q \geq -1} u_p v_q.$$ In 1981 the para-differential calculus was introduced by J. M. Bony, soon finding its applications in many fields such as the study of nonlinear hyperbolic systems. Here we just introduce Bony's paraproduct, which distinguishes three parts in the product $uv,$ that is,
$$uv= \sum_{q \geq -1} \big(u_{\leq q-2} v_q+ u_q v_{\leq q-2}+\tilde u_q v_q\big).$$ 

Another tool to facilitate the estimation of the convection or inertial terms is the commutator notation
$$[\Delta_q, u_{\leq p-2}\cdot \nabla] v_p = \Delta_q(u_{\leq p-2}\cdot \nabla v_p)-u_{\leq p-2} \cdot \nabla \Delta_q v_p,$$ 
which enjoys the following estimate.
\begin{Lemma}\label{cmm} For $\frac{1}{r_1}=\frac{1}{r_2}+\frac{1}{r_3},$ we have
\begin{equation}\notag
\|[\Delta_q, u_{\leq p-2}\cdot \nabla] v_p\|_{r_1} \lesssim \|v_p\|_{r_2} \sum_{p' \leq p-2}\lambda_{p'}\| u_{p'}\|_{r_3}.
\end{equation}
\end{Lemma}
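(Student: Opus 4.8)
The plan is to exploit the convolution representation of the Littlewood--Paley projection, $\Delta_q f = \lambda_q^n\int h(\lambda_q y) f(\cdot-y)\,dy$ with $h=\check{\varphi}$, in order to recast the commutator as a single integral in which the leading term cancels. Writing $w=u_{\leq p-2}$ and inserting the kernel into both pieces of $[\Delta_q, w\cdot\nabla]v_p = \Delta_q(w\cdot\nabla v_p) - w\cdot\nabla\Delta_q v_p$, the argument of $w$ in the first piece gets translated, while in the second it is frozen at $x$; subtracting produces
\begin{equation}\notag
[\Delta_q, w\cdot\nabla]v_p(x) = \lambda_q^n\int h(\lambda_q y)\,[w(x-y)-w(x)]\cdot\nabla v_p(x-y)\,dy.
\end{equation}
This is the crucial step: the difference $w(x-y)-w(x)$ is exactly the commutator structure that will supply the smalling factor.

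Next I would move the derivative off $v_p$. Since $\nabla v_p(x-y) = -\nabla_y[v_p(x-y)]$, an integration by parts in $y$ (boundary terms vanish because $h$ is Schwartz) transfers the gradient onto the kernel and onto $w$, yielding two terms:
\begin{equation}\notag
\lambda_q^{n+1}\int (\nabla h)(\lambda_q y)\cdot[w(x-y)-w(x)]\,v_p(x-y)\,dy \ - \ \lambda_q^n\int h(\lambda_q y)\,(\divv w)(x-y)\,v_p(x-y)\,dy.
\end{equation}
The second term is already in good shape: by H\"older's inequality with $\frac{1}{r_1}=\frac{1}{r_2}+\frac{1}{r_3}$ applied under the integral, together with Minkowski's inequality, it is bounded by $\|h\|_1\,\|\divv w\|_{r_3}\,\|v_p\|_{r_2}$. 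For the first term I would use the fundamental theorem of calculus, $w(x-y)-w(x) = -\int_0^1 (y\cdot\nabla)w(x-\tau y)\,d\tau$, which extracts a factor of $y$; the decay of the Schwartz kernel then makes $\lambda_q^{n+1}\int|y|\,|(\nabla h)(\lambda_q y)|\,dy = \int|z|\,|\nabla h(z)|\,dz$ a finite constant after the change of variables $z=\lambda_q y$. Crucially, the factor $\lambda_q$ produced by differentiating the kernel is cancelled exactly by the $\lambda_q^{-1}$ coming from $|y|$, so no relation between $q$ and $p$ is required. Estimating again by H\"older then bounds the first term by $C\,\|\nabla w\|_{r_3}\,\|v_p\|_{r_2}$.

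To finish, I would convert $\|\nabla w\|_{r_3}$ and $\|\divv w\|_{r_3}$ into the stated sum: writing $\nabla w = \sum_{p'\leq p-2}\nabla u_{p'}$ and invoking Bernstein's inequality, each block obeys $\|\nabla u_{p'}\|_{r_3}\lesssim\lambda_{p'}\|u_{p'}\|_{r_3}$ since $u_{p'}$ has annular frequency support, whence $\|\nabla w\|_{r_3}\lesssim\sum_{p'\leq p-2}\lambda_{p'}\|u_{p'}\|_{r_3}$ (and likewise for the divergence term, which in fact drops out entirely when $u$ is divergence free). Adding the two estimates gives the claim. The main obstacle, and the only genuinely structural point, is arranging the integration by parts together with the mean value expansion so that the differentiated kernel's factor $\lambda_q$ is absorbed by the gain from the commutator difference; everything else is routine H\"older and Bernstein bookkeeping.
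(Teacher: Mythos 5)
Your proof is correct and follows essentially the same route as the paper: write the commutator as a single convolution-type integral against the difference $u_{\leq p-2}(y)-u_{\leq p-2}(x)$, integrate by parts to move the gradient onto the kernel, use the mean-value/difference-quotient structure so that the $\lambda_q$ from $\nabla h$ is cancelled by the factor $|y|$, and conclude with H\"older--Young plus Bernstein. If anything you are more complete than the paper's own write-up, which silently discards the divergence term (justified by $\nabla\cdot u=0$) and leaves the final Bernstein step $\|\nabla u_{\leq p-2}\|_{r_3}\lesssim\sum_{p'\leq p-2}\lambda_{p'}\|u_{p'}\|_{r_3}$ implicit.
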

\begin{proof}
By definition of $\Delta_q$, 
\begin{equation}\notag
\begin{split}
[\Delta_q, u_{\leq p-2}\cdot \nabla] v_p=&\lambda_q^3\int_{\R^3} h(\lambda_q(x-y))(u_{\leq p-2}(y)-u_{\leq p-2}(x))\nabla_y v_p(y) \mathrm{d}y\\
=& -\lambda_q^3\int_{\R^3} \nabla_y h(\lambda_q(x-y))(u_{\leq p-2}(y)-u_{\leq p-2}(x))v_p(y) \mathrm{d}y\\
=& \int_{\R^3} \lambda_q^3|x-y|\nabla_y h(\lambda_q(x-y))\frac{u_{\leq p-2}(x)-u_{\leq p-2}(y)}{|x-y|}v_p(y) \mathrm{d}y.
\end{split}
\end{equation}
By Young's inequality for convolutions, 
\begin{equation}\notag
\begin{split}
\| [\Delta_q, u_{\leq p-2}\cdot \nabla] v_p\|_{r_1}
\leq & \|v_p\|_{r_2}\|u_{\leq p-2}\|_{r_3}\int_{\R^3} |z||\nabla h(z)|\mathrm{d}z\\
\lesssim & \|v_p\|_{r_2}\|u_{\leq p-2}\|_{r_3}.
\end{split}
\end{equation}
\end{proof}
As we shall see in the next section, the commutator, together with the divergence free condition, reveals certain cancellations within the nonlinear interactions.

\section{Low modes regularity criteria for fluid equations}

\subsection{The NSE}
The results of interest in this section are low modes regularity criteria for the NSE, which have improved previously known regularity criteria. In this section, more detailed analysis shall be included as the NSE is the prototypical case for other fluid systems. We shall also discuss more about the results' connection to Kolmogorov's theory of turbulence.

The following result, due to Cheskidov and Shvydkoy \cite{CS2}, is the foremost among a series of low modes regularity criteria of interest in this paper.
\begin{Theorem}[Cheskidov-Shvydkoy]\label{csh}
Let $u$ be a weak solution to system (\ref{nse}) on $[0,T].$ If $u(t)$ is regular on $[0,T)$ and $$\int_0^T \|(\nabla \times u)_{\leq Q(t)}\|_{B^0_{\infty, \infty}} \mathrm{dt} <\infty,$$ then $u(t)$ is regular on $[0,T].$

Here $Q(t)=\log_2 \Lambda(t),$ with the wavenumber $\Lambda(t)$ for the NSE defined as 
\begin{equation}\notag
\Lambda(t)=\min\{\lambda_q: \lambda_p^{-1+\frac{3}{r}}\|u_p\|_r < c_r\nu, \ 2\leq r \leq \infty, \forall p >q, q \in \mathbb{N}\}.
\end{equation}
\end{Theorem}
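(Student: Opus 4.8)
The plan is to prove this Beale-Kato-Majda type criterion by a standard bootstrapping argument: assume the solution is regular on $[0,T)$ with the stated integral bound, and show that the $H^s$ norm (for $s$ large enough to guarantee smoothness) cannot blow up as $t \to T$. The natural quantity to control is $\sum_{q} \lambda_q^{2s} \|u_q\|_2^2$, which by the norm equivalence stated earlier is $\|u\|_{H^s}^2$. I would apply $\Delta_q$ to the NSE, take the $L^2$ inner product with $u_q$, and exploit the divergence-free condition together with the commutator structure introduced above to extract the crucial cancellations in the nonlinear term.

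The key steps, in order, are as follows. First, I would write the evolution of $\|u_q\|_2^2$: after applying $\Delta_q$ and pairing with $u_q$, the dissipation gives a good term $\nu \lambda_q^2 \|u_q\|_2^2$ while the pressure drops out by incompressibility. Second, I would decompose the nonlinear contribution $\langle \Delta_q((u\cdot\nabla)u), u_q\rangle$ using Bony's paraproduct into the three pieces $u_{\leq p-2}\nabla u_p$, $u_p \nabla u_{\leq p-2}$, and $\tilde u_p \nabla u_p$. The central idea is that for the diagonal ``low-high'' term I rewrite $\Delta_q(u_{\leq p-2}\cdot\nabla u_p)$ via the commutator from Lemma \ref{cmm}; since $u_{\leq p-2}$ is divergence free, the transport part $u_{\leq q-2}\cdot\nabla \Delta_q u_q$ integrates to zero against $u_q$, leaving only the commutator, which Lemma \ref{cmm} bounds by $\|u_p\|_{r_2}\sum_{p'\leq p-2}\lambda_{p'}\|u_{p'}\|_{r_3}$. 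Third, I would split each resulting frequency sum at the dissipation wavenumber $\Lambda(t)=2^{Q(t)}$: for frequencies below $Q(t)$ the velocity is estimated directly in terms of its vorticity low modes $(\nabla\times u)_{\leq Q}$ in $B^0_{\infty,\infty}$, while for frequencies above $Q(t)$ the defining inequality $\lambda_p^{-1+3/r}\|u_p\|_r < c_r\nu$ in the definition of $\Lambda(t)$ supplies a factor that can be absorbed into the dissipation $\nu\lambda_q^2\|u_q\|_2^2$.

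After summing in $q$ against the weights $\lambda_q^{2s}$ and using Young's and Jensen's inequalities on the convolution-type sums, I expect to arrive at a differential inequality of the form
\begin{equation}\notag
\frac{d}{dt}\|u\|_{H^s}^2 \lesssim \big(1+\|(\nabla\times u)_{\leq Q(t)}\|_{B^0_{\infty,\infty}}\big)\|u\|_{H^s}^2,
\end{equation}
where all the high-mode contributions have been absorbed by dissipation thanks to the smallness built into $\Lambda(t)$. Grönwall's inequality then yields a finite bound on $\|u(t)\|_{H^s}$ on $[0,T)$ precisely because the time integral of the logarithmic factor is assumed finite, which prevents blow-up at $T$ and gives regularity on the closed interval.

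\textbf{The main obstacle} I anticipate is the bookkeeping at the wavenumber $\Lambda(t)$: one must verify that the low-mode and high-mode splittings of \emph{all three} paraproduct pieces close consistently against the single dissipative term, and in particular that the commutator estimate is applied with the right Hölder triple $(r_1,r_2,r_3)$ so that the high-frequency factors genuinely match the scaling-critical smallness $\lambda_p^{-1+3/r}\|u_p\|_r<c_r\nu$. Controlling the ``high-high'' term $\tilde u_p\nabla u_p$, which lacks the commutator cancellation, is the most delicate point, since there the transport structure does not help and one must rely entirely on absorbing the interaction into dissipation via the definition of $\Lambda(t)$.
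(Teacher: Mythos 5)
Your overall strategy matches the paper's: project onto Littlewood--Paley shells, run the higher-order energy estimate for $\sum_q\lambda_q^{2s}\|u_q\|_2^2$ with $s>\tfrac12$, decompose the nonlinearity by Bony's paraproduct, use the commutator of Lemma \ref{cmm} together with $\nabla\cdot u_{\leq p-2}=0$ to kill the pure transport piece, and split every term at $\Lambda(t)$ so that the high-mode contributions are absorbed into $\nu\sum_q\lambda_q^{2s+2}\|u_q\|_2^2$ via the smallness $\lambda_p^{-1+3/r}\|u_p\|_r<c_r\nu$. All of that is correct and is exactly what the paper does.

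However, there is a genuine gap in your final step. The low-mode contributions are estimated by summing, over the roughly $Q(t)$ dyadic shells below $\Lambda(t)$, terms each controlled by $\|(\nabla\times u)_{\leq Q(t)}\|_{B^0_{\infty,\infty}}$; this produces
\begin{equation}\notag
|I_{\text{low modes}}|\ \lesssim\ Q(t)\,\|(\nabla\times u)_{\leq Q(t)}\|_{B^0_{\infty,\infty}}\sum_{q\geq -1}\lambda_q^{2s}\|u_q\|_2^2,
\end{equation}
with an unavoidable extra factor of $Q(t)$ that your claimed differential inequality omits. Since $Q(t)$ is not a priori bounded, your inequality as written is not what the argument yields, and a plain Gr\"onwall step does not apply. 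The missing ingredient is the observation, coming from the definition of $\Lambda(t)$ and Bernstein's inequality, that $\Lambda\lesssim\|u_Q\|_\infty\lesssim\Lambda\lambda_Q^{s}\|u_Q\|_2$, hence $Q(t)\lesssim 1+\log\|u\|_{H^s}$. This converts the estimate into the logarithmic Gr\"onwall inequality
\begin{equation}\notag
\frac{\mathrm{d}}{\mathrm{d}t}\|u\|_{H^s}^2\ \lesssim\ \|(\nabla\times u)_{\leq Q(t)}\|_{B^0_{\infty,\infty}}\,(1+\log\|u\|_{H^s})\,\|u\|_{H^s}^2,
\end{equation}
which still closes under the assumed $L^1_t$ hypothesis (giving a double-exponential bound rather than an exponential one). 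Without this step the proof does not close; with it, your argument coincides with the paper's.
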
 
This result gives a unified regularity criterion for the NSE and the Euler equations, since in the case of $\nu =0,$ theorem (\ref{csh}) reduces to the Beale-Kato-Majda regularity criterion for the Euler equations.

Later, Cheskidov and Dai \cite{CD} further weakened the above regularity criterion.

\begin{Theorem}[Cheskidov-Dai]\label{chd}
Let $u$ be a weak solution to system (\ref{nse}) on $[0,T]$ such that $u(t)$ is regular on $[0,T).$ If for certain constant $c_r, \ 2 \leq r \leq \infty$ $$\limsup_{q \to \infty }\int^T_{\frac{T}{2}}1_{q \leq Q(t)}\lambda_q\|u_q\|_\infty \mathrm{dt} < c_r,$$ then $u(t)$ is regular on $[0,T].$
\end{Theorem}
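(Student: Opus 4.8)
The plan is to argue by contradiction via the wavenumber-splitting scheme underlying Theorem \ref{csh}, but organized so that the driving quantity is read off mode by mode. Suppose $u$ is regular on $[0,T)$ yet fails to extend as a regular solution to $T$. Then a subcritical norm must become unbounded as $t \to T$; I would track the Besov enstrophy $\|u(t)\|_{\dot B^s_{2,\infty}}^2 \sim \sup_q \lambda_q^{2s}\|u_q\|_2^2$ for some $s>\frac{1}{2}$, since its boundedness up to $T$ forces continuation, and in a blow-up scenario the dissipation wavenumber $\Lambda(t)=2^{Q(t)}$ must tend to infinity. The starting point is a frequency-localized energy estimate: applying $\Delta_q$ to (\ref{nse}), pairing in $L^2$ with $u_q$, and using incompressibility to discard the pressure, one obtains
\begin{equation}\notag
\frac{1}{2}\frac{d}{dt}\|u_q\|_2^2 + \nu\lambda_q^2\|u_q\|_2^2 \leq \bigl|\langle \Delta_q(u\cdot\nabla u),\, u_q\rangle\bigr|,
\end{equation}
where the dissipative gain follows from Bernstein's inequality applied to $-\nu\langle \Delta u_q, u_q\rangle$.

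Next I would expand $\Delta_q(u\cdot\nabla u)$ by Bony's paraproduct into the low-high transport term $\Delta_q(u_{\leq q-2}\cdot\nabla u_q)$, a high-low term, and the diagonal high-high term. The transport piece I would rewrite as $u_{\leq q-2}\cdot\nabla u_q + [\Delta_q,\, u_{\leq q-2}\cdot\nabla]u_q$; when paired with $u_q$ the genuine transport part vanishes by the divergence-free condition, which is precisely the cancellation flagged at the end of Section \ref{mathe}, leaving only the commutator. Bounding the commutator by Lemma \ref{cmm} with $r_1=r_2=2$ and $r_3=\infty$ produces
\begin{equation}\notag
\bigl\|[\Delta_q,\, u_{\leq q-2}\cdot\nabla] u_q\bigr\|_2 \lesssim \|u_q\|_2 \sum_{p'\leq q-2}\lambda_{p'}\|u_{p'}\|_\infty,
\end{equation}
so the factor $\lambda_{p'}\|u_{p'}\|_\infty$ appearing in the hypothesis emerges naturally. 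The remaining paraproduct pieces I would treat similarly, distributing derivatives via Bernstein so that each retains either a factor of the form $\lambda_p\|u_p\|_\infty$ or a pairing against the dissipation term.

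The decisive step is the wavenumber splitting of the sum $\sum_{p'}\lambda_{p'}\|u_{p'}\|_\infty$ at $\Lambda(t)$. For inputs with frequency above $\Lambda(t)$, the defining smallness $\lambda_{p'}^{-1+3/r}\|u_{p'}\|_r < c_r\nu$ — optimized over the exponent $r$ through Bernstein — lets me absorb those contributions into the dissipative term $\nu\lambda_q^2\|u_q\|_2^2$ on the left-hand side. For inputs at or below $\Lambda(t)$ the contribution carries the factor $\lambda_{p'}\|u_{p'}\|_\infty$ with $p'\leq Q(t)$, which is exactly the integrand in the theorem. Summing over $q$ with the weights $\lambda_q^{2s}$ and applying Grönwall, the enstrophy on $[T/2,t]$ would be controlled by $\exp\bigl(C\int_{T/2}^t(\text{low-mode factor})\,ds\bigr)$, where the low-mode factor is governed by $1_{q\leq Q(s)}\lambda_q\|u_q\|_\infty$.

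The main obstacle, and the heart of the refinement over Theorem \ref{csh}, is converting a control by the full low-mode Besov norm $\|(\nabla\times u)_{\leq Q}\|_{B^0_{\infty,\infty}}$ into the weaker mode-wise $\limsup$ hypothesis. I expect the key to be a self-improving bootstrap on $[T/2,T)$: since $\Lambda(t)\to\infty$ near a putative blow-up, the interactions feeding high-frequency growth concentrate at the largest active wavenumbers, so the continuation is governed only by the behavior as $q\to\infty$. One must therefore arrange the Grönwall estimate so that it is the single cutoff-scale mode, rather than the whole sum, that enters, and then exploit that $\limsup_{q\to\infty}\int_{T/2}^T 1_{q\leq Q(t)}\lambda_q\|u_q\|_\infty\,dt$ lies strictly below the threshold $c_r$. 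The delicate point I would need to verify carefully is that it is the absolute smallness of $c_r$ relative to $\nu$ and the implied constants (not mere time-integrability) that keeps the high modes subcritical across the summation over $q$, so that the enstrophy remains bounded up to $T$ and the assumed blow-up is contradicted.
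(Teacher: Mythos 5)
Your setup --- the frequency-localized energy estimate, the commutator cancellation from incompressibility, Lemma \ref{cmm} with $r_1=r_2=2$, $r_3=\infty$, and the splitting at $\Lambda(t)$ so that modes above $Q(t)$ are absorbed into the dissipation --- is exactly the paper's route to the Gr\"onwall inequality (\ref{grn}), whose driving factor is the full sum $\sum_{q\le Q(t)}\lambda_q\|u_q\|_\infty$. The gap is in the step you yourself flag as ``delicate'': you never supply the mechanism that converts this sum into the mode-wise $\limsup$ hypothesis, and the mechanism you guess at (arranging matters so that ``the single cutoff-scale mode, rather than the whole sum, enters'') is not what happens and is not achievable --- the whole sum genuinely enters the exponent.

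What the paper actually does is keep the whole sum and control its time integral in two pieces. The hypothesis furnishes a finite index $q^*$ with $\int_{T/2}^T 1_{q\le Q(t)}\lambda_q\|u_q\|_\infty\,\mathrm{d}t < c_r$ for all $q>q^*$. The finitely many modes $q\le q^*$ contribute at most $q^*\lambda_{q^*}^{5/2}\|u_0\|_2 T$ by Bernstein and the energy bound; the modes $q^*<q\le Q(t)$ number at most $\bar Q(t)=\sup_{T/2\le\tau\le t}Q(\tau)$, each contributing less than $c_r$, so their total is at most $c_r\bar Q(t)$. The decisive point --- absent from your proposal --- is the choice $c_r=\e\ln 2/C(\nu,r,s)$ together with the bound $c_r\nu\Lambda^{s-1/2}\lesssim\|u\|_{H^s}$ (from the definition of $\Lambda$ and Bernstein), which with $s=\frac12+\e$ gives $\exp\bigl(C c_r\bar Q(t)\bigr)=2^{\e\bar Q(t)}\lesssim \nu^{-1}\sup_\tau\|u(\tau)\|_{H^s}$. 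The Gr\"onwall exponential is therefore bounded by a constant times the \emph{first} power of the unknown $\sup_\tau\|u(\tau)\|_{H^s}$, so the inequality closes as $X^2\lesssim CX$, hence $X\le C$, and $u$ stays in the subcritical space $H^{1/2+\e}$ up to $T$; no contradiction argument or blow-up assumption is needed. Without this counting of the active modes by $\bar Q(t)$ and the absorption of $2^{\e\bar Q}$ into the norm being estimated, your argument does not close: it is not the absolute smallness of $c_r$ relative to $\nu$ alone that does the work, but the fact that the number of modes between $q^*$ and $Q(t)$ is logarithmic in the quantity you are trying to bound.
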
 

The preludes to the above two results are notable results of regularity criteria in terms of Besov norms. Theorem (\ref{csh}) has improved the following Prodi-Serrin-Ladyzhenskaya criterion extended to Besov spaces, obtained by Kozono, Ogawa and Taniuchi \cite{KOT}.

%Gallagher, Koch and Planchon \cite{} extended the Escauriaza-Seregin-\v Sverak criterion to Besov spaces.

%\begin{Theorem}[Gallagher-Koch-Planchon]
%Let $u$ be a weak solution to system (\ref{nse}) such that it is regular on $[0,T)$. If $u \in L^\infty(0, T; B^{-1+\frac{3}{s}}_{s, q}),$ with $3 <s ,q < \infty$, then $u$ can be extended beyond time $T.$ 
%\end{Theorem}

\begin{Theorem}[Kozono-Ogawa-Taniuchi]\label{kot}
Let $u$ be a weak solution to system (\ref{nse}) such that it is regular on $[0,T)$. If $u \in L^1(0, T; B^{-1}_{\infty, \infty}),$ then $u$ can be extended beyond time $T.$ 
\end{Theorem}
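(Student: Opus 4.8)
The plan is to run a continuation argument. The local-in-time theory produces a unique strong solution in $C([0,T^*); H^s(\R^3))$ for $s \geq 3$ that persists as long as $\|u(t)\|_{H^s}$ remains finite (the viscous analogue of the local existence in Theorem \ref{bkm}), so it suffices to establish an a priori bound $\sup_{t \in [0,T)} \|u(t)\|_{H^s} < \infty$ under the hypothesis $\int_0^T \|u(t)\|_{B^{-1}_{\infty,\infty}}\,\mathrm{d}t < \infty$. Such a bound lets one restart the solution from a time arbitrarily close to $T$ and extend it past $T$, contradicting the maximality of the interval of existence.

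First I would localize in frequency: apply $\Delta_q$ to the first equation of (\ref{nse}), take the $L^2$ inner product with $u_q$, and note that the pressure contributes $\langle \nabla p_q, u_q\rangle = -\langle p_q, \nabla\cdot u_q\rangle = 0$ by incompressibility, so the pressure drops out. Multiplying by $\lambda_q^{2s}$, summing over $q \geq -1$, and using the norm equivalence $\|u\|_{H^s}^2 \sim \sum_q \lambda_q^{2s}\|u_q\|_2^2$ yields
\[
\frac{1}{2}\frac{\mathrm{d}}{\mathrm{d}t}\|u\|_{H^s}^2 + \nu \sum_{q}\lambda_q^{2s+2}\|u_q\|_2^2 = -\sum_q \lambda_q^{2s}\langle \Delta_q((u\cdot\nabla)u), u_q\rangle.
\]
The dissipative term on the left is favorable, and the entire difficulty is to control the nonlinear sum on the right by the dissipation plus a quantity amenable to a Gr\"onwall-type argument.

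For the nonlinear term I would decompose $(u\cdot\nabla)u$ with Bony's paraproduct. The essential piece is the low-high transport interaction $u_{\leq p-2}\cdot\nabla u_p$: writing $\Delta_q(u_{\leq p-2}\cdot\nabla u_p) = u_{\leq q-2}\cdot\nabla \Delta_q u_p + [\Delta_q, u_{\leq p-2}\cdot\nabla]u_p + (\text{lower-order remainders})$, the first term, once paired with $u_q$ and summed, cancels because $\langle u_{\leq q-2}\cdot\nabla u_q, u_q\rangle = -\tfrac12\langle (\nabla\cdot u_{\leq q-2})\,u_q, u_q\rangle = 0$ by the divergence-free condition. What survives is the commutator, which Lemma \ref{cmm} bounds by $\|u_p\|_{2}\sum_{p'\leq p-2}\lambda_{p'}\|u_{p'}\|_\infty$, while the high-low and high-high paraproduct pieces are handled directly with Bernstein's inequality. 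Collecting these estimates and reorganizing the dyadic sums, the goal is a bound of the form
\[
\Big|\sum_q \lambda_q^{2s}\langle \Delta_q((u\cdot\nabla)u), u_q\rangle\Big| \lesssim \|u\|_{B^{-1}_{\infty,\infty}}\,\|u\|_{H^s}^2\,\log\!\big(e+\|u\|_{H^s}\big) + (\text{absorbable terms}).
\]

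The heart of the argument, and the step I expect to be the main obstacle, is precisely this last inequality, a logarithmic interpolation of Brezis--Gallouet--Wainger / Kozono--Ogawa--Taniuchi type. The weak norm $\|u\|_{B^{-1}_{\infty,\infty}}$ controls each frequency block with no summable decay, so one cannot bound the full dyadic sum over the low modes $p'$ directly; the standard remedy is to split the summation at a threshold $N \sim \log(e+\|u\|_{H^s})$, estimating the modes below $N$ through the weak Besov norm and the tail above $N$ through the $H^s$ norm and the dissipation, with the balance producing the logarithmic factor. Once this is in hand, setting $y(t) = e + \|u(t)\|_{H^s}^2$ gives a differential inequality $y'(t) \lesssim g(t)\,y(t)\log y(t)$ with $g(t) = \|u(t)\|_{B^{-1}_{\infty,\infty}} \in L^1(0,T)$; integrating the separable bound yields $\log\log y(t) \lesssim \log\log y(0) + \int_0^t g(\tau)\,\mathrm{d}\tau$, which stays finite as $t\to T$. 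Hence $\|u\|_{H^s}$ remains bounded on $[0,T)$ and the solution extends beyond $T$.
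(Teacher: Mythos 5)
The paper does not actually prove this theorem --- it is quoted from \cite{KOT} --- and the only related argument in the text is the sketch given for Theorem \ref{csh}, whose architecture (frequency-localized $H^s$ energy estimate, Bony decomposition, commutator cancellation from $\nabla\cdot u_{\leq q-2}=0$, splitting the dyadic sums at a logarithmic threshold, Gr\"onwall with a log) is exactly what you propose. So your overall strategy is the standard and correct one for the Kozono--Ogawa--Taniuchi criterion as it appears in the literature.

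The genuine gap sits precisely at what you call the heart of the argument. The inequality $|I|\lesssim \|u\|_{B^{-1}_{\infty,\infty}}\,\|u\|_{H^s}^2\log(e+\|u\|_{H^s})+(\text{absorbable})$ is not obtainable with the norm $\|u\|_{B^{-1}_{\infty,\infty}}$ that the statement (as transcribed here) prescribes. After the commutator estimate, the low-mode contribution is governed by $\sum_{p'\leq N}\lambda_{p'}\|u_{p'}\|_\infty$. If one uses $\|u_{p'}\|_\infty\leq\lambda_{p'}^{-1}\|u\|_{B^{1}_{\infty,\infty}}$, this sum is $\lesssim N\,\|u\|_{B^{1}_{\infty,\infty}}$, linear in the threshold, and the choice $N\sim\log(e+\|u\|_{H^s})$ yields the logarithmic factor. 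With $B^{-1}_{\infty,\infty}$ one only has $\|u_{p'}\|_\infty\leq\lambda_{p'}\|u\|_{B^{-1}_{\infty,\infty}}$, so the sum is $\lesssim\lambda_N^{2}\|u\|_{B^{-1}_{\infty,\infty}}$ --- exponential in $N$ --- and the same choice of $N$ returns a power of $\|u\|_{H^s}$ instead of a logarithm, after which the Gr\"onwall closure fails. This is not a fixable technicality: since $u\in L^2(0,T;H^1)\hookrightarrow L^2(0,T;B^{-1/2}_{\infty,\infty})\subset L^1(0,T;B^{-1}_{\infty,\infty})$ on a finite interval, every Leray--Hopf solution satisfies the hypothesis as literally written, so the literal statement would resolve the full regularity problem. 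The index $-1$ is a typo for $+1$: the actual KOT condition is $\nabla\times u\in L^1(0,T;\dot B^{0}_{\infty,\infty})$, equivalently $u\in L^1(0,T;B^{1}_{\infty,\infty})$, which is consistent with the surrounding text asserting that Theorem \ref{csh}, stated in terms of $\|(\nabla\times u)_{\leq Q(t)}\|_{B^0_{\infty,\infty}}$, improves this result. With that correction, your outline --- including the cancellation of the transport term, the commutator bound from Lemma \ref{cmm}, and the double-logarithmic Gr\"onwall argument --- is the right proof.
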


In fact, it can be shown that the condition in theorem (\ref{csh}) is weaker than any Prodi-Serrin-Ladyzhenskaya type condition $u \in L^r(0, T; B^{2/r-1}_{\infty, \infty}).$ Meanwhile, theorem (\ref{chd}) has improved not only theorem (\ref{csh}) but also Planchon's refined Beale-Kato-Majda criterion (see \cite{P1}), stated as follows.

\begin{Theorem}[Planchon]\label{pln}
Let $u \in C(0,T; B^s_{p,q}), s \geq 1+\frac{n}{p}, 1\leq p,q \leq \infty$ be a solution to the Euler equations, that is, system (\ref{nse}) with $\nu=0$. There exists a constant $M_0$ such that $T$ is the maximal existence time iff $$\lim_{\varepsilon \to 0} \sup_{q \geq -1} \int_{T-\varepsilon}^{T}\|\Delta_q(\nabla \times u)\|_\infty \mathrm{d}t\geq M_0.$$
\end{Theorem}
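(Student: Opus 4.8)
The plan is to prove the substantive (continuation) direction and to dispatch its converse as soft. The soft direction is the contrapositive of ``quantity $\geq M_0\Rightarrow T$ maximal'': if $T$ is not maximal, so that $u\in C([0,T'];B^s_{p,q})$ for some $T'>T$, then $\omega=\nabla\times u\in C([0,T'];B^{s-1}_{p,q})$ with $s-1\geq n/p$, and Bernstein's inequality gives $\|\Delta_q\omega(t)\|_\infty\lesssim\lambda_q^{n/p-(s-1)}\lambda_q^{s-1}\|\Delta_q\omega(t)\|_p\lesssim\|\omega(t)\|_{B^{s-1}_{p,q}}$ uniformly in $q$; hence $\sup_q\int_{T-\varepsilon}^T\|\Delta_q\omega\|_\infty\,\mathrm{d}\tau\leq\varepsilon\sup_{[T-\varepsilon,T]}\sup_q\|\Delta_q\omega\|_\infty\to0$ as $\varepsilon\to0$, which falls below any fixed $M_0$. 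The content of the theorem is therefore the contrapositive of the forward implication: if $\lim_{\varepsilon\to0}\sup_q\int_{T-\varepsilon}^T\|\Delta_q\omega\|_\infty\,\mathrm{d}\tau<M_0$ for a sufficiently small absolute constant $M_0$, then $\|u(t)\|_{B^s_{p,q}}$ remains bounded up to $T$ and the solution extends.

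For that direction I would work with the vorticity form $\partial_t\omega+(u\cdot\nabla)\omega=(\omega\cdot\nabla)u$ and propagate $\omega\in B^{s-1}_{p,q}$, equivalent to $u\in B^s_{p,q}$. Applying $\Delta_q$ and performing an $L^p$ energy estimate on $\omega_q=\Delta_q\omega$, the transport term drops by incompressibility and one must control $\Delta_q(\omega\cdot\nabla u)$ and the commutator $[\Delta_q,u\cdot\nabla]\omega$. Decomposing both $u\cdot\nabla\omega$ and $\omega\cdot\nabla u$ by Bony's paraproduct, invoking the commutator estimate of Lemma \ref{cmm} together with Bernstein's inequality and the frequency-localized Biot--Savart bound $\|\Delta_{q'}\nabla u\|_\infty\lesssim\|\Delta_{q'}\omega\|_\infty$, one arrives at a differential inequality for $b_q(t):=\lambda_q^{s-1}\|\omega_q(t)\|_p$, whose $\ell^q$ norm in $q$ is $\|\omega(t)\|_{B^{s-1}_{p,q}}$, of the form
\begin{equation}\notag
b_q(t)\leq b_q(T-\varepsilon)+C\int_{T-\varepsilon}^t\sum_{q'}K_{q,q'}(\tau)\,b_{q'}(\tau)\,\mathrm{d}\tau,
\end{equation}
where the kernel $K_{q,q'}(\tau)$ is quasi-diagonal --- supported near $|q-q'|\lesssim1$ with summable off-diagonal tails --- and dominated by $\|\Delta_j\omega(\tau)\|_\infty$ for $j$ close to $q$.

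The crucial point distinguishing this refinement from the classical Beale--Kato--Majda argument is to close the system \emph{without} first bounding $\|\nabla u\|_\infty$: doing so would replace $K_{q,q'}$ by $\sup_j\|\Delta_j\omega\|_\infty$, and a Gronwall/ordered-exponential estimate would then produce the growth factor $\exp\big(\int_{T-\varepsilon}^T\sup_j\|\Delta_j\omega\|_\infty\,\mathrm{d}\tau\big)$, i.e.\ the \emph{supremum inside} the integral and merely a finiteness criterion. Instead I would take the supremum in time \emph{first}, setting $\beta_q:=\sup_{[T-\varepsilon,T]}b_q$, which converts the Volterra inequality into the algebraic system $\beta_q\leq b_q(T-\varepsilon)+\sum_{q'}M_{q,q'}\beta_{q'}$ with time-integrated coupling $M_{q,q'}=C\int_{T-\varepsilon}^T K_{q,q'}(\tau)\,\mathrm{d}\tau$. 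By quasi-diagonality and a Schur test, the $\ell^q\to\ell^q$ operator norm of $M$ is controlled by $\sup_j\int_{T-\varepsilon}^T\|\Delta_j\omega\|_\infty\,\mathrm{d}\tau<M_0$; choosing $M_0$ below the contraction threshold makes $\mathrm{Id}-M$ invertible via Neumann series and yields $\|(\beta_q)\|_{\ell^q}\lesssim\|\omega(T-\varepsilon)\|_{B^{s-1}_{p,q}}$, hence a uniform bound on $\|u(t)\|_{B^s_{p,q}}$ over $[T-\varepsilon,T]$ and an extension past $T$, contradicting maximality.

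I expect the main obstacles to be twofold. First, establishing the genuinely quasi-diagonal structure, with the coupling strength measured by the \emph{single-block} norms $\|\Delta_{q'}\omega\|_\infty$ rather than their sum: this is exactly what allows the supremum-outside-the-integral quantity, together with a smallness threshold $M_0$, to replace the finiteness of $\int_0^T\|\omega\|_\infty$, and it demands a careful separation of the three paraproduct pieces and a clean commutator bound so that the interactions do not leak across distant frequencies. Second, closing the high-frequency tail at the borderline index $s=1+n/p$, where the geometric gain in Bernstein degenerates, so that the summation in $q'$ and the off-diagonal decay of $K_{q,q'}$ must be handled through the $\ell^q$ structure of the Besov norm rather than a crude supremum; the non-local character of the stretching term $\omega\cdot\nabla u$, which reintroduces the Biot--Savart law and plays the role of the pressure in the velocity formulation, must likewise be shown to respect the same quasi-diagonal, single-block bound.
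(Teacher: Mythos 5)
The paper does not prove Theorem \ref{pln}: it is quoted as background, with the proof residing in Planchon's original article \cite{P1}, so there is no in-paper argument to measure you against. On its own terms, your outline does capture the mechanism that separates this criterion from Beale--Kato--Majda: the easy direction via Bernstein and $s-1\geq n/p$, and, for the substantive direction, frequency-localized vorticity estimates in which the supremum over time is taken \emph{before} the frequency interactions are summed, so that only $\sup_j\int_{T-\varepsilon}^T\|\Delta_j\omega\|_\infty\,\mathrm{d}t$ appears and smallness ($<M_0$) closes the estimate by absorption rather than by a Gronwall exponential. That is the right idea and it is Planchon's.

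The genuine gap is the step you yourself flag as an ``obstacle'' and then assume: that every piece of $u\cdot\nabla\omega$ and $\omega\cdot\nabla u$ couples frequency $q$ to frequency $q'$ through a kernel with summable off-diagonal tails whose strength is a \emph{single-block} norm $\|\Delta_{j}\omega\|_\infty$. For the transport commutator this is false as stated: Lemma \ref{cmm} (and its standard refinements) bounds $[\Delta_q,u_{\leq p-2}\cdot\nabla]v_p$ by $\|v_p\|_{r_2}\sum_{p'\leq p-2}\lambda_{p'}\|u_{p'}\|_{\infty}$, a \emph{flat} sum over all lower frequencies with no decay in $p-p'$ (the commutator gains only one derivative, which is exactly used up converting $\nabla v_p$ back to $v_p$). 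Its time integral is therefore only $\lesssim p\cdot\sup_{j}\int_{T-\varepsilon}^T\|\Delta_j\omega\|_\infty\,\mathrm{d}t$, so the rows of your matrix $M_{q,q'}$ have $\ell^1$ norm growing like $q$, the Schur test fails, and the Neumann series does not converge no matter how small $M_0$ is. This logarithmic loss is not an artifact of sloppiness --- it is precisely the factor $Q(t)$ that survives in the paper's Gronwall inequality \rf{grw} for the Navier--Stokes analogue, and dealing with it is the actual content of the proof (Planchon organizes the continuation argument differently rather than closing a single quasi-diagonal fixed-point system in $B^{s-1}_{p,q}$; Cheskidov--Dai absorb the log into $Q(t)\lesssim 1+\log\|u\|_{H^s}$). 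Two smaller points: the frequency-localized Biot--Savart bound $\|\Delta_{q'}\nabla u\|_\infty\lesssim\|\Delta_{q'}\omega\|_\infty$ needs a separate treatment of the block $q'=-1$, where the degree-zero multiplier is not bounded on $L^\infty$; and at the endpoint $s=1+n/p$ the statement requires $q=1$ (as transcribed in the survey, with arbitrary $q$, it is already slightly too generous), so your remark that the borderline index is delicate is warranted but, again, unresolved.
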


A sketch of the proofs of theorems (\ref{csh}) and (\ref{chd}) starts with considering the Littlewood-Paley projections of the NSE in higher order energy spaces. 
\begin{equation}\label{eng}
\begin{split}
\frac{1}{2}\frac{\mathrm{d}}{\mathrm{d}t}\sum_{q \geq -1}^\infty\lambda_q^{2s}\|u_q\|_2^2 \leq & -\nu\sum_{q \geq -1}^\infty \lambda_q^{2s+2}\|u_q\|_2^2-\sum_{q \geq -1}^\infty\int_{\R^3} \Delta_q(u \cdot \nabla u) u_q \mathrm{d}x\\
=: & -\nu\sum_{q \geq -1}^\infty \lambda_q^{2s+2}\|u_q\|_2^2+ I.
\end{split}
\end{equation}
It then becomes clear that the essential step is to analyze the nonlinear term $(u \cdot \nabla) u$, which translates into term $I$ in (\ref{eng}). As we shall see, the wavenumber splitting approach yields, for $s >\frac{1}{2},$
$$ |I| \lesssim c_r\nu \sum_{q \geq -1}\lambda_q^{2s+2}\|u_q\|_2^2+ Q(t)\|u_{\leq Q(t)}\|_{B^1_{\infty, \infty}}\sum_{q \geq -1}\lambda_q^{2s}\|u_q\|_2^2.$$  

To proceed, $I$ is split into many terms, which allows one to analyze the interactions between different frequencies. Bony's paraproduct decomposition yields 
\begin{equation}\notag
\begin{split}
I= & -\sum_{q \geq -1}\sum_{|p-q| \leq 2}\int_{\R^3} \Delta_q(u_p \cdot \nabla u_{ \leq p-2}) u_q \mathrm{d}x\\
& -\sum_{q \geq -1}\sum_{|p-q| \leq 2}\int_{\R^3} \Delta_q(u_{\leq p-2} \cdot \nabla u_{p}) u_q \mathrm{d}x\\
& -\sum_{q \geq -1}\sum_{|p-q| \leq 2}\int_{\R^3} \Delta_q(u_p \cdot \nabla \tilde u_{p}) u_q \mathrm{d}x\\
=: & I_1+ I_2+I_3. 
\end{split}
\end{equation}

Re-writing the terms using the commutator further reveals certain cancellations. We notice that in the following expression $I_{22}$ vanishes as a consequence of the facts $\sum_{|p-q| \leq 2}\Delta_q u_p =u_q$ and $\nabla \cdot u_{\leq p-2}=0.$
\begin{equation}\notag
\begin{split}
I_2= & -\sum_{q \geq -1}\sum_{|p-q| \leq 2} \int_{\R^3}[\Delta_q, u_{\leq p-2} \cdot \nabla ]u_p u_q\mathrm{d}t\\
&- \sum_{q \geq -1}\sum_{|p-q| \leq 2} \int_{\R^3} u_{\leq q-2} \nabla \Delta_q u_p u_q \mathrm{d}t\\
&- \sum_{q \geq -1}\sum_{|p-q| \leq 2} \int_{\R^3} (u_{\leq p-2}-u_{\leq q-2}) \nabla \Delta_q u_p u_q \mathrm{d}t\\
=:& I_{21}+I_{22}+I_{23}.
\end{split}
\end{equation}

One then utilizes $Q(t)$ to split all the terms above into low modes and high modes, for example
\begin{equation}\notag
\begin{split}
I_{1}= & -\sum_{-1 \leq q \leq Q}\sum_{|p-q| \leq 2}\int_{\R^3} \Delta_q(u_p \cdot \nabla u_{ \leq p-2}) u_q \mathrm{d}x\\
& -\sum_{q> Q}\sum_{|p-q| \leq 2}\int_{\R^3} \Delta_q(u_p \cdot \nabla u_{ \leq Q}) u_q \mathrm{d}x\\
& -\sum_{q> Q}\sum_{|p-q| \leq 2}\int_{\R^3} \Delta_q(u_p \cdot \nabla u_{(Q, p-2]}) u_q \mathrm{d}x\\
=:& I_{11}+I_{12}+I_{13}, 
\end{split}
\end{equation}
and 
\begin{equation}\notag
\begin{split}
I_{21}= & -\sum_{ -1 \leq p \leq Q+2 }\sum_{|p-q| \leq 2} \int_{\R^3}[\Delta_q, u_{\leq p-2} \cdot \nabla ]u_p u_q\mathrm{d}t\\
& -\sum_{p > Q+2}\sum_{|p-q| \leq 2} \int_{\R^3}[\Delta_q, u_{\leq Q} \cdot \nabla ]u_p u_q\mathrm{d}t\\
& -\sum_{p > Q+2}\sum_{|p-q| \leq 2} \int_{\R^3}[\Delta_q, u_{(Q, p-2]} \cdot \nabla ]u_p u_q\mathrm{d}t\\
=:& I_{211}+I_{212}+I_{213}, 
\end{split}
\end{equation}  
where $I_{11}, I_{12}, I_{211}$ and $I_{212}$ are low modes, while $I_{13}$ and $I_{213}$ are high modes. To estimate terms involving the commutator, lemma (\ref{cmm}) is used, while H\"older's and Young's inequalities are used to estimate terms such as $I_{11}, I_{12}$ and $I_{13}.$ It turns out that for low modes and high modes terms the following estimates holds true, respectively.
\begin{equation}\notag
\begin{split}
|I_\text{low modes}| \lesssim & Q(t)\|(\nabla \times u)_{\leq Q(t)}\|_{B^0_{\infty, \infty}} \sum_{q \geq -1}^\infty\lambda_q^{2s}\|u_q\|_2^2,\\
|I_\text{high modes}| \lesssim & c_r \nu \sum_{q \geq -1}^\infty\lambda_q^{2s+2}\|u_q\|_2^2.
\end{split}
\end{equation}

Hence the eventual outcome is a Gr\"onwall type inequality
\begin{equation}\label{grw}
\begin{split}
\frac{\mathrm{d}}{\mathrm{d}t}\sum_{q \geq -1}^\infty\lambda_q^{2s}\|u_q\|_2^2 \lesssim & (-1+c_r) \nu \sum_{q \geq -1}^\infty\lambda_q^{2s+2}\|u_q\|_2^2 \\
& + Q(t)\|u_{\leq Q_(t)}\|_{B^1_{\infty, \infty}}\sum_{q \geq -1}^\infty\lambda_q^{2s}\|u_q\|_2^2,
\end{split}
\end{equation}
where $s > \frac{1}{2},$ which already puts $u$ in a subcritical Sobolev space and $c_r$ can be chosen such that $c_r <1$. By the definition of $\Lambda(t)$ and Bernstein's inequality, one can infer that $$\Lambda \lesssim \|u_Q\|_\infty \lesssim \Lambda \lambda_Q^\frac{1}{2}\|u_Q\|_2 \lesssim \Lambda \lambda_Q^s\|u_Q\|_2.$$ Therefore $Q(t) \lesssim (1+\log \|u\|_{H^s}),$ and inequality (\ref{grw}) becomes
\begin{equation}\notag
\begin{split}
\frac{\mathrm{d}}{\mathrm{d}t}\|u\|_{H^s}^2 \lesssim \|(\nabla \times u)_{\leq Q(t)}\|_{B^0_{\infty, \infty}}(1+\log \|u\|_{H^s})\|u\|_{H^s}^2,
\end{split}
\end{equation}
implying that $u \in L^\infty(0,T; H^s)\cap L^2(0, T; H^{s+1})$ is regular on $[0, T]$ provided that $\|(\nabla \times u)_{\leq Q(t)}\|_{B^0_{\infty, \infty}} \in L^1(0,T),$ which is the condition in theorem (\ref{csh}).

In the proof of theorem (\ref{chd}) the condition $\|(\nabla \times u)_{\leq Q(t)}\|_{B^0_{\infty, \infty}} \in L^1(0,T)$ is weakened via a more delicate analysis. First, by the same procedure as above, one arrives at the following Gr\"onwall type inequality, which slightly differs from (\ref{grw}).
\begin{equation}\label{grn}
\begin{split}
\frac{\mathrm{d}}{\mathrm{d}t}\sum_{p \geq -1}^\infty\lambda_p^{2s}\|u_p\|_2^2 \leq C(\nu,r,s) \sum_{q \leq Q(t)}\lambda_q \|u_q\|_\infty \sum_{p \geq -1}^\infty\lambda_p^{2s}\|u_p\|_2^2.
\end{split}
\end{equation} Assuming that the condition in theorem (\ref{chd}) holds, one can introduce an index $$q^*=\Big\{q:\int_{T/2}^T 1_{q \leq Q(t)}\lambda_q\|u_q\|_\infty \mathrm{d}t < c_r,\ \forall q >q^* \Big\},$$ with $c_r = \e \ln 2/C(\nu, r,s)$ for some small $\e>0.$
 Splitting the sum yields $$\sum_{q \leq Q(t)}\lambda_q \|u_q\|_\infty =\sum_{q \leq q^*}\lambda_q \|u_q\|_\infty+\sum_{q^* < q \leq Q(t)}\lambda_q \|u_q\|_\infty=: f_{\leq q^*}+f_{>q^*}.$$ It is then not difficult to see that, for $t \in [\frac{T}{2}, T]$
\begin{equation}\notag
\begin{split}
\int_{\frac{T}{2}}^t \sum_{q \leq Q(\tau)} \lambda_q\|u_q\|_\infty \mathrm{d}\tau =&\int_{\frac{T}{2}}^t f_{\leq q^*}(\tau) \mathrm{d}\tau+ \int_{\frac{T}{2}}^t f_{>q^*}(\tau) \mathrm{d}\tau
\leq q^*\lambda_{q^*}^\frac{5}{2}\|u_0\|_2 T+\bar Q(t) c_r,
\end{split}
\end{equation} 
where $\bar Q(t)=\sup_{\frac{T}{2} \leq \tau \leq t} Q(\tau).$

By the definition of $\Lambda(t)$ and Bernstein's inequality, one has, for $\Lambda(t) >1$
$$c_r \nu \Lambda^{1-\frac{3}{r}} \leq \|u_Q\|_r \lesssim \Lambda^{\frac{3}{2}-\frac{3}{r}}\|u_Q\|_2,$$
from which one can infer 
$$c_r \nu \Lambda^{s-\frac{1}{2}} \lesssim \Lambda^s\|u_Q\|_2 \lesssim \|u\|_{H^s}.$$ Let $s=\frac{1}{2}+\e,$ the above inequality yields $2^{\e \bar Q(t)} \lesssim \frac{1}{\nu}\|u\|_{H^s}.$ Hence, from inequality (\ref{grn}) one has
\begin{equation}\label{grl}
\begin{split}
\|u(t)\|_{H^s}^2 \leq & \exp \Big(C(\nu, r,s)\int^t_{\frac{T}{2}} (f_{\leq q^*}(\tau)+f_{>q^*}(\tau))\mathrm{d}\tau \Big)\|u(T/2)\|_{H^s}^2\\
\leq & \frac{1}{\nu}\exp (C(\nu, r,s)q^*\lambda{q^*}^\frac{5}{2}\|u_0\|_2 T) \sup_{\frac{T}{2}\leq \tau \leq t}\|u(\tau)\|_{H^s} \|u(T/2)\|_{H^s}^2.\\
\end{split}
\end{equation}
Noticing that the right hand side of (\ref{grl}) is bounded, one can thus conclude theorem (\ref{chd}), which assumes a condition weaker than all the ones listed below. 
\begin{align*}\notag
& \limsup_{q \to \infty }\int^T_{\mathcal{T}_q}\|\Delta_q (\nabla \times u)\|_\infty \mathrm{dt} < c_r,\tag{i} \\ 
& \lim_{\varepsilon \to 0}\limsup_{q \to \infty }\int^T_{T-\varepsilon}\|\Delta_q (\nabla \times u)\|_\infty \mathrm{dt} < c_r,\tag{ii} \\
& \int^T_0\sup_{q\leq Q(t)}\|\Delta_q (\nabla \times u)\|_\infty \mathrm{dt} < \infty,\tag{iii} \\
& \limsup_{q \to \infty}\int^T_0 1_{q\leq Q(t)}\big(\lambda_q^{-1+\frac{3}{r}+\frac{2}{\ell}}\|u\|_r\big)^\ell \mathrm{dt} < \nu^{\ell-1} c_r^\ell,\tag{iv} \\
& \limsup_{q \to \infty}\int^T_{\mathcal{T}_q}\big(\lambda_q^{-1+\frac{3}{r}+\frac{2}{\ell}}\|u\|_r\big)^\ell \mathrm{dt} < \nu^{\ell-1} c_r^\ell,\tag{v} \\
& \lim_{\varepsilon \to 0}\limsup_{q \to \infty}\int^T_{T-\varepsilon}\big(\lambda_q^{-1+\frac{3}{r}+\frac{2}{\ell}}\|u\|_r\big)^\ell \mathrm{dt} < \nu^{\ell-1} c_r^\ell,\tag{vi} \\
& \int^T_0\big(\|u_{\leq Q(t)}\|_{B_{r, \infty}^{-1+\frac{3}{r}+\frac{2}{\ell}}}\big)^\ell \mathrm{dt} < \nu^{\ell-1} c_r^\ell,\tag{vii} \\
&  \limsup_{t \to T^-}\|u(t)-u(T)\|_{B^{-1+\frac{3}{r}}_{r,\infty}}\leq \frac{c_r}{2}, \tag{viii}
\end{align*}
where $2 \leq r \leq \infty, 1 \leq \ell \leq \infty,$ and $\mathcal{T}_q:=\sup\{t \in (\frac{T}{2}, T): Q(\tau)<q, \forall \tau \in (\frac{T}{2}, t)\}.$ Here the conditions (vi), (viii) and (iii), (vii) can be found in Cheskidov and Shvydkoy's works \cite{CS1} and \cite{CS2}, respectively.

We recall the wavenumber with intermittency correction $\kappa_\mathrm{d} = (\e/\nu^3)^\frac{1}{4-s}$ as well as Kolmogorov's theory, according to which the rate of energy dissipation/supply $\e,$ defined as $$\e:= \frac{1}{T}\int^T_0 \|\nabla u\|_2^2 \mathrm{d}t$$ plays a role in both the inertial range and the dissipation range, whereas $\nu$ the viscosity is significant only in the dissipation range. A remarkable feature of $\Lambda(t)$ is that it divides the dissipation range and the inertial range more precisely than $\kappa_\mathrm{d}$ in the sense that $\langle \Lambda \rangle \lesssim \kappa_\mathrm{d},$ where $\langle\Lambda \rangle$ denotes the time average of $\Lambda(t).$ To demonstrate this, we denote $\langle \Lambda \rangle_U =: \frac{1}{T}\int_U \Lambda(t)\mathrm{d}t,$ with $U:= \{t \in [0,T]: \Lambda(t) >1\},$ and calculate as follows:
\begin{equation}\notag
\begin{split}
\langle \Lambda \rangle -1 \leq & \langle \Lambda \rangle_U \leq \bigg(\frac{\langle \Lambda (\nu c_\infty )^\frac{2}{4-\s}\rangle^{4-\s}_U}{\nu^2 c_\infty^2}\bigg)^\frac{1}{4-\s}\\
\leq & \bigg(\frac{\langle \|u_Q\|_\infty^\frac{2}{4-\s}\Lambda^\frac{2-\s}{4-\s} \rangle_U^{4-\s}}{\nu^2 c^2_\infty}\bigg)^\frac{1}{4-\s} \leq \bigg(\frac{\langle \|u_Q\|_\infty^2\Lambda^{2-\s}\rangle_U}{\nu^2 c^2_\infty}\bigg)^\frac{1}{4-\s}.\\
\end{split}
\end{equation}
Here the parameter $\s \in [0,3]$ is the dimension of the set in the Fourier space where dissipation occurs, whose dual $d=3-\s$ is the dimension of the set in the physical space where dissipation occurs, as mentioned in section (\ref{kmgrv}). Choosing any $\s$ such that $\Lambda^{2-\s}\|u_Q\|_\infty^2 \lesssim \Lambda^{2}\|u_Q\|_2^2$ is satisfied, it follows that
$$ \langle \Lambda \rangle -1  \lesssim \bigg(\frac{1}{\nu^3} \frac{\nu}{T}\int_U \Lambda(t)^2\|u_Q\|_2^2 \mathrm{d}t\bigg)^\frac{1}{4-\s} \lesssim \Big(\frac{\e}{\nu^3}\Big)^\frac{1}{4-\s}=\kappa_\mathrm{d}.$$
This observation indicates that the effects of viscosity start to manifest earlier than predicted by the dimensional argument. Moreover, it can be shown that if $\s <\frac{3}{2},$ then $u$ is in fact regular on $[0,T].$

\subsection{The MHD system}

The MHD equations describe the evolution of a system consisting of an electrically conducting fluid and an external magnetic field, influencing each other. It is a model vital to plasma physics, geophysics and several branches of engineering. The following form of the MHD system, in which $u$ is the fluid velocity, $p$ the pressure and $b$ the magnetic field, can be derived from the coupling of the NSE with the Maxwell's equations of electromagnetism. 
\begin{equation}\label{mhd}
\begin{cases}
u_t +u \cdot \nabla u -b \cdot \nabla b +\nabla p = \nu \Delta u,\\
b_t +u \cdot \nabla b -b \cdot \nabla u = \mu \Delta b\\
\nabla \cdot u =0, \ \nabla \cdot b=0,
\end{cases}
\end{equation}
with $x \in \R^3$ and $t \geq 0.$ The constants $\nu$ and $\mu$ are the fluid viscosity coefficient and magnetic resistivity coefficient, respectively. The MHD system and the NSE share many similar features; in fact, the MHD system reduces to the NSE if $b \equiv 0$. System (\ref{mhd}) enjoys the following scaling-invariance - 
\begin{equation}\notag
u_\lambda (t,x)= \frac{1}{\lambda}u(\frac{t}{\lambda^2}, \frac{x}{\lambda}), \ b_\lambda (t,x)= \frac{1}{\lambda}b(\frac{t}{\lambda^2}, \frac{x}{\lambda}), \ p_\lambda (t,x)= \frac{1}{\lambda^2}p(\frac{t}{\lambda^2}, \frac{x}{\lambda})
\end{equation}
solve system (\ref{mhd}) with initial data $(\lambda^{-1}u_0(\lambda^{-1}x), \lambda^{-1}b_0(\lambda^{-1}x)),$ provided that $(u(t,x), b(t,x), p(t,x))$ is a solution corresponding to the initial data $(u_0(x), b_0(x)).$ Global existence of Leray-Hopf type weak  solutions was established by Sermange and Temam \cite{ST}, as well as by Duvaut and Lions \cite{DL2}. For the inviscid case $\nu=0,$ a Beale-Kato-Majda type regularity criterion was due to Caflisch, Klapper and Steele \cite{CKS}.
\begin{Theorem}
Let $(u_0, b_0) \in H^s, s >3.$ Then there exists a solution $(u ,b) \in C(0,T; H^s)\cap C^1(0,T; H^{s-1}),$ which blows up at time $T^*$ iff 
$$\int^{T*}_0 (\|\nabla \times u\|_\infty + \|\nabla \times b\|_\infty)\mathrm{d}t=+\infty.$$ 
\end{Theorem}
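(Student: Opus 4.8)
The plan is to follow the scheme of the classical Beale--Kato--Majda argument (Theorem \ref{bkm}), adapted to the coupled MHD structure with $\nu=\mu=0$. First I would record the local existence and continuation principle: for $(u_0,b_0)\in H^s$ with $s>3$ (above the scaling threshold $s>5/2$ in three dimensions), a standard energy/Picard iteration yields a unique solution $(u,b)\in C(0,T;H^s)\cap C^1(0,T;H^{s-1})$ on a maximal interval $[0,T^*)$, together with the continuation criterion that $T^*<\infty$ forces $\limsup_{t\to T^*}\|(u,b)(t)\|_{H^s}=\infty$. This reduces the theorem to a single a priori statement: if $\int_0^{T^*}(\|\nabla\times u\|_\infty+\|\nabla\times b\|_\infty)\,dt<\infty$, then $\|(u,b)(t)\|_{H^s}$ stays bounded on $[0,T^*)$. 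The reverse implication is immediate, since on a closed existence interval $\nabla\times u,\nabla\times b\in C(0,T^*;H^{s-1})\hookrightarrow C(0,T^*;L^\infty)$ have finite time integral.

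For the a priori bound I would apply $J^s:=(I-\Delta)^{s/2}$ to each equation, pair in $L^2$ with $J^su$ and $J^sb$ respectively, and add. The transport terms $\langle u\cdot\nabla J^su,J^su\rangle$ and $\langle u\cdot\nabla J^sb,J^sb\rangle$ vanish because $\nabla\cdot u=0$, and the pressure term drops for the same reason. The decisive point is the magnetic coupling: the two surviving top-order cross terms $-\langle b\cdot\nabla J^sb,J^su\rangle$ and $-\langle b\cdot\nabla J^su,J^sb\rangle$ cancel exactly, since $\nabla\cdot b=0$ gives $\int b\cdot\nabla(J^su\cdot J^sb)\,dx=0$; this is the $H^s$ analogue of the $L^2$ energy conservation $\|u\|_2^2+\|b\|_2^2=\|u_0\|_2^2+\|b_0\|_2^2$ and of the nonlinear cancellation exploited for the NSE in Section~\ref{mathe}. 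What remains are four commutators, for the pairs $(f,g)\in\{(u,u),(b,b),(u,b),(b,u)\}$, which I would bound by the Kato--Ponce estimate
$$\|[J^s,f\cdot\nabla]g\|_2\lesssim \|\nabla f\|_\infty\|g\|_{H^s}+\|f\|_{H^s}\|\nabla g\|_\infty.$$
After Cauchy--Schwarz this yields
\begin{equation}\notag
\frac{d}{dt}\|(u,b)\|_{H^s}^2\lesssim(\|\nabla u\|_\infty+\|\nabla b\|_\infty)\|(u,b)\|_{H^s}^2.
\end{equation}

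To pass from the full gradients to the curls I would invoke the logarithmic Sobolev inequality underlying Beale--Kato--Majda: for a divergence-free field $w$ and $s>5/2$,
$$\|\nabla w\|_\infty\lesssim 1+\|\nabla\times w\|_\infty\big(1+\log^+\|w\|_{H^s}\big)+\|w\|_2.$$
Since both $u$ and $b$ are divergence free, $\nabla u$ and $\nabla b$ are recovered from $\nabla\times u$ and $\nabla\times b$ through Riesz transforms, so this applies to each field, with the additive $\|u\|_2+\|b\|_2$ controlled by the conserved ideal-MHD energy. Writing $y(t)=\log(e+\|(u,b)(t)\|_{H^s}^2)$, the energy inequality becomes a logarithmic Gr\"onwall inequality $y'(t)\lesssim 1+(\|\nabla\times u\|_\infty+\|\nabla\times b\|_\infty)\,y(t)$, whose integration gives the double-exponential bound
$$\|(u,b)(t)\|_{H^s}^2\le C\exp\Big(\exp\big(C\!\int_0^t(\|\nabla\times u\|_\infty+\|\nabla\times b\|_\infty)\,ds\big)\Big).$$
Thus finiteness of the integral keeps $\|(u,b)\|_{H^s}$ bounded on $[0,T^*)$, contradicting the continuation criterion, so blow-up must force the integral to diverge.

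I expect the main obstacle to be the step replacing full gradients by curls, that is, establishing and applying the logarithmic Sobolev inequality for \emph{both} fields simultaneously while keeping every lower-order contribution tied to the conserved energy rather than to a higher Sobolev norm; otherwise one generates a cubic term $\|(u,b)\|_{H^s}^3$ that ruins the logarithmic Gr\"onwall. The commutator bookkeeping and the transport cancellations are routine once the symmetric cross-cancellation in the magnetic terms is identified; the genuine content is the borderline $L^\infty$ control of $\nabla u,\nabla b$ by the curls, which is exactly where the harmonic analysis machinery of Section~\ref{mathe} enters.
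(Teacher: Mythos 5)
The paper states this result without proof, citing Caflisch--Klapper--Steele \cite{CKS}, so there is no internal argument to compare against; your proposal is the standard Beale--Kato--Majda scheme adapted to ideal MHD and is essentially the argument of that reference: local existence and a continuation criterion in $H^s$, Kato--Ponce commutator estimates, the symmetric cancellation of the top-order magnetic cross terms via $\int b\cdot\nabla(J^su\cdot J^sb)\,dx=0$, and the logarithmic Sobolev inequality followed by a logarithmic Gr\"onwall argument. The outline is correct, and taking the lower-order term in the logarithmic inequality to be $\|w\|_2$ (controlled by the conserved energy $\|u\|_2^2+\|b\|_2^2$) rather than $\|\nabla\times w\|_2$ is a clean touch that sidesteps the separate enstrophy estimate used in the original Euler argument.
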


A Planchon-type regularity criterion, which has extended the above regularity criterion, was due to Cannone, Miao and Chen \cite{CCM}.
\begin{Theorem}
Let $(u_0, b_0) \in B^s_{p,q}, s > \frac{n}{p}+1, 1 \leq p,q <\infty.$ Suppose that $(u,b) \in C(0,T; B^s_{p,q}) \cap C^1(0,T; B^{s-1}_{p,q})$ is a regular solution to system (\ref{mhd}) on $[0,T).$ Then $(u,b)$ can be extended beyond time $T$ iff there exists a constant $M_0$ such that
$$\lim_{\e \to 0} \sup_{q \geq -1}\int_{T-\e}^T (\|\Delta_q(\nabla \times u)\|_\infty+\|\Delta_q(\nabla \times b)\|_\infty) \mathrm{d}t <M_0.$$
\end{Theorem}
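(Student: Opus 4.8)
The plan is to treat the equivalence as an easy direction plus a hard one. If $(u,b)$ extends past $T$ as a regular solution, then $\nabla\times u$ and $\nabla\times b$ are bounded on a neighbourhood of $T$, so each $\|\Delta_q(\nabla\times u)\|_\infty+\|\Delta_q(\nabla\times b)\|_\infty$ is uniformly bounded and $\int_{T-\e}^T(\cdots)\,\mathrm{d}t\to 0$ as $\e\to0$, which lies below any fixed threshold $M_0$. The content is therefore the converse: assuming the displayed quantity stays below a suitable $M_0$, I must produce an a priori bound on $\|(u,b)(t)\|_{B^s_{p,q}}$ as $t\to T^-$, which by the local existence theory continues the solution past $T$. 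The first move is to pass to the Elsässer variables $z^\pm=u\pm b$, which diagonalise the quadratic interaction: system (\ref{mhd}) becomes
\[
\partial_t z^\pm+(z^\mp\cdot\nabla)z^\pm+\nabla p=\tfrac{\nu+\mu}{2}\Delta z^\pm+\tfrac{\nu-\mu}{2}\Delta z^\mp,\qquad \nabla\cdot z^\pm=0 .
\]
Since $\nabla\times z^\pm=(\nabla\times u)\pm(\nabla\times b)$, the hypothesis controls $\sup_{q}\int_{T-\e}^T\|\Delta_q(\nabla\times z^\pm)\|_\infty\,\mathrm{d}t$, and each Elsässer equation now carries a single NSE-type transport nonlinearity with $\nabla\cdot z^\mp=0$.

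Next I would localise in frequency. Applying $\Delta_q$, testing the block against $|\Delta_q z^\pm|^{p-2}\Delta_q z^\pm$, and eliminating the pressure with the Leray projection (bounded on and commuting with each dyadic block), I expand $\Delta_q(z^\mp\cdot\nabla z^\pm)$ by Bony's paraproduct. The main low-high piece produces the transport term $z^\mp_{\le q-2}\cdot\nabla\Delta_q z^\pm$, whose contribution vanishes after integration by parts thanks to $\nabla\cdot z^\mp_{\le q-2}=0$; the associated error is the commutator $[\Delta_q,z^\mp_{\le q-2}\cdot\nabla]z^\pm_{q}$, controlled by Lemma~\ref{cmm}, while the remaining high-high and high-low paraproducts are handled by H\"older, Young and Bernstein, the dissipative terms only helping. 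Multiplying by $\lambda_q^s$ and summing in $\ell^q$, this yields a differential inequality of the form
\[
\frac{\mathrm{d}}{\mathrm{d}t}\|(z^+,z^-)\|_{B^s_{p,q}}\ \lesssim\ \big(\|\nabla z^+\|_\infty+\|\nabla z^-\|_\infty\big)\,\|(z^+,z^-)\|_{B^s_{p,q}} .
\]

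To close it I would invoke the logarithmic interpolation inequality, valid for $s>1+\frac np$ (where, on each annulus, the Riesz transforms are bounded and so $\nabla$ and $\nabla\times$ are comparable),
\[
\|\nabla z^\pm\|_\infty\ \lesssim\ 1+\|\nabla\times z^\pm\|_{B^0_{\infty,\infty}}\,\log\!\big(e+\|(z^+,z^-)\|_{B^s_{p,q}}\big),
\]
and then run a Gr\"onwall argument in $\log(e+\|(z^+,z^-)\|_{B^s_{p,q}})$ on $[T-\e,t]$. Because the solution is regular on $[0,T)$, the data at $T-\e$ are finite, so finiteness of $\int_{T-\e}^T(\|\nabla\times z^+\|_{B^0_{\infty,\infty}}+\|\nabla\times z^-\|_{B^0_{\infty,\infty}})\,\mathrm{d}t$ would give the sought bound and hence extensibility.

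I expect the main obstacle to be precisely the sharpening that makes this a Planchon-type rather than a Kozono--Ogawa--Taniuchi-type criterion. The naive commutator bound of Lemma~\ref{cmm} produces the \emph{full low-frequency sum} $\sum_{k\le q}\|\Delta_k(\nabla\times z^\pm)\|_\infty$, so the crude Gr\"onwall tolerates only $\int_{T-\e}^T\sup_q\|\Delta_q(\nabla\times z^\pm)\|_\infty\,\mathrm{d}t<\infty$, whereas the theorem asks for the strictly weaker $\sup_q\int_{T-\e}^T\|\Delta_q(\nabla\times z^\pm)\|_\infty\,\mathrm{d}t<M_0$ — and one may interchange $\sup$ and $\int$ only in the wrong direction. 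Overcoming this requires a frequency-by-frequency analysis in which the growth of each high block $\lambda_q^s\|\Delta_q z^\pm\|_p$ is governed essentially by the current/vorticity in a band near $q$, the remaining low-frequency contribution being absorbed into the logarithm and into the energy-controlled quantities that stay bounded up to $T$, so that the exponent in the block-wise Gr\"onwall is uniformly $\lesssim M_0$ rather than $\lesssim qM_0$. Letting $\e\to0$ then makes the relevant integrals small and fixes the threshold $M_0$. The coupling between $u$ and $b$ is rendered transparent by the Elsässer reduction, so relative to the Euler/NSE case the genuinely new bookkeeping is only this refined localisation carried out simultaneously for $z^+$ and $z^-$.
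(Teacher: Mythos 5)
First, a point of comparison: the paper does not actually prove this theorem --- it is quoted from Cannone, Miao and Chen \cite{CCM}, whose setting is the \emph{ideal} MHD system ($\nu=\mu=0$, extending the Caflisch--Klapper--Steele criterion), so the dissipative terms you retain in the Els\"asser formulation are not something the argument can lean on. Your easy direction and your overall scaffolding (Els\"asser variables $z^\pm=u\pm b$, dyadic blocks, Bony decomposition, the commutator of Lemma \ref{cmm}, logarithmic interpolation plus Gr\"onwall) are sound, and they do prove the weaker, Kozono--Ogawa--Taniuchi-type criterion $\int_0^T\sup_{q}\big(\|\Delta_q(\nabla\times u)\|_\infty+\|\Delta_q(\nabla\times b)\|_\infty\big)\,\mathrm{d}t<\infty$ (cf.\ Theorem \ref{kot}).

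The genuine gap is at the step you yourself flag as the obstacle: the hypothesis controls $\sup_q\int$, not $\int\sup_q$, and you then assert, without a mechanism, that a ``frequency-by-frequency analysis'' makes the exponent in the blockwise Gr\"onwall uniformly $\lesssim M_0$ rather than $\lesssim qM_0$. That is not achievable by rearranging the paraproduct: the commutator acting on $\Delta_q z^\pm$ genuinely carries $\sum_{k\le q-2}\lambda_k\|\Delta_k z^\mp\|_\infty$, i.e.\ roughly $q$ terms each of whose time integrals is only known to be $<M_0$, so the blockwise growth factor over $[T-\e,T]$ is $\exp(CqM_0)\sim\lambda_q^{CM_0}$. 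The correct resolution --- Planchon's, and the ``losing estimate'' that is the whole point of \cite{CCM} --- is to \emph{accept} this loss rather than eliminate it: one propagates a norm with a time-dependent regularity index $s-\sigma(t)$, with $\sigma'(t)$ proportional to the assumed small quantity, obtaining an a priori bound in $\widetilde L^\infty(T-\e,T;B^{s-\sigma(t)}_{p,q})$ with total loss $\sigma(T)\lesssim M_0$. Choosing $M_0$ and $\e$ small enough that $s-\sigma(T)>\frac{n}{p}+1$ keeps the solution in a space that still embeds into the Lipschitz class, which closes the logarithmic estimate and continues the solution; this is also why the threshold $M_0$ must be a fixed universal constant rather than an arbitrary finite bound. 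Without this device your argument only delivers the $\int\sup_q$ criterion, which is strictly stronger than the hypothesis of the theorem.
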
 

Consistent with numerical and theoretical observations that the velocity $u$ plays the dominant role in the interactions of $u$ and $b,$ Prodi-Serrin type regularity criteria in terms of only $u$ or $\nabla u$ can be found in the works of He and Xin \cite{HX}, as well as, He and Wang \cite{HW}, which can be summarized as follows.
\begin{Theorem} Let $(u_0, b_0) \in L^2_\s.$ Suppose that $(u,b) \in L^\infty (0,T; H) \cap L^2(0,T; H^2)$ is a regular solution to system (\ref{mhd}). Then $(u,b)$ can be extended beyond time $T$ iff 
\begin{equation}\notag
\begin{split}
& u \in L^\a(0,T; L^\b), \ \frac{2}{\a}+\frac{3}{\b}=1,\ 3 < \b \leq \infty,\\
&\text{or }\nabla u \in L^\a(0,T; L^\b), \ \frac{2}{\a}+\frac{3}{\b}=2,\ 3 < \b \leq \infty.
\end{split}
\end{equation}
\end{Theorem}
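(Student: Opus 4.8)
The criterion is of Prodi--Serrin type, so the \emph{necessity} direction is immediate: if $(u,b)$ extends past $T$ it is smooth on a neighborhood of $[0,T]$, hence bounded, and therefore lies in every space listed. The content is \emph{sufficiency}, and the plan is to promote the assumed integrability into an a priori bound in $L^\infty(0,T;H^1)\cap L^2(0,T;H^2)$, after which full regularity (and thus extension past $T$) follows by the standard subcritical bootstrap. Since $(u,b)$ is regular on $[0,T)$, all manipulations are justified there. The starting point is the first-order energy identity: take the $L^2$ pairing of the $u$-equation in \rf{mhd} with $-\Delta u$ and of the $b$-equation with $-\Delta b$, and add. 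The pressure drops by $\nabla\cdot u=0$, the dissipation yields $\nu\|\Delta u\|_2^2+\mu\|\Delta b\|_2^2$, and the right-hand side is a sum of four cubic terms
\begin{equation}\notag
\begin{split}
N_1=&\int_{\R^3}(u\cdot\nabla u)\cdot\Delta u\,\mathrm{d}x, \qquad N_2=-\int_{\R^3}(b\cdot\nabla b)\cdot\Delta u\,\mathrm{d}x,\\
N_3=&\int_{\R^3}(u\cdot\nabla b)\cdot\Delta b\,\mathrm{d}x, \qquad N_4=-\int_{\R^3}(b\cdot\nabla u)\cdot\Delta b\,\mathrm{d}x.
\end{split}
\end{equation}
Everything reduces to absorbing these into the dissipation plus a term $g(t)\big(\|\nabla u\|_2^2+\|\nabla b\|_2^2\big)$ with $g\in L^1(0,T)$.

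The velocity-transport terms $N_1$ and $N_3$ are handled exactly as for the NSE. Under the first hypothesis $u\in L^\a L^\b$ with $\frac{2}{\a}+\frac{3}{\b}=1$, apply H\"older with exponents $(\b,\tfrac{2\b}{\b-2},2)$, then the Gagliardo--Nirenberg bound $\|\nabla w\|_{2\b/(\b-2)}\lesssim\|\nabla w\|_2^{1-3/\b}\|\Delta w\|_2^{3/\b}$ (with $w=u$ for $N_1$, $w=b$ for $N_3$), and Young's inequality; since $\b>3$ the power $1+\tfrac{3}{\b}$ of $\|\Delta w\|_2$ is strictly below $2$ and is absorbed, leaving $C\|u\|_\b^{\,\a}\|\nabla w\|_2^2$ where the Prodi--Serrin arithmetic produces precisely $\a=\tfrac{2\b}{\b-3}$. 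Under the second hypothesis $\nabla u\in L^\a L^\b$ with $\frac{2}{\a}+\frac{3}{\b}=2$, one first integrates by parts and uses $\nabla\cdot u=0$ to rewrite $N_1=-\int\partial_k u_j\,\partial_j u_i\,\partial_k u_i\,\mathrm{d}x$ (and likewise $N_3$) as a cubic expression in first derivatives, then estimates by $\|\nabla u\|_\b\|\nabla w\|_{2\b/(\b-1)}^2$ and absorbs as before, now with $\a=\tfrac{2\b}{2\b-3}$.

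The crux is the magnetic coupling $N_2+N_4$, the only place where a naive estimate would seem to demand a condition on $b$. Integrating by parts once in each term moves a derivative off $\Delta u$ and $\Delta b$; the two resulting second-derivative contributions combine into $\int_{\R^3} b_j\,\partial_j(\partial_k b_i\,\partial_k u_i)\,\mathrm{d}x$, which vanishes after one more integration by parts because $\nabla\cdot b=0$. This cancellation is the heart of the matter: what survives is the cubic first-derivative expression
\begin{equation}\notag
N_2+N_4=\int_{\R^3}\partial_k b_j\,\partial_j b_i\,\partial_k u_i\,\mathrm{d}x+\int_{\R^3}\partial_k b_j\,\partial_j u_i\,\partial_k b_i\,\mathrm{d}x,
\end{equation}
carrying two factors of $\nabla b$ and one of $\nabla u$. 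Under the second hypothesis this is bounded directly by $\|\nabla u\|_\b\|\nabla b\|_{2\b/(\b-1)}^2$ and absorbed exactly as $N_1$ was, giving $\epsilon\|\Delta b\|_2^2+C\|\nabla u\|_\b^{\,\a}\|\nabla b\|_2^2$. Under the first hypothesis, where only $u$ (not $\nabla u$) is controlled, I would integrate by parts once more in each surviving term, transferring the derivative from the $u$-factor onto $b$ (again invoking $\nabla\cdot b=0$ to kill the stray $\partial_j\partial_k b_j=0$); this recasts both as integrals of the form $\int_{\R^3} u\cdot(\nabla b)(\nabla^2 b)\,\mathrm{d}x$, estimable by $\|u\|_\b\|\nabla b\|_{2\b/(\b-2)}\|\Delta b\|_2$ and absorbed as $\epsilon\|\Delta b\|_2^2+C\|u\|_\b^{\,\a}\|\nabla b\|_2^2$.

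Summing all contributions gives a Gr\"onwall inequality
\begin{equation}\notag
\frac{\mathrm{d}}{\mathrm{d}t}\big(\|\nabla u\|_2^2+\|\nabla b\|_2^2\big)+\tfrac{\nu}{2}\|\Delta u\|_2^2+\tfrac{\mu}{2}\|\Delta b\|_2^2\leq g(t)\big(\|\nabla u\|_2^2+\|\nabla b\|_2^2\big),
\end{equation}
with $g(t)=C\|u\|_\b^{\,\a}$ in the first case and $g(t)=C\|\nabla u\|_\b^{\,\a}$ in the second, each in $L^1(0,T)$ precisely by the assumed integrability and the matched exponents. Gr\"onwall's lemma then bounds $\|\nabla u\|_2^2+\|\nabla b\|_2^2$ uniformly on $[0,T]$ and, upon time integration, places $\|\Delta u\|_2^2+\|\Delta b\|_2^2$ in $L^1(0,T)$, so $(u,b)\in L^\infty(0,T;H^1)\cap L^2(0,T;H^2)$; being a subcritical bound, it propagates all higher $H^s$ norms by the usual energy method, yielding regularity up to and beyond $T$. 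I expect the one genuine difficulty to be the bookkeeping of the cancellation in $N_2+N_4$: arranging the integrations by parts and the divergence-free reductions so that the surviving terms are controlled by $u$ (resp.\ $\nabla u$) \emph{alone}, rather than by the magnetic field, is exactly what makes the criterion depend on the velocity only, in line with the dominant role of $u$ in the $u$--$b$ interaction.
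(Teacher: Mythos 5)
The paper itself offers no proof of this statement---it is quoted as a summary of the results of He--Xin \cite{HX} and He--Wang \cite{HW}---so there is nothing internal to compare against; measured against those original arguments, your proposal is the standard proof and is essentially correct. The decisive point, namely that after one integration by parts the second-derivative contributions of $N_2$ and $N_4$ combine into $\int_{\R^3} b_j\,\partial_j(\partial_k b_i\,\partial_k u_i)\,\mathrm{d}x$ and vanish by $\nabla\cdot b=0$, leaving a trilinear expression with exactly one factor of $\nabla u$ and two of $\nabla b$, is precisely the mechanism that allows the criterion to be stated in terms of the velocity alone, and your computation of it checks out, as do the Gagliardo--Nirenberg and Young exponent counts in both cases ($\alpha=\frac{2\beta}{\beta-3}$ and $\alpha=\frac{2\beta}{2\beta-3}$ respectively). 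A few small points to tidy up: the endpoint $\beta=\infty$ (so $\alpha=2$) must be handled without interpolation, directly via $\|u\|_\infty\|\nabla b\|_2\|\Delta b\|_2\leq\epsilon\|\Delta b\|_2^2+C\|u\|_\infty^2\|\nabla b\|_2^2$; in the first-hypothesis treatment of $N_2+N_4$ the second integration by parts produces full second derivatives $\|\nabla^2 b\|_2$ rather than $\|\Delta b\|_2$, which you should reduce to the dissipation by Plancherel; and necessity for finite $\beta$ does not follow from boundedness alone on $\R^3$ but from persistence of $H^s$ regularity together with Sobolev embedding.
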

The above result was later extended to Besov spaces by Chen, Miao and Zhang \cite{CMZ1, CMZ2}, who also obtained a Beale-Kato-Majda type condition in terms of $u$ only. The following theorems summarize their results.
\begin{Theorem} Let $(u_0,b_0) \in L^2_\s.$ Suppose that $(u,b)$ is a weak solution to system (\ref{mhd}) on $[0,T)$. Then $(u,b)$ is regular on $[0,T]$ iff
$$ u \in L^q(0,T; B^s_{p, \infty}), \ \frac{2}{q}+\frac{3}{p}=1+s,\ \frac{3}{1+s} < p \leq \infty.
$$
\end{Theorem}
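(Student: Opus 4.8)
The plan is to establish the substantive ``if'' direction: assuming $u \in L^q(0,T; B^s_{p,\infty})$ with $\frac{2}{q}+\frac{3}{p}=1+s$, I will show that $(u,b)$ remains regular up to and including $T$. The converse is immediate, since a solution regular on the compact interval $[0,T]$ is bounded in every Besov norm and hence trivially lies in $L^q(0,T;B^s_{p,\infty})$. By the local well-posedness and standard continuation theory for (\ref{mhd}), it suffices to produce an a priori bound, uniform on $[0,T)$, for the subcritical quantity $Y(t):=\|u(t)\|_{H^m}^2+\|b(t)\|_{H^m}^2$ with a fixed $m>\frac{5}{2}$; such a bound rules out blow-up at $T$ and yields regularity on $[0,T]$.

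First I would apply $\Delta_q$ to both equations of (\ref{mhd}), test the velocity equation against $\Delta_q u$ and the magnetic equation against $\Delta_q b$, and add, so that the pressure drops out by $\nabla\cdot u=0$:
\begin{equation}\notag
\begin{split}
\frac{1}{2}\frac{\mathrm{d}}{\mathrm{d}t}\big(\|\Delta_q u\|_2^2+\|\Delta_q b\|_2^2\big)+\nu\|\lambda_q\Delta_q u\|_2^2+\mu\|\lambda_q\Delta_q b\|_2^2=\mathcal{N}_q.
\end{split}
\end{equation}
The decisive structural fact is the MHD cancellation: the coupling contributions $\int_{\R^3}\Delta_q(b\cdot\nabla b)\cdot\Delta_q u\,\mathrm{d}x$ and $\int_{\R^3}\Delta_q(b\cdot\nabla u)\cdot\Delta_q b\,\mathrm{d}x$, after integration by parts using $\nabla\cdot b=0$, cancel at leading order, since $\int_{\R^3}b_{\leq q-2}\cdot\nabla(\Delta_q b\cdot\Delta_q u)\,\mathrm{d}x=0$. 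What survives is of commutator type. This is precisely what allows the whole estimate to be closed in terms of $u$ alone, with no norm of $b$ appearing except inside dissipation-controlled combinations.

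Next I would decompose each surviving interaction by Bony's paraproduct into low-high, high-low and high-high pieces, exactly as in the NSE analysis of Section \ref{mathe}, and rewrite every transport piece through the commutator $[\Delta_q,w_{\leq p-2}\cdot\nabla]$ with $w\in\{u,b\}$. The point to exploit is that every remaining nonlinear term carries at least one factor of the velocity: in the diagonal transport terms it is the advecting $u$, and in the surviving magnetic remainders it is either the tested field (from $b\cdot\nabla b$) or the transported field (from $b\cdot\nabla u$). That single velocity factor is always placed in $B^s_{p,\infty}$ via $\|u_p\|_p\leq \lambda_p^{-s}\|u\|_{B^s_{p,\infty}}$, while the remaining factors (whether $u$ or $b$) are measured in Sobolev norms. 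Controlling the commutators by Lemma \ref{cmm} and the paraproduct pieces by H\"older's and Bernstein's inequalities, then summing against the weight $\lambda_q^{2m}$ and using Young's inequality for series, produces
\begin{equation}\notag
\frac{\mathrm{d}}{\mathrm{d}t}Y(t)+\nu\|u\|_{H^{m+1}}^2+\mu\|b\|_{H^{m+1}}^2\lesssim \|u\|_{B^s_{p,\infty}}\,\big(\|u\|_{H^{m+1}}+\|b\|_{H^{m+1}}\big)^{2-\frac{2}{q}}\,Y(t)^{\frac{1}{q}},
\end{equation}
where the interpolation exponent is dictated by the scaling relation: the surplus derivative count $1-s+\frac{3}{p}=2-\frac{2}{q}$ is exactly the power of the dissipation factor. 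Applying Young's inequality with conjugate exponents $\frac{q}{q-1}$ and $q$ absorbs the top-order factors into the dissipation on the left and leaves the clean differential inequality
\begin{equation}\notag
\frac{\mathrm{d}}{\mathrm{d}t}Y(t)\lesssim \|u(t)\|_{B^s_{p,\infty}}^{q}\,Y(t).
\end{equation}

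Finally, Gr\"onwall's inequality together with the hypothesis $\int_0^T\|u(t)\|_{B^s_{p,\infty}}^{q}\,\mathrm{d}t<\infty$ bounds $Y$ uniformly on $[0,T)$, extending the solution past $T$ and completing the proof. The main obstacle is the estimation of the surviving magnetic remainders, in which the magnetic field appears quadratically: here the only available control comes through the single velocity factor, so one must use the precise commutator structure of Lemma \ref{cmm} to shift a derivative onto the low-frequency mode and then feed all surplus derivatives on the $b$-factors into the dissipation by interpolation. Ensuring that this interpolation closes in the borderline case $p=\infty$, $s=0$ (where no spare integrability remains), and that the resulting power of $\|u\|_{B^s_{p,\infty}}$ is exactly the integrable exponent $q$ rather than something larger, is the crux of the argument and is precisely where the critical scaling $\frac{2}{q}+\frac{3}{p}=1+s$ is used.
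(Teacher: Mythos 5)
The paper does not actually prove this theorem: it is quoted as a result of Chen, Miao and Zhang \cite{CMZ1, CMZ2} in this survey, so there is no in-paper argument to compare yours against. Your sketch follows the standard route of the original references and is consistent with the toolkit the paper does develop (Bony's paraproduct, the commutator estimate of Lemma \ref{cmm}, and the frequency-localized energy identity underlying Theorems \ref{csh}, \ref{chd} and \ref{cd1}). The two structural points you isolate are the right ones and are correct: the cancellation $\int_{\R^3} b_{\leq q-2}\cdot\nabla(\Delta_q b\cdot\Delta_q u)\,\mathrm{d}x=0$ removes the only leading contributions that could not be closed through $u$, and afterwards every surviving paraproduct or commutator piece carries at least one velocity factor, which is the factor you place in $B^s_{p,\infty}$. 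Your derivative bookkeeping $1-s+\tfrac{3}{p}=2-\tfrac{2}{q}$ and the Young--Gr\"onwall closure with conjugate exponents $\tfrac{q}{q-1}$ and $q$ are exactly where the scaling relation enters, and they check out. Two points would need care in a complete write-up: (i) the high-high pieces $\sum_{p\geq q-2}\int_{\R^3}\Delta_q(b_p\cdot\nabla \tilde b_p)\cdot u_q\,\mathrm{d}x$ and the high-low pieces $\Delta_q(b_p\cdot\nabla b_{\leq p-2})$ are not of commutator type and must be summed using Bernstein and the strong dissipation at high frequency, which your sketch gestures at but does not carry out; (ii) the statement concerns a weak solution, so transferring the a priori bound obtained for a smooth solution to the given weak solution requires a weak-strong uniqueness step that you do not mention. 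Neither is a conceptual gap.
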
 
\begin{Theorem}
Let $(u_0, b_0) \in H^s, s > \frac{1}{2}.$ Suppose that $(u,b) \in C(0,T; H^s) \cap C^1(0,T; H^{s+1})$ is a regular solution to system (\ref{mhd}) on $[0,T).$ Then $(u,b)$ can be extended beyond time $T$ iff there exists a constant $M_0$ such that
$$\lim_{\e \to 0} \sup_{q \geq -1}\int_{T-\e}^T \|\Delta_q(\nabla \times u)\|_\infty \mathrm{d}t <M_0.$$
\end{Theorem}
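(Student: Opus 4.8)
\textbf{Necessity} is elementary: if $(u,b)$ extends to a regular solution past $T$, then $\nabla\times u$ is bounded on a neighborhood of $T$, and since the projections $\Delta_q$ are bounded on $L^\infty$ uniformly in $q$ one has $\sup_{q\geq-1}\|\Delta_q(\nabla\times u)(t)\|_\infty\lesssim\|\nabla\times u(t)\|_\infty$; hence $\sup_{q\geq-1}\int_{T-\e}^T\|\Delta_q(\nabla\times u)\|_\infty\,\mathrm{d}t\lesssim\e\sup_{[T-\e,T]}\|\nabla\times u\|_\infty\to0$ as $\e\to0$, and the stated bound holds for any positive $M_0$. The substance is \emph{sufficiency}, which I would prove by the wavenumber-splitting energy method used for Theorem \ref{chd}, the novelty being that the logarithmically critical factor must be forced to depend on $u$ alone.

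Writing $E(t):=\|u(t)\|_{H^s}^2+\|b(t)\|_{H^s}^2$, I project system (\ref{mhd}) by $\Delta_q$, pair the momentum equation with $\lambda_q^{2s}u_q$ and the induction equation with $\lambda_q^{2s}b_q$, and sum over $q$ to get
\begin{equation}\notag
\frac{1}{2}\frac{\mathrm{d}}{\mathrm{d}t}E+\nu\|u\|_{H^{s+1}}^2+\mu\|b\|_{H^{s+1}}^2=I_1+I_2+I_3+I_4,
\end{equation}
where $I_1,I_3$ gather the terms transported by $u$ (from $u\cdot\nabla u$ against $u_q$ and $u\cdot\nabla b$ against $b_q$) and $I_2,I_4$ gather the magnetic coupling (from $b\cdot\nabla b$ against $u_q$ and $b\cdot\nabla u$ against $b_q$). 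Applying Bony's paraproduct to each term and rewriting the low--high interactions through the commutator $[\Delta_q,\,\cdot\,\nabla]$ as in the NSE analysis, the transporting field in $I_1$ and $I_3$ is $u_{\leq p-2}$: the condition $\nabla\cdot u=0$ annihilates the leading non-commutator piece (the analogue of $I_{22}$ above), and Lemma \ref{cmm} bounds the commutator remainder by $\big(\sum_{p'\leq Q}\lambda_{p'}\|u_{p'}\|_\infty\big)E$, a factor built from $u$ alone. The high--low and high--high pieces of $I_1$ and $I_3$ have output at frequency $\gtrsim\lambda_Q$ and are absorbed into $\tfrac{\nu}{4}\|u\|_{H^{s+1}}^2+\tfrac{\mu}{4}\|b\|_{H^{s+1}}^2$ after splitting at $Q(t)=\log_2\Lambda(t)$, where $\Lambda(t)$ is the MHD dissipation wavenumber defined as in Theorem \ref{csh} with the smallness of $\lambda_p^{-1+3/r}\|u_p\|_r$ replaced by that of $\lambda_p^{-1+3/r}(\|u_p\|_r+\|b_p\|_r)$, so that both $u_p$ and $b_p$ are dissipation-dominated for $p>Q(t)$.

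The crux is $I_2+I_4$, whose transporting field is $b$. Summing the two terms and using $\nabla\cdot b=0$, the leading non-commutator contribution collapses into the localized divergence $\int b_{\leq p-2}\cdot\nabla\big(\Delta_q b_p\cdot u_q+\Delta_q u_p\cdot b_q\big)$, which vanishes up to the frequency mismatch $|p-q|\leq2$ (a remainder handled exactly like $I_{23}$ above); thus the dangerous magnetic transport cancels. What resists this cancellation is the commutator remainder, which by Lemma \ref{cmm} is controlled by $\big(\sum_{p'\leq Q}\lambda_{p'}\|b_{p'}\|_\infty\big)E$ and still carries the magnetic low modes. Removing this residual $b$-factor is the main obstacle: it bears no surplus derivative and so cannot be absorbed by dissipation, and it is \emph{not} eliminated by any further cancellation. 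Instead I would exploit that the induction equation depends \emph{linearly} on $b$: combining the induction and momentum estimates, the stretching interaction $b\cdot\nabla u$ contributes, after paraproduct and Young's inequality, one piece absorbed by $\tfrac{\nu}{4}\|u\|_{H^{s+1}}^2$ and one of the form $\big(\sum_{p'\leq Q}\lambda_{p'}\|u_{p'}\|_\infty\big)\|b\|_{H^s}^2$, so that the low-mode magnetic factor $\sum_{p'\leq Q}\lambda_{p'}\|b_{p'}\|_\infty$ is itself slaved, through the linear magnetic dynamics driven by $u$, to the velocity data. Making this slaving rigorous is the genuine difficulty: since $s>\tfrac12$ lies below the threshold $H^s\hookrightarrow L^\infty$, the propagation of velocity-only control through the magnetic coupling is unavailable from classical Moser or Kato--Ponce estimates and must be carried out frequency by frequency via Lemma \ref{cmm}.

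Collecting the estimates yields a Gr\"onwall inequality
\begin{equation}\notag
\frac{\mathrm{d}}{\mathrm{d}t}E\lesssim\Big(\sum_{q\leq Q(t)}\lambda_q\|u_q\|_\infty\Big)E\lesssim\|(\nabla\times u)_{\leq Q(t)}\|_{B^0_{\infty,\infty}}\big(1+\log E\big)E,
\end{equation}
where I used $\lambda_q\|u_q\|_\infty\sim\|\Delta_q(\nabla\times u)\|_\infty$ together with $Q(t)\lesssim1+\log E$, the latter following from the definition of $\Lambda(t)$ and Bernstein's inequality exactly as in the NSE case. To finish under the hypothesis $\lim_{\e\to0}\sup_{q\geq-1}\int_{T-\e}^T\|\Delta_q(\nabla\times u)\|_\infty\,\mathrm{d}t<M_0$ with $M_0$ below a threshold fixed by $\nu,\mu,s$, I would reproduce the threshold-index argument of Theorem \ref{chd}: fix a small $\e$ and introduce $q^*$ separating the finitely many lowest frequencies, whose contribution to $\int_{T-\e}^T\sum_{q\leq Q}\lambda_q\|u_q\|_\infty\,\mathrm{d}t$ is bounded by $Cq^*\lambda_{q^*}^{5/2}\|(u_0,b_0)\|_2\,\e$, from the tail $q^*<q\leq Q$, whose contribution is at most $\bar Q(t)M_0$ with $\bar Q(t)=\sup_{[T-\e,t]}Q$; inserting $\bar Q\lesssim1+\log E^{1/2}$ and choosing $M_0,\e$ small enough absorbs the logarithm and keeps $E$ bounded as $t\to T^-$. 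Hence $(u,b)\in L^\infty(0,T;H^s)\cap L^2(0,T;H^{s+1})$ and the solution continues past $T$.
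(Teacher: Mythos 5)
First, a point of comparison: the paper does not prove this statement at all --- it is quoted from Chen, Miao and Zhang \cite{CMZ1,CMZ2} as part of a literature summary --- so the only in-paper material your sketch can be measured against is the wavenumber-splitting outline for Theorems \ref{csh}--\ref{chd} and the remarks surrounding Theorem \ref{cd1}. Transplanting that outline to system (\ref{mhd}) is the right instinct, and your necessity argument and your treatment of the $u$-transported terms $I_1,I_3$ are fine. But the sufficiency direction is not established, and the gap sits exactly where you yourself flag it.

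After Bony's decomposition and the commutator rewriting, the coupling terms $-b\cdot\nabla b$ (tested against $u_q$) and $-b\cdot\nabla u$ (tested against $b_q$) leave behind, besides the cancelling divergence, (i) commutator remainders which Lemma \ref{cmm} bounds by $\big(\sum_{p'\leq p-2}\lambda_{p'}\|b_{p'}\|_\infty\big)\|b_p\|_2\|u_q\|_2$ and $\big(\sum_{p'\leq p-2}\lambda_{p'}\|b_{p'}\|_\infty\big)\|u_p\|_2\|b_q\|_2$, and (ii) the paraproduct pieces $b_p\cdot\nabla b_{\leq p-2}$ and $b_p\cdot\nabla u_{\leq p-2}$ in which the low-frequency factor of $b$ is differentiated or multiplies the output. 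Every one of these carries the quantity $\sum_{p'\leq Q}\lambda_{p'}\|b_{p'}\|_\infty$, which a hypothesis phrased in terms of $\nabla\times u$ alone does not control. Your final Gr\"onwall inequality $\frac{\mathrm{d}}{\mathrm{d}t}E\lesssim\big(\sum_{q\leq Q}\lambda_q\|u_q\|_\infty\big)E$ silently discards these contributions, even though the preceding paragraph concedes they are not removed by any further cancellation; so the displayed inequality does not follow from your estimates. The proposed repair --- that the low modes of $b$ are ``slaved'' to $u$ because the induction equation is linear in $b$ --- does not work as stated: linearity yields at best $\|b(t)\|_{X}\lesssim\|b_0\|_{X}\exp\big(\int_0^t\|\nabla u\|_{\infty}\,\mathrm{d}\tau\big)$, i.e.\ it presupposes $L^1_tL^\infty_x$ control of the \emph{full} $\nabla u$, which is precisely what the theorem avoids assuming. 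The step that actually closes the argument is an absorption, not a cancellation: one trades the $b$-factors via Bernstein, $\lambda_{p'}\|b_{p'}\|_\infty\lesssim\lambda_{p'}^{5/2}\|b_{p'}\|_2\lesssim\lambda_{p'}^{3/2-s}\|b\|_{H^{s+1}}$, for excess powers of $\lambda$ that are then swallowed by the magnetic dissipation $\mu\|b\|_{H^{s+1}}^2$ in combination with the smallness of the high modes of $u$ encoded in $\Lambda(t)$; this is exactly what forces the restriction $r\in[2,6]$ in Theorem \ref{cd1} and what the paper alludes to in saying the MHD system has ``more terms to control and fewer cancellations to utilize.'' Until that absorption (or the logarithmic Besov interpolation used in \cite{CMZ1,CMZ2}) is carried out, the proof is incomplete at its central point.
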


Finally, by the same wavenumber splitting approach as that applied to the NSE, Cheskidov and Dai \cite{CD} proved a regularity criterion in terms of the low modes of $u,$ which poses an improvement to all the previous results listed above.
\begin{Theorem} \label{cd1}
Let $(u,b)$ be a weak solution to system (\ref{mhd}) that is regular on $[0,T).$ Then $(u,b)$ is regular on $[0,T]$ iff
$$\limsup_{q \to \infty}\int_\frac{T}{2}^T 1_{q \leq Q(t)}\lambda_q\|u_q\|_\infty \mathrm{d}t <c_r, \  2 \leq r \leq 6.$$ With $Q(t)=\log_2 \Lambda(t),$ where the wavenumber $\Lambda(t)$ is defined as $$\Lambda(t):= \min\{ \lambda_q: \lambda_p^{-1+\frac{3}{r}}\|u_p(t)\|_r <c_r\nu, \ 2 \leq r \leq 6, \forall p >q, q \in \mathbb{N}\}.$$
\end{Theorem}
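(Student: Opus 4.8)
The plan is to follow the wavenumber splitting scheme used for Theorem~\ref{chd}, treating the pair $(u,b)$ in a subcritical Sobolev space and extracting from the magnetic coupling the same transport cancellations that tame the convective term in the NSE. The forward direction (regularity on $[0,T]$ implies the criterion) is immediate, since a solution regular up to and including $T$ has $\lambda_q\|u_q(t)\|_\infty$ rapidly decaying in $q$ on $[\frac{T}{2},T]$, so the localized integral has $\limsup_{q\to\infty}$ equal to zero. For the substantive converse I would fix $s=\frac{1}{2}+\e$ and form the combined higher-order energy: take the Littlewood--Paley projection of each equation in (\ref{mhd}), pair the $u$-equation with $\lambda_q^{2s}u_q$ and the $b$-equation with $\lambda_q^{2s}b_q$, and sum over $q$. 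This produces the two dissipation terms $-\nu\sum_q\lambda_q^{2s+2}\|u_q\|_2^2$ and $-\mu\sum_q\lambda_q^{2s+2}\|b_q\|_2^2$ together with four nonlinear contributions: $I_1$ from $u\cdot\nabla u$ tested against $u_q$, $I_2$ from $-b\cdot\nabla b$ tested against $u_q$, $I_3$ from $u\cdot\nabla b$ tested against $b_q$, and $I_4$ from $-b\cdot\nabla u$ tested against $b_q$.

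Next I would dispose of the terms whose advecting field is the velocity. The term $I_1$ is exactly the NSE contribution and is handled verbatim as in the proof of Theorem~\ref{chd}: Bony's decomposition, the commutator rewriting, the vanishing of the diagonal piece via $\nabla\cdot u_{\leq q-2}=0$, and the splitting at $Q(t)$ yield a low-modes part bounded by $\sum_{q\leq Q}\lambda_q\|u_q\|_\infty\,\|u\|_{H^s}^2$ and a high-modes part absorbed by $\nu\sum_q\lambda_q^{2s+2}\|u_q\|_2^2$. The term $I_3$ is the transport of $b$ by $u$; the identical Bony/commutator machinery applies, the diagonal part vanishes by $\nabla\cdot u_{\leq q-2}=0$, Lemma~\ref{cmm} produces commutators governed by $\sum_{p'\leq p-2}\lambda_{p'}\|u_{p'}\|_r$, and so $I_3$ is again controlled by the velocity low modes (for $p\leq Q$) and by the smallness $\lambda_p^{-1+3/r}\|u_p\|_r<c_r\nu$ of the high velocity modes. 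In $I_4$ the velocity is the differentiated field, so splitting at $Q$ turns the $\nabla u_p$ factor into $\lambda_p\|u_p\|_\infty$ for $p\leq Q$ and into a small high mode for $p>Q$, with the $b$ factors supplying the energy weights absorbed into $\mu\sum_q\lambda_q^{2s+2}\|b_q\|_2^2$.

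The hard part will be $I_2$, the Lorentz force $b\cdot\nabla b$ tested against $u_q$: unlike every other term it carries no velocity derivative and only a single undifferentiated velocity mode $u_q$, so the wavenumber $\Lambda(t)$---which measures only $u$---cannot be applied directly. The key step is to integrate by parts and use $\nabla\cdot b=0$ to move the derivative off $b$ and onto $u_q$, rewriting the low-frequency part ($q\leq Q$) of $I_2$ as a sum of terms of the form $\int\Delta_q(b\otimes b):\nabla u_q$, each carrying the factor $\lambda_q\|u_q\|_\infty$ appearing in the criterion. The quadratic magnetic factor $\Delta_q(b\otimes b)$ is then estimated by Bernstein together with the Sobolev embedding $\dot H^1\hookrightarrow L^6$ in three dimensions and interpolation against the energy bound for $b$; it is precisely this step that forces the range $2\leq r\leq 6$. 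For the high-frequency part $q>Q$ one instead keeps the derivative on $b$, uses the smallness of $u_q$, and absorbs the result into the combined dissipation, where one must track the interplay of $\nu$ and $\mu$ so that the small constant from the velocity smallness dominates $\mu$ as well as $\nu$ (this is folded into the choice of $c_r$).

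Collecting the estimates yields a Gr\"onwall inequality of the same shape as (\ref{grn}), namely $\frac{\mathrm{d}}{\mathrm{d}t}\sum_p\lambda_p^{2s}(\|u_p\|_2^2+\|b_p\|_2^2)\leq C(\nu,\mu,r,s)\sum_{q\leq Q(t)}\lambda_q\|u_q\|_\infty\sum_p\lambda_p^{2s}(\|u_p\|_2^2+\|b_p\|_2^2)$. From here I would reproduce the endgame of Theorem~\ref{chd} with $(u,b)$ in place of $u$: introduce the threshold $q^*$ with $c_r=\e\ln 2/C(\nu,\mu,r,s)$, split $\sum_{q\leq Q}\lambda_q\|u_q\|_\infty$ into $f_{\leq q^*}+f_{>q^*}$, bound the time integral of $f_{>q^*}$ by $\bar Q(t)c_r$, and use the definition of $\Lambda(t)$ (restricted to $2\leq r\leq 6$) with Bernstein to obtain $2^{\e\bar Q(t)}\lesssim\nu^{-1}\|(u,b)\|_{H^s}$. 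Substituting this back makes the exponent in the integrated estimate sublinear in $\sup_\tau\|(u,b)(\tau)\|_{H^s}$, so the right-hand side is finite and $(u,b)\in L^\infty(\frac{T}{2},T;H^s)$, giving regularity on $[0,T]$. The principal obstacle throughout is the Lorentz term $I_2$ and the bookkeeping of the two distinct dissipation coefficients; once the integration-by-parts reduction and the $r\leq 6$ Sobolev estimate are in place, the remaining structure is parallel to the NSE argument already presented.
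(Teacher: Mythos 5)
Your overall architecture---the combined higher-order energy for $(u,b)$, Bony/commutator decomposition of the four nonlinear terms, the splitting at $Q(t)$, and the $q^*$ endgame imported from Theorem (\ref{chd})---is indeed the route the paper indicates, and your identification of the $L^2(0,T;H^1)\hookrightarrow L^2(0,T;L^6)$ control of $b$ as the source of the restriction $r\in[2,6]$ is correct. There is, however, a genuine gap in your treatment of the magnetic coupling terms $I_2$ and $I_4$: you propose to handle the Lorentz force $I_2$ by integrating by parts \emph{in isolation}, and $I_4$ by reading $\nabla u_p$ as $\lambda_p\|u_p\|_\infty$ with ``the $b$ factors supplying the energy weights.'' After the commutator rewriting, each of these terms contains a diagonal piece that does not vanish on its own, schematically
\begin{equation*}
\lambda_q^{2s}\int_{\R^3} b_{\leq q-2}\cdot\nabla b_q\cdot u_q\,\mathrm{d}x
\qquad\text{and}\qquad
\lambda_q^{2s}\int_{\R^3} b_{\leq q-2}\cdot\nabla u_q\cdot b_q\,\mathrm{d}x .
\end{equation*}
Estimated separately---whether you leave the derivative on $b_q$, move it onto $u_q$, or distribute Lebesgue exponents within $2\leq r\leq 6$---the factor $b_{\leq q-2}$ carries no smallness (the wavenumber $\Lambda(t)$ sees only $u$) and no $\lambda_q^{-s}$-type decay, so the best available bound has the shape $\lambda_q\|u_q\|_\infty\,\lambda_q^{2s}\|b_{\leq q-2}\|_2\|b_q\|_2$; writing $\|b_{\leq q-2}\|_2\leq\sum_{p\leq q-2}\lambda_p^{-s}\big(\lambda_p^{s}\|b_p\|_2\big)$ leaves an uncompensated growing weight $\lambda_{q-p}^{s}$, and the resulting expression is \emph{not} dominated by $\sum_{q\leq Q}\lambda_q\|u_q\|_\infty\sum_p\lambda_p^{2s}\big(\|u_p\|_2^2+\|b_p\|_2^2\big)$, so the Gr\"onwall inequality of the form (\ref{grn}) does not close. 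The missing idea is that these two diagonal pieces must be added \emph{before} estimating, whereupon
$\int_{\R^3} b_{\leq q-2}\cdot\nabla(u_q\cdot b_q)\,\mathrm{d}x=0$
by $\nabla\cdot b=0$: this is the exact MHD analogue of the cancellation $I_{22}=0$ displayed for the NSE, except that here it is shared between the $u$-equation and the $b$-equation rather than internal to a single term. Once you keep $I_2$ and $I_4$ coupled so that this cancellation is available, and organize the surviving off-diagonal pieces so that each carries either a commutator factor controlled by Lemma (\ref{cmm}) or a difference $b_{\leq p-2}-b_{\leq q-2}$, the rest of your plan---the $I_1$ and $I_3$ estimates, absorption of the high modes into the dissipation with constant $c_r\min\{\nu,\mu\}$, and the $q^*$ argument---goes through as you describe.
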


Proof of theorem (\ref{cd1}) is essentially along the same line as that of theorem (\ref{chd}). As the energy for system (\ref{mhd}) is $\|u(t)\|_2^2+\|b(t)\|_2^2,$ analyzing the projected equations in higher order energy spaces leads to a Gr\"onwall-type inequality. Compared to the NSE, the MHD system has more terms to control and fewer cancellations to utilize, so the above result is somehow surprising. While the magnetic field $b$ didn't appear in the regularity condition, its presence did restrict the choice of index $r,$ reducing the range of $r$ to $[2, 6]$ from $[2,\infty]$ in the case of the NSE. 

\subsection{The Hall-MHD system}
The following Hall-MHD system differs from the MHD system from the previous section by the extra Hall term $-\nabla \times((\nabla \times b) \times b)$ resulting from replacing the resistive Ohm's law by a more generalized Ohm's law which takes the Hall effect into account.
\begin{equation}\label{hlmhd}
\begin{cases}
u_t +u \cdot \nabla u -b \cdot \nabla b +\nabla p = \nu \Delta u,\\
b_t +u \cdot \nabla b -b \cdot \nabla u-\nabla \times((\nabla \times b) \times b) = \mu \Delta b,\\
\nabla \cdot u =0, \ \nabla \cdot b=0,
\end{cases}
\end{equation}
with $x \in \R^3$ and $t \geq 0.$ The Hall-MHD system accurately models plasma with large magnetic gradients and is useful in the study of the magnetic reconnection process. Leray-Hopf type weak solutions to system (\ref{hlmhd}) satisfy the same energy inequality as that for system (\ref{mhd}), as the additional Hall term vanishes at energy level. Yet, the Hall term annuls the scaling property of the MHD system and renders the equation for the magnetic field $b$ nonlinear. As a result, the low modes regularity criterion for system (\ref{hlmhd}), due to Dai \cite{D1}, has to involve both $u$ and $b,$ in contrast to that for system (\ref{mhd}).
\begin{Theorem}\label{mmd} Let $(u,b)$ be a weak solution to system (\ref{hlmhd}) on $[0,T].$ Assume that $(u,b)$ is regular on $[0,T),$ that is, $(u,b)\in C(0,T; H^s(\R^3)), s > \frac{5}{2}.$ If
$$\int_0^T \|u_{\leq Q_u(t)}(t)\|_{B^1_{\infty, \infty}} +\Lambda_b(t)\|b_{\leq Q_b(t)}(t)\|_{B^0_{\infty, \infty}} \mathrm{d}t < \infty,$$
then $(u,b)$ is regular on $[0,T].$ 
\end{Theorem}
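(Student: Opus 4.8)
The plan is to follow the wavenumber splitting scheme already used for Theorems \ref{chd} and \ref{cd1}, now carrying along two independent cutoffs $Q_u(t)=\log_2\Lambda_u(t)$ and $Q_b(t)=\log_2\Lambda_b(t)$, since the Hall term forces the magnetic field to be tracked at one order higher than the velocity. First I would apply $\Delta_q$ to each equation of (\ref{hlmhd}), pair the velocity equation with $\lambda_q^{2s}u_q$ and the induction equation with $\lambda_q^{2s}b_q$ in $L^2$, and sum over $q\ge -1$. Using that $u,b$ are divergence free and that $s>\frac52$, this produces, in analogy with (\ref{eng}),
\begin{equation}\notag
\frac12\frac{d}{dt}\sum_{q\ge -1}\lambda_q^{2s}\big(\|u_q\|_2^2+\|b_q\|_2^2\big)\le -\nu\sum_{q\ge -1}\lambda_q^{2s+2}\|u_q\|_2^2-\mu\sum_{q\ge -1}\lambda_q^{2s+2}\|b_q\|_2^2+\mathcal N,
\end{equation}
where $\mathcal N$ collects the flux contributions of the convective term $u\cdot\nabla u$, the magnetic tension $b\cdot\nabla b$, the induction coupling $u\cdot\nabla b-b\cdot\nabla u$, and the Hall term $-\nabla\times((\nabla\times b)\times b)$.

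The MHD portion of $\mathcal N$ I would treat exactly as in the proof of Theorem \ref{cd1}: decompose each product by Bony's paraproduct, rewrite the transport pieces through the commutator $[\Delta_q,u_{\le p-2}\cdot\nabla]$ and its magnetic analogue, and use $\nabla\cdot u=\nabla\cdot b=0$ to annihilate the diagonal term exactly as $I_{22}$ was annihilated above. Splitting every resulting sum at the cutoffs $Q_u$ and $Q_b$, the low-mode pieces are bounded, after invoking Lemma \ref{cmm} and Bernstein's inequality, by $\|u_{\le Q_u}\|_{B^1_{\infty,\infty}}$ (together with the corresponding magnetic factors) times the $H^s$ energy, while the high-mode pieces are absorbed into the dissipation by the very definitions of $\Lambda_u$ and $\Lambda_b$ through the smallness constant $c_r$. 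The index restriction $2\le r\le 6$ familiar from the MHD system will reappear here, now further constrained by the Hall interaction.

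The Hall term is the genuine obstacle and it dictates the unusual shape of the hypothesis. After expanding $\nabla\times((\nabla\times b)\times b)$ and pairing against $\lambda_q^{2s}b_q$, one faces a quadratic expression in $b$ carrying two derivatives, so naively it is one order worse than any MHD flux; this is precisely why $s>\frac52$ rather than $s>\frac12$ is needed and why scaling-critical bookkeeping is unavailable. I would again apply Bony's decomposition to the two factors of $b$, convert the terms where a derivative falls on the high-frequency factor into commutator form, and exploit $\nabla\cdot b=0$ to cancel the most singular diagonal contribution. The surviving high-frequency part is then controlled by $\mu\sum\lambda_q^{2s+2}\|b_q\|_2^2$ using the definition of $\Lambda_b$. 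For the low-frequency part, one of the two derivatives is placed on $b_{\le Q_b}$ and estimated by Bernstein at the cutoff frequency, $\|b_{\le Q_b}\|_{B^1_{\infty,\infty}}\le\Lambda_b\|b_{\le Q_b}\|_{B^0_{\infty,\infty}}$, which is exactly what produces the factor $\Lambda_b(t)\|b_{\le Q_b(t)}\|_{B^0_{\infty,\infty}}$ in the statement rather than a plain $B^1$ norm.

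Collecting the low- and high-frequency estimates yields a Gr\"onwall inequality of the form
\begin{equation}\notag
\frac{d}{dt}\|(u,b)\|_{H^s}^2\lesssim\Big(\|u_{\le Q_u(t)}\|_{B^1_{\infty,\infty}}+\Lambda_b(t)\|b_{\le Q_b(t)}\|_{B^0_{\infty,\infty}}\Big)\big(1+\log\|(u,b)\|_{H^s}\big)\|(u,b)\|_{H^s}^2,
\end{equation}
where the logarithmic factor arises, as in the proof of Theorem \ref{csh}, by bounding the number of active low frequencies through $Q_u,Q_b\lesssim 1+\log\|(u,b)\|_{H^s}$, itself a consequence of the definitions of $\Lambda_u,\Lambda_b$ together with Bernstein's inequality. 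The assumed finiteness of the time integral then prevents $\|(u,b)(t)\|_{H^s}$ from blowing up as $t\to T$, so the solution stays in $C(0,T;H^s)$ and is regular on $[0,T]$. The hard part will be the bookkeeping of the Hall term's high-frequency interactions: because it supplies fewer cancellations than the MHD convective terms and is one derivative higher in order, one must verify that after the commutator and divergence-free reductions the residual is genuinely controlled by the magnetic dissipation alone, with a constant small enough to be absorbed, and it is here that the loss of scaling invariance makes the argument more delicate than in Theorems \ref{chd} and \ref{cd1}.
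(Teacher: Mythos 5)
Your overall scheme (frequency-localized $H^s$ energy estimate, Bony decomposition, splitting at the wavenumbers, Gr\"onwall) matches the paper's, and your heuristic for where the factor $\Lambda_b(t)\|b_{\le Q_b(t)}\|_{B^0_{\infty,\infty}}$ comes from is sound. The gap is in the one place you yourself flag as the crux: the Hall term. You propose to kill its most singular diagonal contribution by the divergence-free condition, converting derivative-on-high-frequency terms into the transport commutator of Lemma \ref{cmm}; but that commutator is not adapted to the structure $\nabla\times((\nabla\times b)\times b)$, and $\nabla\cdot b=0$ is not the cancellation mechanism here. What the paper does is first integrate by parts so that the Hall contribution reads
$$H=-\sum_{q\ge-1}\lambda_q^{2s}\int_{\R^3}\Delta_q\big((\nabla\times b)\times b\big)\cdot(\nabla\times b_q)\,\mathrm{d}x,$$
i.e.\ the pairing is against $\nabla\times b_q$ rather than $b_q$. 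After Bony's decomposition the dangerous diagonal piece becomes $\sum_q\lambda_q^{2s}\int b_{\le q-2}\times(\nabla\times b_q)\cdot(\nabla\times b_q)\,\mathrm{d}x$, which vanishes identically by the pointwise orthogonality $(a\times v)\cdot v=0$ --- a cross-product cancellation, not a divergence-free one, and one that is only visible after the integration by parts. The surviving off-diagonal pieces are handled with two \emph{new} commutators, $[\Delta_q,F\times\nabla\times]G=\Delta_q(F\times(\nabla\times G))-F\times(\nabla\times G_q)$ and $[\Delta_q,(\nabla\times F)\times]G=\Delta_q(\nabla\times(F\times G))-(\nabla\times F)\times G_q$, whose $L^r$ bounds by $\|\nabla F\|_\infty\|G\|_r$ (valid when $\nabla\cdot F=0$ and $G$ decays) are what trade the extra derivative carried by the Hall term for a gradient on the low-frequency factor. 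Without these two ingredients your plan stalls exactly at the step you call the hard part: the diagonal term carrying two derivatives at frequency $\lambda_q$ neither cancels via $\nabla\cdot b=0$ nor gets absorbed by the magnetic dissipation.

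A smaller inaccuracy: you predict that the index restriction $2\le r\le 6$ from the MHD case will reappear, but the wavenumbers for the Hall-MHD system are defined through $L^\infty$ norms of the Littlewood--Paley blocks (with, for $\Lambda_b$, a kernel weight $\lambda_{p-q}^\delta$, $\delta\ge s$), not through a family of $L^r$ conditions. The rest of your outline --- treating the MHD portion as in Theorem \ref{cd1}, the low/high splitting at $Q_u,Q_b$, and closing with a logarithmic Gr\"onwall argument --- is consistent with the paper's proof.
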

Here $2^{Q_u(t)} = \Lambda_u(t)$ and $2^{Q_b(t)} = \Lambda_b(t),$ with \begin{equation}\notag
\begin{split}
\Lambda_u(t):=& \min\{\lambda_q \geq -1: \lambda_p^{-1}\|u_p(t)\|_\infty < c_0\min\{\nu, \mu\}, \forall p >q\}, \\
\Lambda_b(t):=& \min\{\lambda_q \geq -1: \lambda_{p-q}^\delta \|b_p(t)\|_\infty < c_0\min\{\nu, \mu\}, \forall p >q\},\\
\end{split}
\end{equation} 
where $\lambda^\delta_{p-q}$ represents a kernel with $\delta \geq s.$

Theorem (\ref{mmd}) improves known regularity criteria for system (\ref{hlmhd}), notably the following Prodi-Serrin-Ladyzhenskaya type regularity criterion and its extension into the BMO space, proved by Chae and Lee \cite{CL}.
\begin{Theorem}
Suppose that $(u,b)$ is a weak solution to system (\ref{hlmhd}) on $[0,T]$ that is regular on $[0,T).$ If $(u,b)$ satisfy
\begin{equation}\notag
\begin{split}
& \begin{cases}
u \in L^q(0,T; L^p(\R^3)), \ \frac{3}{p}+\frac{2}{q}\leq 1, p \in (3 ,\infty],\\
\nabla b \in L^r(0,T; L^s(\R^3)), \ \frac{3}{s}+\frac{2}{r}\leq 1,p \in (3 ,\infty];\\
\end{cases}\\
&\text{or } u, \nabla b \in L^2(0,T; BMO(\R^3)),\\
\end{split}
\end{equation}
then $(u,b)$ is regular on $[0,T].$ 
\end{Theorem}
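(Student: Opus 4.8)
The plan is to replicate the wavenumber splitting scheme carried out in detail for the NSE, now adapted to the coupled pair $(u,b)$ and, decisively, to the Hall term. First I would apply $\Delta_q$ to both equations of system (\ref{hlmhd}), pair the velocity equation against $\lambda_q^{2s}u_q$ and the magnetic equation against $\lambda_q^{2s}b_q$ in $L^2$, and sum over $q\geq -1$. Since the energy is $\|u\|_2^2+\|b\|_2^2$, the pressure drops by $\nabla\cdot u=0$ and one obtains
\begin{equation}\notag
\frac{1}{2}\frac{d}{dt}\sum_{q\geq -1}\lambda_q^{2s}(\|u_q\|_2^2+\|b_q\|_2^2) \leq -\nu\sum_q\lambda_q^{2s+2}\|u_q\|_2^2 -\mu\sum_q\lambda_q^{2s+2}\|b_q\|_2^2 + \mathcal{N},
\end{equation}
where $\mathcal{N}$ collects the nonlinear contributions. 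I would organize $\mathcal{N}$ into the four MHD-type pieces coming from $u\cdot\nabla u$, $b\cdot\nabla b$, $u\cdot\nabla b$ and $b\cdot\nabla u$, together with the genuinely new Hall contribution $V:=\sum_q\lambda_q^{2s}\langle\Delta_q(\nabla\times((\nabla\times b)\times b)),b_q\rangle$.

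For the four MHD-type terms I would apply Bony's paraproduct decomposition and rewrite the transport pieces through the commutator $[\Delta_q,\,u_{\leq p-2}\cdot\nabla]$, exactly as in the NSE sketch. The divergence-free conditions $\nabla\cdot u=0$ and $\nabla\cdot b=0$ generate the same cancellations as in the MHD system, and the cross terms from $b\cdot\nabla b$ and $b\cdot\nabla u$ combine to eliminate the leading magnetic transport. After invoking Lemma \ref{cmm} and Bernstein's inequality, the low-frequency part of each term is bounded by $\|u_{\leq Q_u(t)}\|_{B^1_{\infty,\infty}}$ against the $H^s$ energy, while the high-frequency part ($p>Q_u$) is absorbed into the dissipation precisely through the defining inequality $\lambda_p^{-1}\|u_p\|_\infty<c_0\min\{\nu,\mu\}$; here the smallness of $c_0$ and the definition of $\Lambda_u(t)$ keep the high-modes coefficient strictly below one.

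The heart of the matter, and the step I expect to be the main obstacle, is the Hall term $V$. Using the vector identity
\begin{equation}\notag
\nabla\times((\nabla\times b)\times b)=(b\cdot\nabla)(\nabla\times b)-((\nabla\times b)\cdot\nabla)b,
\end{equation}
valid since both $b$ and $\nabla\times b$ are divergence free, I would recast $V$ as two transport-type expressions that are quadratic in $b$ but carry one extra derivative relative to the MHD nonlinearities. This extra derivative is exactly what destroys the scaling invariance and forces the separate wavenumber $\Lambda_b(t)$, whose definition uses a kernel $\lambda_{p-q}^\delta$ with $\delta\geq s$ in place of a single-frequency threshold. Splitting $V$ at $Q_b$, the low modes are controlled by the factor $\Lambda_b(t)\|b_{\leq Q_b(t)}\|_{B^0_{\infty,\infty}}$ against the energy, whereas for the high modes the kernel estimate $\lambda_{p-q}^\delta\|b_p\|_\infty<c_0\min\{\nu,\mu\}$ is what pays for the extra derivative out of the magnetic dissipation $\mu\lambda_q^{2s+2}\|b_q\|_2^2$ across the frequency gap between $p$ and $q$. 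The delicate point, absent in the NSE and MHD cases, is that the commutator does not outright remove the top-order derivative; instead one must exploit the summability of $\lambda_{p-q}^\delta$ over $p>q$ so that the aggregated high-modes contribution still stays below the available dissipation. The requirement $s>\frac{5}{2}$, heavier than the $s>\frac{1}{2}$ of the NSE, reflects exactly the loss of one derivative in the Hall nonlinearity together with the 3D embedding into $W^{1,\infty}$.

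With both the $u$- and $b$-nonlinearities controlled, the high modes absorbed into $-\nu\sum\lambda_q^{2s+2}\|u_q\|_2^2-\mu\sum\lambda_q^{2s+2}\|b_q\|_2^2$ and the low modes bounded by the integrand in the hypothesis, I would arrive at the Gr\"onwall-type inequality
\begin{equation}\notag
\frac{d}{dt}\|(u,b)\|_{H^s}^2 \lesssim \Big(\|u_{\leq Q_u(t)}\|_{B^1_{\infty,\infty}} + \Lambda_b(t)\|b_{\leq Q_b(t)}\|_{B^0_{\infty,\infty}}\Big)\|(u,b)\|_{H^s}^2.
\end{equation}
The assumed time-integrability of the coefficient then keeps $\|(u,b)\|_{H^s}$ finite up to and including $T$, which together with regularity on $[0,T)$ yields regularity on $[0,T]$ and completes the proof.
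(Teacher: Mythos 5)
Your proposal does not prove the stated theorem. The statement here is the Chae--Lee criterion: regularity under the Prodi--Serrin--Ladyzhenskaya conditions $u\in L^q(0,T;L^p)$, $\nabla b\in L^r(0,T;L^s)$ with $\frac{3}{p}+\frac{2}{q}\leq 1$, $\frac{3}{s}+\frac{2}{r}\leq 1$, or $u,\nabla b\in L^2(0,T;BMO)$. What you sketch is instead a proof of the \emph{low modes} criterion (Theorem \ref{mmd} of the paper, due to Dai): your final Gr\"onwall inequality has coefficient $\|u_{\leq Q_u(t)}\|_{B^1_{\infty,\infty}}+\Lambda_b(t)\|b_{\leq Q_b(t)}\|_{B^0_{\infty,\infty}}$, and you close by invoking ``the assumed time-integrability of the coefficient'' --- but that integrability is the hypothesis of Theorem \ref{mmd}, not of the theorem you were asked to prove; nowhere do you use the $L^qL^p$, $L^rL^s$ or $BMO$ assumptions. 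If your intent is to deduce the present statement as a corollary of the low-modes criterion, the decisive step is missing: you must show that $u\in L^qL^p$ with $\frac{3}{p}+\frac{2}{q}\leq 1$ forces $\int_0^T\|u_{\leq Q_u(t)}\|_{B^1_{\infty,\infty}}\,\mathrm{d}t<\infty$, and that the condition on $\nabla b$ forces $\int_0^T\Lambda_b(t)\|b_{\leq Q_b(t)}\|_{B^0_{\infty,\infty}}\,\mathrm{d}t<\infty$. That requires quantitative control of the wavenumbers $\Lambda_u(t)$ and $\Lambda_b(t)$ in terms of $\|u(t)\|_p$ and $\|\nabla b(t)\|_s$ (via Bernstein's inequality and the minimality in the definition of the wavenumbers), and it is precisely the content of the claim that the low-modes condition ``improves'' Chae--Lee; it is not automatic, it is where the scaling exponents $\frac{3}{p}+\frac{2}{q}\leq 1$ actually enter, and the factor $\Lambda_b(t)$ in front of the magnetic low modes makes it genuinely delicate. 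The $BMO$ alternative is not addressed at all and needs a separate (logarithmic/bilinear) estimate.

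For what it is worth, the paper does not reprove this theorem by wavenumber splitting; it is quoted from \cite{CL}, where it is obtained by direct energy estimates: differentiate the equations, pair with derivatives of $(u,b)$, and close with H\"older and Gagliardo--Nirenberg interpolation under the assumed space-time integrability, the $BMO$ case using the div-curl structure together with a logarithmic Sobolev-type inequality. If you want a self-contained proof of the statement as given, that elementary route is shorter and actually uses the hypotheses; the machinery you set up (which, incidentally, treats the Hall term via the transport identity rather than via the commutators $[\Delta_q, F\times\nabla\times]G$ and $[\Delta_q,(\nabla\times F)\times]G$ that the paper introduces for Theorem \ref{mmd}) proves a stronger theorem, but only becomes a proof of this one once the comparison step above is supplied.
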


In order to prove theorem (\ref{mmd}), it is essential to tame the Hall term, which involves the strongest nonlinearity in system (\ref{hlmhd}). This could be done thanks to the following new commutators and their corresponding estimates.
\begin{Lemma} Given vector valued functions $F$ and $G$, define the commutators
\begin{equation}\notag
\begin{split}
[\Delta_q, F\times \nabla \times]G=& \Delta_q(F \times (\nabla \times G)) -F\times (\nabla \times G_q),\\
[\Delta_q, (\nabla \times F) \times]G=& \Delta_q(\nabla \times (F \times G)) - (\nabla \times F) \times G_q.
\end{split}
\end{equation}
The following estimates hold true
\begin{align*}\|[\Delta_q, F\times \nabla \times]G\|_r \lesssim \|\nabla F\|_\infty \|G\|_r,\\
\|[\Delta_q, (\nabla \times F) \times]G\|_r  \lesssim \|\nabla F\|_\infty \|G\|_r,\end{align*}
provided that $\nabla \cdot F=0$ and $G$ vanishes at large $|x|$ in $\R^3.$
\end{Lemma}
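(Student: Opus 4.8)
The plan is to treat both commutators by the convolution-and-difference-quotient scheme already used for Lemma \ref{cmm}. Represent $\Delta_q$ as convolution against the kernel $K_q(x)=\lambda_q^3 h(\lambda_q x)$, use that $\Delta_q$ commutes with $\nabla$, and wherever a value of $F$ appears against the kernel, insert the difference $F(y)-F(x)$ and bound it by $|x-y|\,\|\nabla F\|_\infty$. The guiding principle is to arrange every integration by parts so that each derivative lands either on the kernel, where the extra factor $\lambda_q$ is compensated by the $|x-y|\sim\lambda_q^{-1}$ supplied by the difference quotient, or on $F$, producing a harmless $\nabla F$, and never leaves a bare $\nabla G$. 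Young's inequality for convolutions, with the $q$-independent bounds $\|K_q\|_1\lesssim 1$ and $\||z|\,\nabla K_q\|_1\lesssim 1$ inherited from the Schwartz function $h$, then delivers the $\|\nabla F\|_\infty\|G\|_r$ estimate.

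For the first commutator I would set $H=\nabla\times G$ and observe that, since $\Delta_q$ commutes with $\nabla\times$, the quantity $[\Delta_q, F\times\nabla\times]G$ is exactly the elementary commutator $[\Delta_q, F\times]H=\Delta_q(F\times H)-F\times\Delta_q H$, which has the convolution form
\[
[\Delta_q, F\times]H(x)=\int K_q(x-y)\,(F(y)-F(x))\times H(y)\,dy .
\]
Integrating by parts in $y$ to move the curl off $G$ (the boundary term vanishing because $G$ decays) produces two groups: one in which the derivative hits the kernel, namely $\int(\nabla K_q)(x-y)\,(F(y)-F(x))\times G(y)\,dy$, and one in which it hits $F(y)-F(x)$, namely $\int K_q(x-y)\,(\nabla F)(y)\times G(y)\,dy$. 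The first is controlled by $\|\nabla F\|_\infty\,\||z|\nabla K_q\|_1\,\|G\|_r$ and the second by $\|\nabla F\|_\infty\,\|K_q\|_1\,\|G\|_r$, giving the claim.

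For the second commutator the curl sits outside the product, so I would first move it onto the kernel: using the identity $[\nabla\times(F\times G)]_i=\partial_j(F_iG_j-F_jG_i)$ together with $\partial_{x_j}K_q(x-y)=-\partial_{y_j}K_q(x-y)$,
\[
[\Delta_q(\nabla\times(F\times G))]_i=\int(\partial_j K_q)(x-y)\,(F_iG_j-F_jG_i)(y)\,dy .
\]
Writing $F(y)=F(x)+(F(y)-F(x))$ splits this into a remainder, in which the difference meets $\nabla K_q$ and is handled exactly as in the first commutator, plus a principal term $F_i(x)(\nabla\cdot G_q)(x)-(F(x)\cdot\nabla)(G_q)_i(x)$ obtained by pulling $F(x)$ out of the integral. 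The remaining task is to show that this principal term, after subtraction of $(\nabla\times F)\times G_q$, reduces to something of order $\|\nabla F\|_\infty\|G\|_r$.

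\emph{The hard part} is precisely this final cancellation. Taken at face value the principal term $F_i(\nabla\cdot G_q)-(F\cdot\nabla)(G_q)_i$ carries a derivative on the localized field $G_q$, hence an apparent factor $\lambda_q$, whereas $(\nabla\times F)\times G_q$, whose $i$-th component is $(G_q)_k\partial_k F_i-(G_q)_k\partial_i F_k$, carries only derivatives of $F$ and so offers nothing to absorb it directly. The divergence-free hypothesis $\nabla\cdot F=0$ is the essential input: it is what puts $\nabla\times(F\times G)$ into the antisymmetric divergence form above and lets the triple-product identity $(\nabla K_q)\times(F\times G)=F\,((\nabla K_q)\cdot G)-G\,((\nabla K_q)\cdot F)$ reorganize the kernel-derivative contributions. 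I expect that closing the estimate also needs the frequency localization present in the application (the gap kernel $\lambda_{p-q}^\delta$ entering the definition of $\Lambda_b$), so that the apparent $\lambda_q$ is genuinely balanced against the high-frequency weight of $G=G_p$ rather than standing alone. Checking that these manipulations leave behind only first-order derivatives of $F$ is the one step I would verify most carefully, since it is where the estimate is won or lost.
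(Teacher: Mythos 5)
Your treatment of the first commutator is complete and correct, and it follows exactly the kernel-plus-difference-quotient scheme that the paper itself uses to prove Lemma \ref{cmm}: since $\Delta_q$ commutes with the curl, $[\Delta_q, F\times\nabla\times]G=[\Delta_q,F\times](\nabla\times G)$, and your single integration by parts (legitimate because $G$ decays) leaves only $\int(\nabla K_q)(x-y)\,(F(y)-F(x))\times G(y)\,\mathrm{d}y$ and $\int K_q(x-y)\,(\nabla F)(y)\times G(y)\,\mathrm{d}y$, both of which Young's inequality controls by $\|\nabla F\|_\infty\|G\|_r$ with constants independent of $q$. Note that this half uses neither $\nabla\cdot F=0$ nor anything beyond the decay of $G$. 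The survey gives no proof of this lemma (it is imported from \cite{D1}), so the only internal benchmark is the proof of Lemma \ref{cmm}, which your argument matches.

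The second estimate you do not close, and the obstruction you isolate is real: after writing $[\nabla\times(F\times G)]_i=\partial_j(F_iG_j-F_jG_i)$ and freezing $F$ at $x$, the principal term $F_i(\nabla\cdot G_q)-(F\cdot\nabla)(G_q)_i$ carries a full derivative on $G_q$, hence a factor $\lambda_q\|F\|_\infty\|G\|_r$ that nothing in $(\nabla\times F)\times G_q$ can absorb. Neither $\nabla\cdot F=0$ nor the decay of $G$ helps: for $F$ a constant vector (divergence free, $\nabla F=0$) one gets $[\Delta_q,(\nabla\times F)\times]G=F(\nabla\cdot G_q)-(F\cdot\nabla)G_q\neq 0$ while the claimed upper bound vanishes, so the second inequality is false for the commutator exactly as printed. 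The defect lies in the displayed definition, not in your method. For the object to be a commutator consistent with its name, the subtracted term should be $\nabla\times(F\times G_q)$ rather than $(\nabla\times F)\times G_q$; then the quantity equals $\nabla\times[\Delta_q,F\times]G$, and your first computation bounds it by $\|\nabla F\|_\infty\|G\|_r$ at once, the extra derivative falling either on $K_q$ (paid for by the difference quotient) or on $F(x)$. Alternatively, if the first term is meant to be $\Delta_q((\nabla\times F)\times G)$, the bound follows from Young's inequality alone, without any difference quotient. Your hope that the frequency localization $\lambda_{p-q}^{\delta}$ from the definition of $\Lambda_b$ rescues the leftover term should be discarded, since the lemma must stand on its stated hypotheses; and your remark that $\nabla\cdot F=0$ is what produces the antisymmetric divergence form is inaccurate, because $[\nabla\times(F\times G)]_i=\partial_j(F_iG_j-F_jG_i)$ is an identity requiring no divergence condition.
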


Indeed, the Hall term is translated into the following term $$H=-\lambda_q^{2s} \int_{\R^3}\Delta_q ((\nabla \times b) \times b) \cdot (\nabla \times b_q)\mathrm{d}x,$$ which can be decomposed using Bony's paraproduct as
\begin{equation}\notag
\begin{split}
H= &-\sum_{q \geq -1} \sum_{|p-q| \leq 2}\lambda_q^{2s} \int_{\R^3}\Delta_q ((\nabla \times b_p) \times b_{\leq p-2}) \cdot (\nabla \times b_q)\mathrm{d}x \\
&-\sum_{q \geq -1} \sum_{|p-q| \leq 2}\lambda_q^{2s} \int_{\R^3}\Delta_q ((\nabla \times b_{\leq p-2}) \times b_p) \cdot (\nabla \times b_q)\mathrm{d}x \\
&-\sum_{q \geq -1} \sum_{p \geq q-2}\lambda_q^{2s} \int_{\R^3}\Delta_q ((\nabla \times b_p) \times \tilde b_p) \cdot (\nabla \times b_q)\mathrm{d}x\\
=:& H_1+H_2+H_3.
\end{split}
\end{equation}
Applying the commutators to the above terms reveals cancellations, for example:
\begin{equation}\notag
\begin{split}
H_1=& \sum_{q \geq -1} \sum_{|p-q| \leq 2}\lambda_q^{2s} \int_{\R^3}[\Delta_q, (b_{\leq p-2} \times \nabla \times] b_p \cdot (\nabla \times b_q)\mathrm{d}x\\
&+\sum_{q \geq -1}\lambda_q^{2s} \int_{\R^3} b_{\leq q-2}\times (\nabla \times b_q) \cdot (\nabla \times b_q) \mathrm{d}x\\
&+\sum_{q \geq -1}\sum_{|p-q| \leq 2}\lambda_q^{2s} \int_{\R^3} (b_{\leq p-2}-b_{\leq q-2})\times (\nabla \times (b_p)_q) \cdot (\nabla \times b_q) \mathrm{d}x\\
=:&H_{11}+H_{12}+H_{13},
\end{split}
\end{equation}
where $H_{12}=0$ due to the property of cross product.  

One then estimates the various terms with the help of commutator estimates, H\"older's, Jensen's and Young's inequalities, as well as the definitions of the wavenumbers, ultimately reaching a Gr\"onwall type inequality which yields theorem (\ref{mmd}).

\subsection{The supercritical SQG equation} In \cite{D2}, the wavenumber spittling method was applied to the supercritical SQG equation, written as follows.
\begin{equation}\label{sqg}
\begin{cases}
\theta_t + u\cdot \nabla \theta+\kappa \Lambda^\a\theta=0,\\
u=R^{\perp} \theta,
\end{cases}
\end{equation}
where $x \in \R^2, t \geq 0,$ with $0 < \a <1, \kappa>0, \Lambda =(-\Delta)^\frac{1}{2}$ and $R^{\perp} \theta =\Lambda^{-1}(-\partial_2 \theta, \partial_1 \theta).$ In system (\ref{sqg}), the scalar function $\theta$ is the potential temperature and the vector valued function $u$ is the fluid velocity. System (\ref{sqg}), which arises from modeling geophysical flows in atmospheric sciences and oceanography, is also a toy model for the 3D Euler equations due to many parallels between the two in their forms and solutions' behaviors. While the subcritical ($1<\a \leq 2$) and critical ($\a=1$) SQG equations are known to be globally well-posed \cite{CV1, CV2, KN, KNV}, the regularity problem for the supercritical SQG equation (\ref{sqg}) remains an intriguing unresolved question. Constantin, Majda and Tabak \cite{CMT} proved a regularity criterion analogous to the one for the Euler equations. 
\begin{Theorem}\label{cmt} Given $\theta_0 \in H^s(\R^2), s\geq 3,$ there exists a unique smooth solution to system (\ref{sqg}) $\theta \in L^\infty(0,T^*; H^s(\R^2)),$ and $T^*$ is the maximal existence time iff 
$$ \int_0^{T^*}\|\nabla^{\perp}\theta(t)\|_\infty \mathrm{d}t = +\infty.$$
\end{Theorem}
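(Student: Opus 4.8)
The plan is to prove this as a Beale--Kato--Majda type continuation criterion, treating the dissipation $\kappa\Lambda^{\a}\theta$ as a harmless term: since $0<\a<1$ the dissipation is supercritical and far too weak to help close a high-order energy estimate, yet because $\Lambda^{\a}$ is a nonnegative self-adjoint operator its contribution to the $H^s$ energy balance appears with a favorable sign and may simply be discarded. The argument therefore runs essentially as in the inviscid case. First I would invoke standard local well-posedness in $H^s$, $s\ge 3$, by an energy/fixed-point argument, producing a unique maximal smooth solution on $[0,T^*)$. The entire content then reduces to an a priori bound asserting that $\|\theta(t)\|_{H^s}$ stays finite as long as $\int_0^t\|\nabla^{\perp}\theta\|_\infty\,d\tau$ does. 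Granting this, both directions follow: if $T^*<\infty$ is maximal but $\int_0^{T^*}\|\nabla^{\perp}\theta\|_\infty\,dt<\infty$, the bound keeps $\|\theta\|_{H^s}$ finite up to $T^*$ and the solution extends past $T^*$, a contradiction; the converse implication is immediate since $\nabla^{\perp}\theta$ is bounded on every compact subinterval of $[0,T^*)$.

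For the core estimate I would apply $\Lambda^s$ to the equation, pair with $\Lambda^s\theta$ in $L^2$, and use $\nabla\cdot u=0$ (valid since $u=R^{\perp}\theta$) to kill the transport contribution $\int u\cdot\nabla\Lambda^s\theta\,\Lambda^s\theta\,dx=0$. This reduces the nonlinear term to the commutator $\int[\Lambda^s,u\cdot\nabla]\theta\,\Lambda^s\theta\,dx$, which I would control by a Kato--Ponce type estimate
\[
\|[\Lambda^s,u\cdot\nabla]\theta\|_2 \lesssim \|\nabla u\|_\infty\|\theta\|_{\dot{H}^s} + \|\Lambda^s u\|_2\,\|\nabla\theta\|_\infty.
\]
Since $u=\Lambda^{-1}\nabla^{\perp}\theta$, one has $\|\Lambda^s u\|_2=\|\theta\|_{\dot{H}^s}$ and $\|\nabla\theta\|_\infty=\|\nabla^{\perp}\theta\|_\infty$ pointwise, so after dropping the dissipation term we obtain
\[
\frac{d}{dt}\|\theta\|_{H^s}^2 \lesssim \big(\|\nabla u\|_\infty+\|\nabla^{\perp}\theta\|_\infty\big)\|\theta\|_{H^s}^2 .
\]

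The main obstacle is that the criterion is stated in terms of $\|\nabla^{\perp}\theta\|_\infty$, whereas the estimate involves $\|\nabla u\|_\infty$, and the two are linked by $\nabla u=\nabla\Lambda^{-1}\nabla^{\perp}\theta$, a zeroth-order Riesz-type operator that is \emph{unbounded} on $L^\infty$. The remedy is a logarithmic Sobolev inequality
\[
\|\nabla u\|_\infty \lesssim 1 + \|\nabla^{\perp}\theta\|_\infty\big(1+\log^{+}\|\theta\|_{H^s}\big) + \|\theta\|_2 ,
\]
which I would prove by splitting $\nabla u$ into low, intermediate, and high Littlewood--Paley blocks: the low and high frequencies are absorbed into $\|\theta\|_2$ and $\|\theta\|_{H^s}$ via Bernstein's inequality, while the logarithmically many intermediate blocks each cost a factor $\|\nabla^{\perp}\theta\|_\infty$. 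Inserting this into the energy inequality and setting $y(t)=\log\!\big(e+\|\theta(t)\|_{H^s}^2\big)$ turns it, up to lower-order terms, into $y'(t)\lesssim\|\nabla^{\perp}\theta\|_\infty\,y(t)$, so Gr\"onwall's inequality yields a double-exponential bound of the form $\|\theta(t)\|_{H^s}\le C\,\|\theta_0\|_{H^s}^{\exp(C\int_0^t\|\nabla^{\perp}\theta\|_\infty\,d\tau)}$. Finiteness of $\int_0^{T^*}\|\nabla^{\perp}\theta\|_\infty\,dt$ then forces a finite $H^s$ bound on $[0,T^*)$, completing the continuation argument. The delicate point is the bookkeeping in the logarithmic Sobolev inequality, since that single logarithmic factor is precisely what allows the Gr\"onwall step to close in spite of the $L^\infty$-unboundedness of the Riesz transforms.
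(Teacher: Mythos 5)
The paper does not prove this theorem; it is quoted as a known result of Constantin, Majda and Tabak \cite{CMT}, so there is no in-paper argument to compare against. Your proposal is correct and is essentially the argument of the cited source: the Beale--Kato--Majda strategy transplanted to SQG, with $\nabla^{\perp}\theta$ playing the role of the vorticity, the dissipation $\kappa\Lambda^{\a}\theta$ discarded by sign, the Kato--Ponce commutator estimate closing the $H^s$ energy inequality, and the logarithmic Sobolev inequality bridging the gap between $\|\nabla u\|_\infty$ (a Riesz transform of $\nabla^{\perp}\theta$, unbounded on $L^\infty$) and $\|\nabla^{\perp}\theta\|_\infty$; the high-frequency tail of that inequality is where $s>2$ (hence $s\geq 3$) is used. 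No gaps.
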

A Prodi-Serrin-Ladyzhenskaya type criterion was established by Chae \cite{C}.
\begin{Theorem}\label{chae1} Let $\theta$ be a solution to system (\ref{sqg}). If $$\nabla^{\perp} \theta \in L^q(0,T; L^p(\R^2)), \ \frac{2}{p}+\frac{\a}{q}=\a,\ p \in (\frac{2}{\a}, \infty),$$ 
then $\theta$ is regular on $[0,T].$ 
\end{Theorem}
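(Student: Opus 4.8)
The plan is to establish an a priori bound on $\|\theta(t)\|_{H^s}$ (for $s\ge 3$ as in Theorem \ref{cmt}) over $[0,T]$ and then invoke the continuation criterion and local well-posedness provided by Theorem \ref{cmt}. I would take $\theta$ to be the smooth solution on a maximal interval $[0,T^*)$ with $T^*\le T$, run the estimate below, and show $\limsup_{t\to T^*}\|\theta(t)\|_{H^s}<\infty$, which forces $T^*\ge T$. The basic balance $\frac{\mathrm{d}}{\mathrm{d}t}\|\theta\|_2^2+2\kappa\|\Lambda^{\a/2}\theta\|_2^2=0$ (using $\nabla\cdot u=0$) gives $\|\theta(t)\|_2\le\|\theta_0\|_2$, so it remains to control $\|\theta\|_{\dot H^s}$. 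Applying $\Lambda^s$, pairing with $\Lambda^s\theta$ in $L^2$, and using that $u=R^{\perp}\theta$ is divergence free (so $\int u\cdot\nabla\Lambda^s\theta\,\Lambda^s\theta\,\mathrm{d}x=0$), I obtain
$$\frac12\frac{\mathrm{d}}{\mathrm{d}t}\|\theta\|_{\dot H^s}^2+\kappa\|\theta\|_{\dot H^{s+\a/2}}^2=-\int_{\R^2}[\Lambda^s,\,u\cdot\nabla]\theta\;\Lambda^s\theta\,\mathrm{d}x=:I.$$
Since the Riesz transforms are bounded on $L^p$ for $1<p<\infty$, one has $\|\nabla u\|_p\lesssim\|\nabla\theta\|_p=\|\nabla^{\perp}\theta\|_p$, which is exactly the quantity we may control.

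The crucial step is to estimate $I$ so that only $\le\a/2$ derivatives beyond $\dot H^s$ fall on $\theta$, so they can be absorbed by the dissipation. The naive distribution placing all of $\Lambda^s$ on one factor costs $2/p$ derivatives and only reaches $p\ge 4/\a$; instead I would split the top-order derivatives symmetrically between the two $\theta$ factors. A Kato--Ponce/Kenig--Ponce--Vega type commutator estimate --- which in the present setting can be realized through Bony's paraproduct together with the commutator Lemma \ref{cmm} and Bernstein's inequality, mirroring the NSE treatment --- gives
$$\big\|[\Lambda^s,\,u\cdot\nabla]\theta\big\|_{L^{\frac{2p}{p+1}}}\lesssim \|\nabla u\|_{p}\,\|\Lambda^s\theta\|_{L^{\frac{2p}{p-1}}}\lesssim \|\nabla^{\perp}\theta\|_{p}\,\|\Lambda^s\theta\|_{L^{\frac{2p}{p-1}}}$$
(the symmetric endpoint term $\|\Lambda^s u\|_{L^{2p/(p-1)}}\|\nabla\theta\|_p$ is handled identically via the Riesz bounds). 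Hölder with the dual pair $\big(\tfrac{2p}{p+1},\tfrac{2p}{p-1}\big)$ then yields $|I|\lesssim \|\nabla^{\perp}\theta\|_{p}\,\|\Lambda^s\theta\|_{L^{\frac{2p}{p-1}}}^2$. The two-dimensional embedding $\dot H^{1/p}(\R^2)\hookrightarrow L^{\frac{2p}{p-1}}(\R^2)$ gives $\|\Lambda^s\theta\|_{L^{2p/(p-1)}}\lesssim\|\theta\|_{\dot H^{s+1/p}}$, and since $p>2/\a$ forces $1/p<\a/2$, interpolation produces $\|\theta\|_{\dot H^{s+1/p}}\lesssim\|\theta\|_{\dot H^s}^{1-a}\|\theta\|_{\dot H^{s+\a/2}}^{a}$ with $a=\tfrac{2}{\a p}\in(0,1)$.

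Substituting, $|I|\lesssim\|\nabla^{\perp}\theta\|_{p}\,\|\theta\|_{\dot H^s}^{2(1-a)}\,\|\theta\|_{\dot H^{s+\a/2}}^{2a}$, and Young's inequality with exponents $\tfrac1a,\tfrac1{1-a}$ absorbs the dissipative factor:
$$|I|\le \frac{\kappa}{2}\|\theta\|_{\dot H^{s+\a/2}}^2+C\,\|\nabla^{\perp}\theta\|_{p}^{\frac{1}{1-a}}\,\|\theta\|_{\dot H^s}^2.$$
A direct computation gives $\frac{1}{1-a}=\frac{\a p}{\a p-2}=q$, precisely the exponent in the critical relation $\frac2p+\frac\a q=\a$. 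Absorbing $\tfrac\kappa2\|\theta\|_{\dot H^{s+\a/2}}^2$ on the left and combining with the $L^2$ bound, I reach the Grönwall inequality $\frac{\mathrm{d}}{\mathrm{d}t}\|\theta\|_{H^s}^2\lesssim \|\nabla^{\perp}\theta\|_{p}^{q}\,\|\theta\|_{H^s}^2$, hence $\|\theta(t)\|_{H^s}^2\le\|\theta_0\|_{H^s}^2\exp\big(C\int_0^t\|\nabla^{\perp}\theta\|_p^{q}\,\mathrm{d}\tau\big)$. The hypothesis $\nabla^{\perp}\theta\in L^q(0,T;L^p)$ makes the exponential finite, giving $\theta\in L^\infty(0,T;H^s)$ and hence regularity on $[0,T]$.

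I expect the main obstacle to be proving the commutator estimate in the symmetric form so as to reach the full range $p>2/\a$: the derivative bookkeeping must be arranged so that each $\theta$ factor absorbs only $1/p<\a/2$ derivatives, which is exactly what the interpolation and the dissipation can afford, whereas a careless asymmetric distribution only yields $p\ge 4/\a$ and misses the critical range. Verifying this bound rigorously --- via the paraproduct split into low--high, high--low and high--high interactions, discarding the transport contribution using $\nabla\cdot u=0$, and summing the dyadic pieces with Bernstein --- is the one genuinely technical ingredient; the subsequent Sobolev, interpolation, Young and Grönwall steps are routine.
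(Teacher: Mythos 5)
The paper does not actually prove this statement: Theorem \ref{chae1} is quoted from Chae \cite{C} purely as background for the SQG discussion, and the survey's own wavenumber-splitting machinery is deployed only for the stronger low-modes criterion, Theorem \ref{md1}. So there is no in-paper proof to compare against, and your argument must be judged on its own merits; on that basis it is correct and essentially complete. The exponent bookkeeping all checks out: H\"older with the dual pair $\bigl(\tfrac{2p}{p+1},\tfrac{2p}{p-1}\bigr)$, the embedding $\dot H^{1/p}(\R^2)\hookrightarrow L^{2p/(p-1)}(\R^2)$, the interpolation exponent $a=\tfrac{2}{\alpha p}\in(0,1)$ (which is exactly where the restriction $p>2/\alpha$ enters), and the identity $\tfrac{1}{1-a}=\tfrac{\alpha p}{\alpha p-2}=q$; moreover the Riesz-transform bound $\|\nabla u\|_p\lesssim\|\nabla^{\perp}\theta\|_p$ is legitimate precisely because $p<\infty$, consistent with the excluded endpoint. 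You also correctly isolate the one load-bearing ingredient, the Kenig--Ponce--Vega-type commutator estimate with mixed Lebesgue exponents; for $s\geq 3$ and exponents strictly between $1$ and $\infty$ this is standard and provable by the Bony decomposition you sketch, so it does not constitute a genuine gap. Two small remarks: the closing step should explicitly route through Theorem \ref{cmt} (boundedness in $H^s$, $s\geq 3$, gives $\nabla^{\perp}\theta\in L^\infty(0,T;L^\infty)$ by Sobolev embedding, so the Beale--Kato--Majda integral is finite and the solution continues past $T$), and your version of the basic energy balance with $\|\Lambda^{\alpha/2}\theta\|_2^2$ is the correct one --- the display in the paper's SQG subsection with $\|\nabla\theta\|_2^2$ is a typo. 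For the record, the original argument in \cite{C} is organized differently, working at the level of $L^p$ norms of $\nabla^{\perp}\theta$ with the positivity of the fractional Laplacian tested against $|f|^{p-2}f$ rather than an $H^s$ commutator; your route trades that for the Kato--Ponce machinery but lands directly in the space needed for the continuation criterion.
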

The Besov space version of the Prodi-Serrin-Ladyzhenskaya type regularity criterion was due to Dong and Pavlovi\'c \cite{DP}.
\begin{Theorem} Let $\theta$ be a weak solution to system (\ref{sqg}). If $$\theta \in L^r(0,T; B^s_{p, \infty}(\R^2)), \ s=\frac{2}{p}+1-\a+\frac{\a}{r}, \  2 \leq p,r < \infty,$$ then $\theta$ is a regular solution on $[0,T].$
\end{Theorem}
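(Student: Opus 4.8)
The plan is to propagate a subcritical Sobolev norm of $\theta$ along the flow: I will show that the hypothesis forces $\sup_{[0,T]}\|\theta(t)\|_{H^\sigma}<\infty$ for some fixed $\sigma>2$, which in particular lies in the subcritical range $\sigma>2-\a$. Since in $\R^2$ one has $H^\sigma\hookrightarrow W^{1,\infty}$ for $\sigma>2$, such a bound controls $\int_0^T\|\nabla^\perp\theta\|_\infty\,\mathrm{d}t$ and hence, by the blow-up criterion of Theorem \ref{cmt} (equivalently, because the local existence time in $H^\sigma$ depends only on $\|\theta\|_{H^\sigma}$), upgrades the regular solution on $[0,T)$ to a regular solution on $[0,T]$. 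Two structural facts will be used throughout. First, $u=R^\perp\theta$ is divergence free, so transport terms lose their principal part after integration by parts. Second, $R^\perp=\Lambda^{-1}\nabla^\perp$ is a matrix of zeroth-order Fourier multipliers that are smooth on each dyadic annulus, so $\|u_q\|_p\lesssim\|\theta_q\|_p$ uniformly in $q$; in particular every bound obtained for $u$ can be read off the Besov norm of $\theta$ appearing in the hypothesis, e.g. $\lambda_{p'}^{s}\|u_{p'}\|_p\lesssim\lambda_{p'}^{s}\|\theta_{p'}\|_p\le\|\theta\|_{B^s_{p,\infty}}$.

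Following the scheme behind (\ref{eng}), I apply $\Delta_q$ to system (\ref{sqg}), pair with $\lambda_q^{2\sigma}\theta_q$ in $L^2$, integrate over $\R^2$ and sum in $q$. Since $\Lambda^\a$ is self-adjoint and $\widehat{\theta_q}$ is supported in $|\xi|\sim\lambda_q$, the dissipation produces a coercive term, leaving
\begin{equation}\notag
\frac12\frac{\mathrm{d}}{\mathrm{d}t}\|\theta\|_{H^\sigma}^2+c\kappa\sum_{q\ge-1}\lambda_q^{2\sigma+\a}\|\theta_q\|_2^2\le |I|,\qquad I:=-\sum_{q\ge-1}\lambda_q^{2\sigma}\int_{\R^2}\Delta_q(u\cdot\nabla\theta)\,\theta_q\,\mathrm{d}x.
\end{equation}
The heart of the matter is to estimate $I$. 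I decompose it with Bony's paraproduct into low-high, high-low and diagonal pieces exactly as in Section 3, and in the low-high transport piece I rewrite $\Delta_q(u_{\le p-2}\cdot\nabla\theta_p)$ using the commutator $[\Delta_q,u_{\le p-2}\cdot\nabla]\theta_p$. The principal remaining term $\int u_{\le q-2}\cdot\nabla\theta_q\,\theta_q\,\mathrm{d}x$ vanishes because $\nabla\cdot u=0$, while the finitely many off-diagonal correction frequencies are estimated directly; the commutator itself is controlled by Lemma \ref{cmm} in its $n=2$ form.

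For each resulting piece I apply H\"older in $x$, convert between Lebesgue exponents by Bernstein's inequality with $n=2$, and extract one factor $\lambda_{p'}^{s}\|u_{p'}\|_p\le\|\theta\|_{B^s_{p,\infty}}$; the leftover powers of $\lambda$ then sum geometrically by the frequency separation built into the paraproduct. The bookkeeping is arranged so that the two remaining $L^2$-type factors assemble into the dissipation sum $\sum_q\lambda_q^{2\sigma+\a}\|\theta_q\|_2^2$ and the energy $\|\theta\|_{H^\sigma}^2$, with the dissipation entering to the power $\tfrac{r-1}{r}$ and the energy to the power $\tfrac1r$. A weighted Young inequality then gives, for any $\eps>0$,
\begin{equation}\notag
|I|\le \eps\,\kappa\sum_{q\ge-1}\lambda_q^{2\sigma+\a}\|\theta_q\|_2^2+C_\eps\,\|\theta\|_{B^s_{p,\infty}}^{\,r}\,\|\theta\|_{H^\sigma}^2 .
\end{equation}
Choosing $\eps$ small absorbs the first term into the dissipation and yields $\frac{\mathrm{d}}{\mathrm{d}t}\|\theta\|_{H^\sigma}^2\le C\,\|\theta(t)\|_{B^s_{p,\infty}}^{\,r}\,\|\theta\|_{H^\sigma}^2$. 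As $t\mapsto\|\theta(t)\|_{B^s_{p,\infty}}^{\,r}$ is integrable on $(0,T)$ by assumption, Gr\"onwall's inequality bounds $\|\theta\|_{H^\sigma}$ on $[0,T]$ and the proof closes.

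I expect the main obstacle to be precisely the exponent balancing of the previous paragraph. Because $0<\a<1$ the dissipation $\Lambda^\a$ is supercritical and supplies only $\a$ derivatives of slack, so the powers of $\lambda_q$ in every paraproduct piece must close with no room to spare. That the dissipation appears to the conjugate power $\tfrac{r-1}{r}$ and the Besov factor only to the first power — so that the Young step reproduces exactly the integrable weight $\|\theta\|_{B^s_{p,\infty}}^{\,r}$ — is forced by, and is equivalent to, the scaling relation $s=\tfrac2p+1-\a+\tfrac{\a}{r}$. Verifying that this single relation makes the low-high, high-low and diagonal contributions close simultaneously, including the borderline summation where the exponent $1-s$ changes sign, is the technical crux.
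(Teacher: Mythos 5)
First, a point of order: the paper does not prove this theorem. It is quoted purely as background and attributed to Dong and Pavlovi\'c \cite{DP}, so there is no in-paper proof to measure yours against; I can only assess your argument on its own terms and against the method the survey actually develops for the SQG equation, namely Theorem \ref{md1}.

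Your overall strategy --- a frequency-localized $H^\sigma$ energy estimate, Bony's decomposition plus the commutator of Lemma \ref{cmm}, extraction of a single factor $\lambda_{p'}^{s}\|u_{p'}\|_p\lesssim\|\theta\|_{B^s_{p,\infty}}$, and a Young step with conjugate exponents $r/(r-1)$ and $r$ followed by Gr\"onwall and the criterion of Theorem \ref{cmt} --- is the standard route to such criteria, and your scaling bookkeeping is right: on every paraproduct piece the total derivative count is $2\sigma+\a(1-\frac1r)$, exactly what the product of the dissipation $\sum_q\lambda_q^{2\sigma+\a}\|\theta_q\|_2^2$ to the power $1-\frac1r$ and $\|\theta\|_{H^\sigma}^2$ to the power $\frac1r$ can absorb, and this is equivalent to $s=\frac2p+1-\a+\frac{\a}{r}$. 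Two genuine gaps remain. (i) The summability you flag as the crux is resolvable, but you have not resolved it, and the naive route fails: if in the commutator piece you stop at $\lambda_{p'}\|u_{p'}\|_p\le\lambda_{p'}^{1-s}\|\theta\|_{B^s_{p,\infty}}$, then whenever $s>1$ (which holds always for $p=2$, since $\a<1$) the geometric sum over $p'\le q-2$ is dominated by the lowest frequencies and the surviving bound carries $\lambda_q^{2\sigma+2/p}$ --- one derivative too many for the fractional dissipation. The fix is to take $r_3=\infty$ in Lemma \ref{cmm} and apply Bernstein, $\|u_{p'}\|_\infty\lesssim\lambda_{p'}^{2/p}\|u_{p'}\|_p$, so that the geometric ratio becomes $\lambda_{p'}^{1+2/p-s}=\lambda_{p'}^{\a(1-1/r)}$, whose exponent is strictly positive for all admissible parameters; the sign change you worry about then never occurs where it matters. (ii) More seriously, the theorem is stated for a weak solution with no regularity assumed on $[0,T)$, so your $H^\sigma$ computation is purely formal and the Gr\"onwall argument has no admissible starting time $t_0$ with $\|\theta(t_0)\|_{H^\sigma}<\infty$. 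You must either add the hypothesis that $\theta$ is regular on $[0,T)$, as the neighbouring theorems of the survey do, or supply a local-existence-plus-continuation (or weak--strong uniqueness) layer built on the instantaneous smoothing of the dissipative equation. It is worth noting that the survey's own SQG argument for Theorem \ref{md1} deliberately sidesteps both issues: it propagates only a $B^s_{l,l}$ norm with $0<s<1$, lands in the H\"older space $C^{0,s-2/l}$ with $s-\frac2l>1-\a$, and invokes the Constantin--Wu result (Theorem \ref{cnw}) rather than a Beale--Kato--Majda criterion --- a much cheaper target than $H^\sigma$ with $\sigma>2$.
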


Notice that system (\ref{sqg}) enjoys invariance under the scaling transform $$\theta(x,t) \mapsto \theta_\lambda(x,t)=\lambda^{\a-1}\theta(\lambda x, \lambda^\a t),$$ thus some critical spaces are $H^{2-\a}(\R^2),$ $C^{1-\a}(\R^2),$ $L^\infty(\R^2)$ and $B^{1-\a}_{\infty, \infty}(\R^2),$ the last being the largest critical space for system (\ref{sqg}). In particular, the conditions in the theorems listed above are all in terms of critical quantities.

While Leray-Hopf type weak solutions to system (\ref{sqg}) are weak solutions that satisfy the energy inequality $$\|\theta(t)\|_2^2 +2\kappa \int_{t_0}^t \|\nabla \theta(s)\|_2^2\mathrm{d}s \leq \|u(t_0)\|_2^2,$$ the notion of viscosity solutions also comes into play. A weak solution to system (\ref{sqg}) is a viscosity solution if it is the weak limit of of a sequence of solutions to the following systems with $\e \to 0,$
\begin{equation}\notag
\begin{cases}
\theta_t^\e + R^{\perp} \theta^\e\cdot \nabla \theta^\e+\kappa \Lambda^\a\theta^\e=\e \Delta \theta^\e,\\
\theta^\e(x,0)=\theta_0,
\end{cases}
\end{equation}
where $\theta_0 \in H^s(\R^2), s >1.$

Define the wavenumber $\Lambda(t)=2^{Q(t)}$ as $$\Lambda(t)=\min\{\lambda_q: \lambda_p^{1-\a}\|\theta_p(t)\|_\infty <c_0\kappa, \forall p > q \geq 1\},$$ where $c_0$ is some constant. The low modes regularity criterion for the SQG equation states as follows.
\begin{Theorem}\label{md1}
Let $\theta$ be a viscosity solution to system (\ref{sqg}) on $[0,T].$ Assume that $$\int_0^T \|\nabla \theta_{\leq Q(t)}(t)\|_{B^0_{\infty, \infty}}\mathrm{d}t <+\infty,$$
then $\theta$ is regular on $[0,T].$
\end{Theorem}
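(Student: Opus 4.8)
\textbf{Proof proposal for Theorem \ref{md1}.}
The plan is to mimic the wavenumber–splitting scheme already established for the NSE in Theorem \ref{chd}, adapting it to the scalar transport–diffusion structure of the SQG equation. First I would project the equation via $\Delta_q$ and test against $\lambda_q^{2s}\theta_q$, summing over $q$ to produce the higher order energy balance
\begin{equation}\notag
\frac{1}{2}\frac{\mathrm{d}}{\mathrm{d}t}\sum_{q \geq 1}\lambda_q^{2s}\|\theta_q\|_2^2 = -\kappa \sum_{q \geq 1}\lambda_q^{2s}\||\Lambda|^{\frac{\a}{2}}\theta_q\|_2^2 - \sum_{q \geq 1}\lambda_q^{2s}\int_{\R^2}\Delta_q(u \cdot \nabla \theta)\,\theta_q\,\mathrm{d}x,
\end{equation}
where the fractional dissipation furnishes the coercive term $\kappa\sum_q \lambda_q^{2s+\a}\|\theta_q\|_2^2$ after using the positivity of $\Lambda^\a$ on annular frequencies (Bernstein). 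The essential task, exactly as in the NSE case, is to control the nonlinear term $\int \Delta_q(u\cdot\nabla\theta)\theta_q$, keeping in mind $u=R^\perp\theta$ so that $u$ is of the same regularity order as $\theta$ and $\nabla\cdot u=0$.

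Next I would apply Bony's paraproduct to decompose the nonlinear contribution into the three pieces corresponding to low-high, high-low, and diagonal frequency interactions, and rewrite the transport piece $\Delta_q(u_{\leq p-2}\cdot\nabla\theta_p)$ using the commutator $[\Delta_q, u_{\leq p-2}\cdot\nabla]\theta_p$, invoking Lemma \ref{cmm}. As in the NSE argument, the divergence-free condition $\nabla\cdot u_{\leq p-2}=0$ kills the principal local term, leaving a commutator remainder plus a term carrying a difference $u_{\leq p-2}-u_{\leq q-2}$ across the narrow band $|p-q|\leq 2$. I would then split each resulting sum at the wavenumber $Q(t)$ into low modes (frequencies $\leq Q$) and high modes (frequencies $>Q$). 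The low-mode terms should be bounded, after H\"older's and Bernstein's inequalities, by $\|\nabla\theta_{\leq Q}\|_{B^0_{\infty,\infty}}\sum_q\lambda_q^{2s}\|\theta_q\|_2^2$, the quantity appearing in the hypothesis; here it is convenient that $\nabla^\perp\theta$ and $\nabla\theta$ share the same $B^0_{\infty,\infty}$ size and that $u=R^\perp\theta$ is bounded on each Littlewood-Paley block by a Riesz transform, so $\|u_{\leq Q}\|$-type norms are comparable to $\|\theta_{\leq Q}\|$-type norms. The high-mode terms, by the definition of $\Lambda(t)$ through $\lambda_p^{1-\a}\|\theta_p\|_\infty < c_0\kappa$ for $p>Q$, should be absorbed into the dissipative term $\kappa\sum_q\lambda_q^{2s+\a}\|\theta_q\|_2^2$ provided $c_0$ is chosen small; this is where the critical scaling of the $\Lambda(t)$ threshold is exactly matched to the order $\a$ of the dissipation.

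The outcome would be a Gr\"onwall-type inequality of the shape
\begin{equation}\notag
\frac{\mathrm{d}}{\mathrm{d}t}\|\theta\|_{H^s}^2 \lesssim \|\nabla\theta_{\leq Q(t)}\|_{B^0_{\infty,\infty}}\bigl(1+\log\|\theta\|_{H^s}\bigr)\|\theta\|_{H^s}^2,
\end{equation}
where the logarithmic factor arises because, by the definition of $\Lambda(t)$ and Bernstein's inequality, $Q(t)\lesssim 1+\log\|\theta\|_{H^s}$. A logarithmic Gr\"onwall argument then shows $\|\theta(t)\|_{H^s}$ stays finite on $[0,T]$ under the stated integrability of $\|\nabla\theta_{\leq Q(t)}\|_{B^0_{\infty,\infty}}$, giving regularity. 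I expect the main obstacle to be the supercriticality of the dissipation: the coercive term scales like $\lambda_q^{2s+\a}$ rather than $\lambda_q^{2s+2}$, so the high-mode absorption has less room than in the NSE, and one must verify that the exponent bookkeeping in the high-mode estimates genuinely closes for $0<\a<1$ with $s$ above the relevant threshold (the SQG critical regularity $2-\a$). A secondary technical point is that the result is stated for viscosity solutions, so the a priori estimates must first be carried out on the regularized systems with the $\e\Delta$ term and then passed to the limit $\e\to 0$, checking that the bounds are uniform in $\e$ and that the extra Laplacian only helps the dissipative side; this regularization step, while routine, is where the viscosity-solution framework must be reconciled with the frequency-localized energy method.
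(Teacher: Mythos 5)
There is a genuine gap, and you in fact put your finger on it yourself without resolving it: the exponent bookkeeping does \emph{not} close in the $L^2$-based framework you propose. To conclude regularity from an a priori bound $\theta\in L^\infty(0,T;H^s(\R^2))$ you would need $s$ subcritical, i.e.\ $s>2-\a>1$; but the nonlinear term costs a full derivative $\lambda_q$ while the dissipation only supplies $\lambda_q^{\a}$ with $\a<1$, and the low--high/diagonal paraproduct sums in the commutator argument only converge when the regularity index stays below $1$. So the Gr\"onwall inequality you write down for $\|\theta\|_{H^s}^2$ cannot be established for any $s$ large enough to be useful, and the "logarithmic Gr\"onwall" step modeled on Theorem \ref{chd} has nothing subcritical to land on. This is precisely why the paper abandons $H^s$ for this model.

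The paper's proof instead runs the wavenumber-splitting scheme in the $L^l$-based Besov space $B^s_{l,l}$, testing the projected equation against $|\theta_q|^{l-2}\theta_q$ and choosing the two parameters independently: $0<s<1$ (so the nonlinear estimates close) together with $sl>2$ and $\frac{\a}{l}<1-s<\a-\frac{2}{l}$ (so that $l$ is large). The Gr\"onwall inequality then yields $\theta\in L^\infty(0,T;B^s_{l,l})\hookrightarrow L^\infty(0,T;C^{0,s-\frac{2}{l}})$ with H\"older exponent $s-\frac{2}{l}>1-\a$, which is only \emph{barely} subcritical; the passage from this to full smoothness is not a routine bootstrap but rests on the external regularity theorem of Constantin and Wu (Theorem \ref{cnw}), which upgrades $C^\delta$ solutions with $\delta>1-\a$ to $C^\infty$. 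Your proposal is missing both ingredients: the change of functional framework that decouples the integrability $l$ from the smoothness $s$, and the Constantin--Wu endpoint result on which the whole conclusion hinges. (Your remarks on handling viscosity solutions via the $\e\Delta$ regularization are consistent with the paper and unproblematic.)
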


It can be shown that theorem (\ref{md1}) has improved all of the regularity criteria for system (\ref{sqg}) mentioned before. Its proof is based upon the following regularity result due to Constantin and Wu \cite{CW}.
\begin{Theorem}\label{cnw}
Let $\theta$ be a Leray-Hopf weak solution to system (\ref{sqg}). If $$ \theta \in L^\infty(t_0, t; C^\delta(\R^2)), \ \delta >1-\a, \ 0<t_0< t<\infty,$$
then $\theta \in C^\infty([t_0,t]\times \R^2).$
\end{Theorem}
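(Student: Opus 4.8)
The plan is to run a frequency-localized bootstrap: treat $\theta$ as solving a linear fractional drift-diffusion equation and show that any level of spatial H\"older regularity can be improved by a fixed positive amount, iterating up to $C^\infty$ in space, after which time regularity follows directly from the equation. The starting observation is that the order-zero operator $R^\perp$ preserves $C^\delta$ for non-integer $\delta$, so the hypothesis $\theta \in L^\infty(t_0,t;C^\delta)$ gives $u=R^\perp\theta \in L^\infty(t_0,t;C^\delta)$ with $\|u\|_{C^\delta}\lesssim\|\theta\|_{C^\delta}$, and $u$ is divergence free since $u=\Lambda^{-1}(-\p_2\theta,\p_1\theta)$. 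Writing $C^\delta=B^\delta_{\infty,\infty}$, the task becomes: from $\sup_p\lambda_p^\delta\|\theta_p\|_\infty<\infty$ with $\delta>1-\a$, deduce $\sup_p\lambda_p^{\delta'}\|\theta_p\|_\infty<\infty$ for some $\delta'>\delta$.

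First I would localize the equation, applying $\Delta_q$ to obtain $\p_t\theta_q+u_{\le q-2}\cdot\nabla\theta_q+\k\Lambda^\a\theta_q=F_q$, where $F_q$ collects the commutator together with the remaining Bony paraproduct pieces, namely
\[
F_q=-[\Delta_q,u_{\le q-2}\cdot\nabla]\theta_q-\sum_{|p-q|\le2}\Delta_q(u_p\cdot\nabla\theta_{\le p-2})-\sum_{p\ge q-2}\Delta_q(u_p\cdot\nabla\tilde\theta_p).
\]
Evaluating $\tfrac{\mathrm{d}}{\mathrm{d}t}\|\theta_q\|_\infty$ at a point where $|\theta_q|$ attains its maximum, the transport term $u_{\le q-2}\cdot\nabla\theta_q$ drops out because $\nabla\theta_q$ vanishes there, while a frequency-localized positivity (lower-bound) estimate for $\Lambda^\a$ gives $\Lambda^\a\theta_q\gtrsim\lambda_q^\a\|\theta_q\|_\infty$ at that point. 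Hence $\tfrac{\mathrm{d}}{\mathrm{d}t}\|\theta_q\|_\infty\le -c\k\lambda_q^\a\|\theta_q\|_\infty+\|F_q\|_\infty$, which is the mechanism that trades a factor $\lambda_q^\a$ of smoothing for each frequency.

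The heart of the matter is to show that $\|F_q\|_\infty$ is genuinely subcritical relative to $\k\lambda_q^\a$. Using $\|\theta_p\|_\infty,\|u_p\|_\infty\lesssim\lambda_p^{-\delta}\|\theta\|_{C^\delta}$, Bernstein's inequality, and the commutator bound of Lemma \ref{cmm}, the commutator obeys $\|[\Delta_q,u_{\le q-2}\cdot\nabla]\theta_q\|_\infty\lesssim\|\theta_q\|_\infty\sum_{p'\le q}\lambda_{p'}\|u_{p'}\|_\infty\lesssim\lambda_q^{1-\delta}\|\theta\|_{C^\delta}\|\theta_q\|_\infty$, a coefficient on $\|\theta_q\|_\infty$ that is absorbed into the dissipation for $q$ large precisely because $1-\delta<\a$; this is the single point where the hypothesis $\delta>1-\a$ enters. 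The high-low and high-high pieces carry a full derivative but no factor of $\theta_q$, and using the divergence-free condition to write the high-high interaction as $\Delta_q\nabla\cdot(u_p\tilde\theta_p)$ one finds $\|F_q^{\mathrm{hl}}\|_\infty+\|F_q^{\mathrm{hh}}\|_\infty\lesssim\lambda_q^{1-2\delta}\|\theta\|_{C^\delta}^2$. The resulting stationary balance yields, for $q$ beyond a threshold, $\|\theta_q(\tau)\|_\infty\lesssim\k^{-1}\lambda_q^{1-2\delta-\a}\|\theta\|_{C^\delta}^2$ (the contribution of the data at $t_0$ decaying super-polynomially in $q$ for $\tau>t_0$), so that $\theta\in C^{\delta'}$ with $\delta'=2\delta+\a-1>\delta$.

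Finally I would iterate. The gain per stage is bounded below by the fixed positive number $\delta+\a-1$ while $\delta\le1$, and by $\a$ once $\delta>1$ (where the low-frequency sums defining $\|\nabla\theta_{\le p}\|_\infty$ are instead controlled by the top-regularity norm), so finitely many iterations push $\theta$ past any prescribed H\"older exponent; hence $\theta$ is smooth in space, and then $\p_t\theta=-u\cdot\nabla\theta-\k\Lambda^\a\theta$ promotes this to $C^\infty([t_0,t]\times\R^2)$. I expect the main obstacle to be exactly the subcriticality bookkeeping: since the nonlinearity carries one full derivative while the diffusion supplies only order $\a<1$, the margin is critical, and the whole gain rests on structural facts working in tandem — the extremum test that eliminates the top-order transport term $u_{\le q-2}\cdot\nabla\theta_q$, the commutator that transfers the remaining derivative onto the low-frequency drift so that $\delta>1-\a$ makes it absorbable by the diffusion, and the divergence-free condition that renders the high-high sum summable for the full range $\delta>1-\a$ rather than only $\delta>\tfrac12$. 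A secondary technical point is justifying the extremum computation and the positivity lemma for the nonlocal operator $\Lambda^\a$, and carrying the localized estimates along the vanishing-viscosity approximations uniformly.
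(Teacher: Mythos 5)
The paper does not actually prove Theorem \ref{cnw}: it is imported as a black box from Constantin--Wu \cite{CW} and used as the final step in the proof of Theorem \ref{md1}, so there is no in-paper argument to compare against line by line. That said, your bootstrap is the correct architecture and is essentially how the result is proved: Bony decomposition plus the commutator of Lemma \ref{cmm} to put the full derivative on the low-frequency drift, the divergence-free structure to handle the high-high block for all $\delta>0$, a dissipation lower bound of order $\lambda_q^\alpha$ on each block, and the arithmetic $\delta'=2\delta+\alpha-1>\delta\iff\delta>1-\alpha$ driving the iteration (with the gain saturating at $\alpha$ once $\delta\geq 1$, as you note). Your identification of where each structural ingredient is indispensable is accurate.

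The one step I would not let pass as a ``secondary technical point'' is the $L^\infty$-endpoint dissipation bound: the claim that at a point $x_0$ where $|\theta_q|$ attains its maximum one has $\Lambda^\alpha\theta_q(x_0)\gtrsim\lambda_q^\alpha\|\theta_q\|_\infty$ is not a standard lemma and is genuinely delicate --- at an extremum one only gets $\Lambda^\alpha\theta_q(x_0)\geq 0$ for free, and extracting the full factor $\lambda_q^\alpha$ requires quantitative non-degeneracy near $x_0$ that the annular Fourier support does not obviously supply. What is a theorem (Chen--Miao--Zhang, Wu; see also the nonlinear maximum principles of \cite{CV2}) is the finite-$p$ generalized Bernstein inequality $\int|\theta_q|^{p-2}\theta_q\,\Lambda^\alpha\theta_q\,\mathrm{d}x\geq c\,\lambda_q^\alpha\|\theta_q\|_p^p$ for $1<p<\infty$. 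This is why both \cite{CW} and the survey's own sketch of Theorem \ref{md1} run the estimate in $B^s_{l,l}$ with $l$ finite and recover H\"older regularity through the embedding $B^s_{l,l}\subset C^{s-2/l}$; your scheme goes through verbatim in that setting, at the cost of losing $2/l$ in the exponent per embedding, which is harmless since $l$ can be taken large. The remaining point to make rigorous --- which you flag --- is that all of this must be performed along the viscosity (or mollified) approximations uniformly, since a priori the hypothesis only concerns a Leray--Hopf weak solution; and the conclusion one actually obtains is smoothness on $(t_0,t]$ rather than at $t_0$ itself, which is how the cited result should be read.
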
 
Instead of considering the projected equation in $L^2$-based Sobolev spaces, one approaches the problem via the Besov space $B^s_{l,l}.$ Frequency localization yield the following projected equation.
\begin{equation}\notag
\begin{split}
\frac{\mathrm{d}}{\mathrm{d}t}\sum_{q \geq -1}\lambda_q^{sl}\|\theta_q\|_l^l \leq C\kappa \sum_{q \geq -1}\lambda_q^{sl+\a}\|\theta_q\|_l^l+l\sum_{q \geq -1}\lambda^{sl}_q \int_{\R^3} \Delta_q(u\cdot \nabla \theta)|\theta_q|^{l-2} \theta_q \mathrm{d}x.
\end{split}
\end{equation}
Analyzing using the same tools as those used for the NSE, one obtains a Gr\"onwall type inequality which implies that $\theta \in L^\infty(0,T; B^s_{l,l}(\R^2))$ provided that the condition in theorem (\ref{md1}) holds. Choosing $s,l$ such that $0<s <1, sl>2$ and $\frac{\a}{l} <1-s <\a-\frac{2}{l},$ one infers from theorem (\ref{cnw}) and the embedding $B^s_{l,l} \subset C^{0, s-\frac{2}{l}}$ that $\theta$ is in fact a smooth solution.

\subsection{The nematic LCD system with Q-tensor}

Low modes regularity criteria have been established for the following nematic LCD system with Q-tensor in \cite{D3}.
\begin{equation}\label{lcd}
\begin{cases}
u_t +(u\cdot \nabla) u+\nabla p=\nu \Delta u+\nabla \cdot \Sigma(\mathbb{Q}),\\
\mathbb{Q}_t+(u \cdot \nabla)\mathbb{Q}-\mathbb{S}(\nabla u, \mathbb{Q})=\mu \Delta\mathbb{Q}-\mathcal{L}[\partial F(\mathbb{Q})],\\
\nabla \cdot u=0,
\end{cases}
\end{equation}
with $x \in \R^3$ and $t \geq 0.$
System (\ref{lcd}) is a model for nematic liquid crystal flows. Liquid crystals are matters in an intermediate state between the conventionally observed solid and liquid. The nematic phase, in which the rod-like molecules possess no positional order but long-range directional order through self-aligning in an almost parallel manner, is one of the most common liquid crystal phases. Nematics, due to its fluidity and optical properties, are extremely important to liquid crystal displays (LCD). In system (\ref{lcd}), $u$ is the fluid velocity, $p$ the fluid pressure, while the local configuration of the crystal and the ordering of the molecules are represented by the symmetric and traceless Q-tensor $\mathbb{Q}(t,x) \in \R^{3\times 3}_{\text{sym},0}.$ The constants $\nu$ and $\mu$ stand for the fluid viscosity and the elasticity of the molecular orientation field, respectively. In the simplified case %considered in \cite{}, 
tensors $\Sigma$ and $\mathbb{S}$ are given by
\begin{equation}\notag
\Sigma(\mathbb{Q})=\Delta\mathbb{Q}\mathbb{Q} - \mathbb{Q} \Delta\mathbb{Q}- \nabla \mathbb{Q}\otimes\nabla \mathbb{Q}, \ \mathbb{S}(\nabla u, \mathbb{Q})=\Omega(u)\mathbb{Q}-\mathbb{Q}\Omega(u)\\
\end{equation}
where $(\nabla \mathbb{Q}\otimes\nabla \mathbb{Q})_{ij}=\partial_i\mathbb{Q}_{\a \b} \partial_j\mathbb{Q}_{\a \b}$ and the skew-symmetric parts of the rate of the stress tensor $\Omega(u)=\frac{1}{2}(\nabla u-\nabla^t u).$ The operator $\mathcal{L},$ which projects onto the space of traceless matrices, is defined as $$\mathcal{L}[\mathbb{A}]=\mathbb{A}-\frac{1}{3}\tr[\mathbb{A}] \mathbb{I},$$ and the bulk potential function $F(\mathbb{Q})$ takes the Landau-de Gennes form $$F(\mathbb{Q})=\frac{a}{2}|\mathbb{Q}|^2+ \frac{b}{3}\tr|\mathbb{Q}|^3+\frac{c}{4}|\mathbb{Q}|^4.$$ 

Paicu and Zarnescu \cite{PZ1, PZ2} proved existence of weak solutions to system (\ref{lcd}). On bounded domains, Abels, Dolzmann and Liu \cite{ADL1, ADL2} proved existence and uniqueness of local-in-time strong solutions subject to various boundary conditions. A Prodi-Serrin-Ladyzhenskaya type regularity condition $$\nabla u \in L^p(0,T; L^q), \ \frac{2}{p}+\frac{3}{q}=2, \ 2 \leq p\leq 3 $$ can be found in \cite{GR}.

For system (\ref{lcd}), the wavenumber $\Lambda(t)=2^{Q(t)}$ can be defined in an almost identical fashion as that for the MHD system: $$\Lambda(t)=\min\{\lambda_q: \lambda_p^{-1+\frac{3}{r}}\| u_p(t)\|_r <c_r\min\{\nu, \mu\}, \forall p>q, q \in \mathbb{N}\}.$$ In the above definition, $c_r$ is a constant depending only on $r \in [2,6).$ Moreover, it turns out that the low modes regularity criteria for the two systems are almost identical as well. 
\begin{Theorem}\label{d11} Let $(u, \mathbb{Q})$ be a weak solution to system (\ref{lcd}) on $[0,T]$ that is also regular on $[0,T).$ Assume that $$\int_0^T \|\nabla u_{\leq Q(t)}(t) \|_{B^0_{\infty, \infty}}\mathrm{d}t <+\infty,$$ 
then $(u, \mathbb{Q})$ is regular on $[0,T].$
\end{Theorem}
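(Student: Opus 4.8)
The plan is to reproduce, for the coupled system (\ref{lcd}), the wavenumber-splitting argument that proves Theorem \ref{csh}; indeed the hypothesis $\int_0^T\|\nabla u_{\leq Q(t)}\|_{B^0_{\infty,\infty}}\,\mathrm{d}t<\infty$ is precisely the Q-tensor counterpart of the Cheskidov--Shvydkoy condition, and the wavenumber $\Lambda(t)$ is defined through $u$ alone, exactly as in the MHD Theorem \ref{cd1}. Fix $s>\frac12$ and work with the combined higher-order energy
\begin{equation}\notag
\mathcal{E}_s(t)=\sum_{q\geq-1}\lambda_q^{2s}\left(\|u_q\|_2^2+\|\nabla\mathbb{Q}_q\|_2^2\right),
\end{equation}
the elastic part $\|\nabla\mathbb{Q}\|_2^2$ being the natural partner of $\|u\|_2^2$ because the Ericksen stress $\Sigma(\mathbb{Q})$ couples $u$ to first derivatives of $\mathbb{Q}$. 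I would apply $\Delta_q$ to the first two equations of (\ref{lcd}), test the velocity equation against $\lambda_q^{2s}u_q$ and the $\mathbb{Q}$-equation against $\lambda_q^{2s}(-\Delta\mathbb{Q}_q)$, and sum over $q$; this produces the dissipation $-\nu\sum_q\lambda_q^{2s+2}\|u_q\|_2^2-\mu\sum_q\lambda_q^{2s+2}\|\nabla\mathbb{Q}_q\|_2^2$ together with the nonlinear transport, coupling, and potential terms that remain to be controlled.

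The decisive structural point, which I would establish first, is the cancellation of the top-order coupling: at the frequency-localized level the pairing of $\nabla\cdot\Sigma(\mathbb{Q})$ against $u_q$ and of $\mathbb{S}(\nabla u,\mathbb{Q})=\Omega(u)\mathbb{Q}-\mathbb{Q}\Omega(u)$ against $-\Delta\mathbb{Q}_q$ cancel up to commutators, mirroring the antisymmetric cancellation that produces the basic energy law for (\ref{lcd}). What survives is of strictly lower differential order and, after a Bony decomposition, is estimated by the commutator bound of Lemma \ref{cmm}. The remaining ingredients are mild: the quadratic elastic term $\nabla\mathbb{Q}\otimes\nabla\mathbb{Q}$ is handled by Bernstein's inequality, and the bulk term $\mathcal{L}[\partial F(\mathbb{Q})]$, a derivative-free polynomial in $\mathbb{Q}$, is absorbed using the a priori control of $\|\mathbb{Q}\|_{H^1}$ from the energy inequality, so neither affects the leading balance.

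With the coupling neutralized, the two transport nonlinearities $(u\cdot\nabla)u$ and $(u\cdot\nabla)\mathbb{Q}$ are treated exactly as in the NSE computation sketched after Theorem \ref{pln}. Each is split into Bony's three paraproducts; the low--high interaction is rewritten through the commutator $[\Delta_q,u_{\leq p-2}\cdot\nabla]$ so that $\nabla\cdot u=0$ annihilates the diagonal piece just as the term $I_{22}$ vanished above; and every resulting sum is split at the wavenumber $Q(t)$. By the definition of $\Lambda(t)$ each high-mode factor obeys $\lambda_p^{-1+3/r}\|u_p\|_r<c_r\min\{\nu,\mu\}$ and is absorbed into the dissipation once $c_r<1$, while each low-mode contribution is dominated by $\|\nabla u_{\leq Q(t)}\|_{B^0_{\infty,\infty}}\,\mathcal{E}_s$. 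The restriction $2\leq r<6$ is, as in the MHD case, exactly the range for which the Sobolev exponents in the high-mode estimates close once the extra $\mathbb{Q}$-terms are present.

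The upshot is a Gr\"onwall inequality of the same shape as (\ref{grw}),
\begin{equation}\notag
\frac{\mathrm{d}}{\mathrm{d}t}\,\mathcal{E}_s(t)\lesssim Q(t)\,\|\nabla u_{\leq Q(t)}(t)\|_{B^0_{\infty,\infty}}\,\mathcal{E}_s(t),
\end{equation}
and since the definition of $\Lambda(t)$ together with Bernstein's inequality gives the logarithmic bound $Q(t)\lesssim 1+\log\mathcal{E}_s(t)^{1/2}$, the hypothesis of the theorem feeds an Osgood/log-Gr\"onwall argument identical to the one closing Theorem \ref{csh}, keeping $\mathcal{E}_s$ finite on $[0,T]$. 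I expect the main obstacle to be the first step of the second paragraph: verifying that the third-order contribution from $\nabla\cdot\Sigma(\mathbb{Q})$ genuinely cancels against $\mathbb{S}(\nabla u,\mathbb{Q})$ after frequency localization, so that no derivative of $\mathbb{Q}$ beyond the order tracked by $\mathcal{E}_s$ survives; this requires a commutator of the cross-differential type in Lemma \ref{cmm}, adapted to the matrix structure of $\Sigma$ and $\mathbb{S}$, and is the place where the Q-tensor model genuinely differs from both the NSE and the MHD system.
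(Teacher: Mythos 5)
Your proposal follows essentially the same route the paper indicates for Theorem \ref{d11}: a combined higher-order energy in $H^s\times H^{s+1}$, cancellation of the $\Sigma(\mathbb{Q})$--$\mathbb{S}(\nabla u,\mathbb{Q})$ coupling up to commutators (requiring new commutator estimates adapted to the matrix structure), and wavenumber splitting at $Q(t)$ leading to a logarithmic Gr\"onwall inequality --- exactly the strategy the paper describes as ``along the same line as the MHD case, with additional new commutator estimates for the terms involving $\mathbb{Q}$.'' The one imprecision is your claim that the bulk term $\mathcal{L}[\partial F(\mathbb{Q})]$ is absorbed using only $\|\mathbb{Q}\|_{H^1}$: in three dimensions the cubic nonlinearity at $H^{s+1}$ regularity needs the a priori $L^\infty$ bound on $\mathbb{Q}$ (from the maximum-principle estimate for this system), since $H^1$ control alone leaves a superlinear term in $\mathcal{E}_s$ that would not close the Gr\"onwall argument up to time $T$.
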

A refined result, similar to theorem (\ref{cd1}), states as follows.
\begin{Theorem}\label{d12}
Let $(u,\mathbb{Q})$ be a weak solution to system (\ref{lcd}) that is regular on $[0,T).$ Assume that
$$\limsup_{q \to \infty}\int_\frac{T}{2}^T 1_{q \leq Q(t)}\lambda_q\|u_q\|_\infty \mathrm{d}t <c$$
for some small constant $c,$ then $(u,\mathbb{Q})$ is regular on $[0,T].$  
\end{Theorem}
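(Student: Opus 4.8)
The plan is to follow the wavenumber splitting framework developed for the NSE in Theorem~\ref{chd} and adapted to the MHD system in Theorem~\ref{cd1}, since the stated criterion for Theorem~\ref{d12} is formally identical to the one for the MHD case. The starting point is to apply the Littlewood-Paley projections $\Delta_q$ to the coupled system \rf{lcd} and form the higher order energy balance. Because the natural energy for \rf{lcd} controls both the fluid velocity and the $\mathbb{Q}$-tensor, the appropriate quantity to differentiate is the combined higher order norm $\sum_{q \geq -1}\lambda_q^{2s}(\|u_q\|_2^2 + \|\mathbb{Q}_q\|_2^2)$ for $s > \frac{1}{2}.$ After frequency localization one obtains a differential inequality of the form
\begin{equation}\notag
\begin{split}
\frac{1}{2}\frac{\mathrm{d}}{\mathrm{d}t}\sum_{q \geq -1}\lambda_q^{2s}(\|u_q\|_2^2+\|\mathbb{Q}_q\|_2^2) \leq & -\min\{\nu,\mu\}\sum_{q \geq -1}\lambda_q^{2s+2}(\|u_q\|_2^2+\|\mathbb{Q}_q\|_2^2) \\
& + I_u + I_{\mathbb{Q}} + I_{\text{coupl}},
\end{split}
\end{equation}
where $I_u$ collects the contributions of the advection term $(u\cdot\nabla)u$, $I_{\mathbb{Q}}$ those of $(u\cdot\nabla)\mathbb{Q}$, and $I_{\text{coupl}}$ the cross terms arising from $\nabla\cdot\Sigma(\mathbb{Q})$ and $\mathbb{S}(\nabla u,\mathbb{Q})$.

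Next I would decompose each nonlinear term using Bony's paraproduct and rewrite the transport-type pieces with the commutator notation, exactly as in the sketch following Theorem~\ref{chd}, exploiting the divergence free condition $\nabla\cdot u=0$ to eliminate the analogue of $I_{22}$. Each resulting sum is then split at the wavenumber $Q(t)$ into low modes and high modes. The key structural point, already observed in the MHD case, is that the high modes terms can be absorbed into the dissipation: by the definition of $\Lambda(t)$, for every $p>Q(t)$ one has $\lambda_p^{-1+\frac{3}{r}}\|u_p\|_r < c_r\min\{\nu,\mu\}$, so choosing $c_r$ small enough yields
\begin{equation}\notag
|I_{\text{high modes}}| \lesssim c_r\min\{\nu,\mu\}\sum_{q \geq -1}\lambda_q^{2s+2}(\|u_q\|_2^2+\|\mathbb{Q}_q\|_2^2),
\end{equation}
which is dominated by the dissipative term since $c_r<1$. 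The low modes terms, via Lemma~\ref{cmm}, H\"older's and Young's inequalities, are bounded by $\sum_{q \leq Q(t)}\lambda_q\|u_q\|_\infty$ times the energy, producing a Gr\"onwall inequality of the same shape as \rf{grn}. One then introduces the index $q^*$ and the splitting $f_{\leq q^*} + f_{>q^*}$, and uses the relation $2^{\e\bar Q(t)} \lesssim (\min\{\nu,\mu\})^{-1}\|(u,\mathbb{Q})\|_{H^s}$ coming from the definition of $\Lambda(t)$ and Bernstein's inequality to close the argument, concluding regularity on $[0,T]$ under the stated $\limsup$ condition.

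The main obstacle, and the place where genuine new work beyond the NSE and MHD templates is required, is the control of the coupling term $I_{\text{coupl}}$. The tensor $\Sigma(\mathbb{Q})$ contains the highest order contributions $\Delta\mathbb{Q}\,\mathbb{Q}-\mathbb{Q}\,\Delta\mathbb{Q}$ and the quadratic gradient term $\nabla\mathbb{Q}\otimes\nabla\mathbb{Q}$, so after integration against $\Delta_q u_q$ one must verify that the worst interactions either cancel against the symmetric term $\mathbb{S}(\nabla u,\mathbb{Q})$ in the $\mathbb{Q}$-equation or else are of lower order once a derivative is transferred. The delicate point is that the restriction $r\in[2,6)$ (inherited from the definition of $\Lambda(t)$) must be shown to suffice for every Bernstein estimate needed to place these tensorial terms into the dissipation; as in the MHD system, the presence of $\mathbb{Q}$ tightens the admissible range of $r$ even though $\mathbb{Q}$ does not appear in the final regularity condition. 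The bulk-potential term $\mathcal{L}[\partial F(\mathbb{Q})]$ is of lower order and, being a polynomial in $\mathbb{Q}$ with no derivatives, is expected to be harmless at the level of the energy estimate, so I would treat it last and absorb it using the already-established a priori energy bound.
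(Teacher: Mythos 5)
Your overall strategy coincides with the paper's: the authors prove Theorem \ref{d12} by the same wavenumber splitting scheme as Theorems \ref{chd} and \ref{cd1} (frequency-localized higher-order energy estimate, Bony paraproduct plus commutators, low/high mode splitting at $Q(t)$, absorption of high modes into the dissipation, then the Gr\"onwall argument with the index $q^*$), and they state explicitly that the only genuinely new ingredient is the treatment of the $\mathbb{Q}$-coupling, which you correctly single out as the crux. So the route is the right one.

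There is, however, one concrete misstep that would break the argument exactly at the point you identify as delicate. You propose to differentiate $\sum_{q}\lambda_q^{2s}\bigl(\|u_q\|_2^2+\|\mathbb{Q}_q\|_2^2\bigr)$, i.e.\ to measure $u$ and $\mathbb{Q}$ at the \emph{same} Sobolev level. Under the natural scaling of system \rf{lcd} the tensor $\mathbb{Q}$ is dimensionless and it is $\nabla\mathbb{Q}$ that scales like $u$; correspondingly, the basic energy is $\|u\|_2^2+\|\nabla\mathbb{Q}\|_2^2+2\int F(\mathbb{Q})$, and the higher-order functional must be of the form $\sum_q\lambda_q^{2s}\|u_q\|_2^2+\sum_q\lambda_q^{2s+2}\|\mathbb{Q}_q\|_2^2$, i.e.\ $H^s\times H^{s+1}$. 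This is not cosmetic: the cancellation you invoke between $\langle\nabla\cdot(\Delta\mathbb{Q}\,\mathbb{Q}-\mathbb{Q}\,\Delta\mathbb{Q}),u\rangle$ and $\langle\Omega(u)\mathbb{Q}-\mathbb{Q}\,\Omega(u),\cdot\rangle$ only appears when the $\mathbb{Q}$-equation is tested against $-\Delta$ of the localized $\mathbb{Q}$, one derivative level above the test function for $u$. With your functional the stress contribution carries three derivatives on a $\mathbb{Q}$ that is only controlled in $H^s$, the pairing with $\mathbb{S}(\nabla u,\mathbb{Q})$ occurs at the wrong derivative count, and the highest-order terms cannot be closed by any choice of $r\in[2,6)$. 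Relatedly, the paper stresses that \emph{new} commutator estimates — in the spirit of the Hall-MHD commutators of Theorem \ref{mmd}, not just Lemma \ref{cmm} — are required to realize these cancellations after localization, since $\Delta_q$ does not commute with multiplication by $\mathbb{Q}$ or $\mathbb{Q}_{\leq p-2}$; your plan gestures at ``transferring a derivative'' but does not supply these estimates, and without them the claimed absorption of the coupling terms into the dissipation and into $\sum_{q\leq Q(t)}\lambda_q\|u_q\|_\infty$ is unsubstantiated. Fixing the functional to $H^s\times H^{s+1}$ and importing the commutator machinery for the tensorial products would bring your outline in line with the paper's proof.
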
 

The analysis shares much in common with that of system (\ref{mhd}). However, additional new commutator estimates are needed to handle the terms involving $\mathbb{Q}.$ An obvious yet meaningful implication of theorems (\ref{d11}) and (\ref{d12}) is given by the following corollary.
\begin{Corollary} Let $(u,\mathbb{Q})$ be a weak solution to system (\ref{lcd}). Suppose that either the Prodi-Serrin-Ladyzhenskaya type condition $$ u \in L^s(0,T; L^r(\R^3)), \ \frac{2}{s}+\frac{3}{r} = 1, \ 3 <r<6$$
or the Beale-Kato-Majda type condition
$$ \int_0^T \|\nabla \times u (t)\|_\infty \mathrm{d}t <+\infty$$ holds true, then $(u, \mathbb{Q})$ is regular on $[0,T].$
\end{Corollary}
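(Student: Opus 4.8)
The plan is to show that each of the two hypotheses forces the low-modes quantity appearing in Theorem \ref{d11} to be integrable in time, after which regularity on $[0,T]$ is immediate from that theorem (either hypothesis can equally be fed into Theorem \ref{d12}). Throughout I assume, as required to apply Theorems \ref{d11} and \ref{d12}, that $(u,\mathbb{Q})$ is already regular on $[0,T)$, so that the wavenumber $\Lambda(t)$ is finite for a.e. $t$, and I fix $r\in(3,6)$ to be the exponent appearing in the Prodi--Serrin--Ladyzhenskaya condition, using the same $r$ in the definition of $\Lambda(t)$ (this is legitimate since $(3,6)\subset[2,6)$). Two standard reductions will be used: (a) the Littlewood--Paley blocks and the low-pass projection $(\cdot)_{\le Q}$ are bounded on $L^\infty$ and on $L^r$ uniformly in $q$ and $Q$, and on a function with annular spectrum one derivative costs exactly $\lambda_q$ (Bernstein); (b) for a divergence-free field $-\Delta u=\nabla\times(\nabla\times u)$, so $\nabla u$ is a homogeneous degree-zero Fourier multiplier applied to $\nabla\times u$, whence $\|\Delta_q\nabla u\|_\infty\lesssim\|\Delta_q(\nabla\times u)\|_\infty$ uniformly in $q$.

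For the Beale--Kato--Majda condition the argument is direct. Since the low-pass projection is bounded on $B^0_{\infty,\infty}$ and, by (b), $\|\Delta_q\nabla u\|_\infty\lesssim\|\Delta_q(\nabla\times u)\|_\infty$ for every $q$, I obtain
\[
\|\nabla u_{\le Q(t)}\|_{B^0_{\infty,\infty}}\lesssim\|\nabla u\|_{B^0_{\infty,\infty}}\lesssim\|\nabla\times u\|_{B^0_{\infty,\infty}}\lesssim\|\nabla\times u(t)\|_\infty,
\]
the final step using the uniform $L^\infty$-boundedness of the blocks. Integrating in time and invoking the hypothesis gives $\int_0^T\|\nabla u_{\le Q(t)}\|_{B^0_{\infty,\infty}}\,dt<\infty$, which is precisely the condition of Theorem \ref{d11}; hence $(u,\mathbb{Q})$ is regular on $[0,T]$.

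For the Prodi--Serrin--Ladyzhenskaya condition the key is to convert the critical norm into a bound on $\Lambda(t)$. First, by the minimality in the definition of $\Lambda(t)$, for $\Lambda(t)>1$ there is a frequency $p_0\ge Q(t)$ at which the defining smallness is violated, i.e. $\lambda_{p_0}^{-1+3/r}\|u_{p_0}\|_r\ge c_r\min\{\nu,\mu\}$; since $\lambda_{p_0}\ge\Lambda(t)$ and $1-\tfrac{3}{r}>0$ (here $r>3$ is used), this together with $\|u_{p_0}\|_r\lesssim\|u(t)\|_r$ yields the reverse bound $\Lambda(t)\lesssim\|u(t)\|_r^{\,r/(r-3)}$. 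Next, for $q\le Q(t)$, Bernstein and $\|u_q\|_r\lesssim\|u\|_r$ give $\lambda_q\|u_q\|_\infty\lesssim\lambda_q^{1+3/r}\|u_q\|_r\le\Lambda(t)^{1+3/r}\|u(t)\|_r$, so that
\[
\|\nabla u_{\le Q(t)}\|_{B^0_{\infty,\infty}}\lesssim\sup_{q\le Q(t)}\lambda_q\|u_q\|_\infty\lesssim\Lambda(t)^{1+3/r}\|u(t)\|_r.
\]
Inserting the reverse bound and tracking the exponents produces $\|\nabla u_{\le Q(t)}\|_{B^0_{\infty,\infty}}\lesssim\|u(t)\|_r^{\,2r/(r-3)}=\|u(t)\|_r^{\,s}$, the last identity being exactly the scaling relation $\tfrac{2}{s}+\tfrac{3}{r}=1$. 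Integrating, $\int_0^T\|\nabla u_{\le Q(t)}\|_{B^0_{\infty,\infty}}\,dt\lesssim\|u\|_{L^s(0,T;L^r)}^s<\infty$, and Theorem \ref{d11} again yields regularity on $[0,T]$. The complementary range $\Lambda(t)\le1$ contributes only finitely many fixed low blocks, bounded crudely through Bernstein on the ball by $\|u(t)\|_r$, and is harmless on the finite interval $[0,T]$.

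The one genuinely delicate point is the exponent bookkeeping in the Prodi--Serrin case: the argument closes precisely because the reverse bound $\Lambda\lesssim\|u\|_r^{r/(r-3)}$ forced by the definition of the wavenumber and the loss $\Lambda^{1+3/r}$ incurred when summing the low modes combine to give exactly the integrable power $\|u\|_r^{s}$ singled out by the scaling identity. The constraint $r>3$ makes $1-\tfrac{3}{r}$ positive, so that the reverse bound on $\Lambda$ is available, while $r<6$ keeps $r$ within the admissible range for which $\Lambda(t)$ and Theorem \ref{d11} are defined; no harder harmonic-analytic input than Bernstein's inequality and the divergence-free identity $-\Delta u=\nabla\times(\nabla\times u)$ is required.
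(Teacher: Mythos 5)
Your proof is correct and is exactly the deduction the paper leaves implicit: the corollary is stated there without proof, as an ``obvious implication'' of Theorems \ref{d11} and \ref{d12}, and the intended argument is precisely yours --- each hypothesis forces $\int_0^T\|\nabla u_{\le Q(t)}\|_{B^0_{\infty,\infty}}\,\mathrm{d}t<\infty$, so Theorem \ref{d11} applies. Your exponent bookkeeping in the Prodi--Serrin case checks out (the reverse bound $\Lambda(t)\lesssim\|u(t)\|_r^{r/(r-3)}$ from minimality, which is where $r>3$ enters, combines with the $\Lambda^{1+3/r}$ loss from Bernstein to give exactly $\|u(t)\|_r^{s}$), as does your observation that $r<6$ is what keeps $r$ in the admissible range for the LCD wavenumber.
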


\subsection{The chemotaxis-Navier-Stokes system}
The following chemotaxis-Navier-Stokes system is a coupling of the NSE with the parabolic Keller-Segel system, emerging from the study of aggregation behaviors of chemotactic cells.
\begin{equation}\label{ksnse}
\begin{cases}
n_t+u\cdot \nabla n =\kappa \Delta n -\nabla \cdot(n \nabla c),\\
c_t+u\cdot \nabla c=\mu \Delta c -nc,\\
u_t+(u \cdot \nabla)u+\nabla p=\nu \Delta u-n\nabla\Phi,\\
\nabla\cdot u =0, \ (t,x) \in \R^+ \times \mathbb{T}^3,
\end{cases}
\end{equation}
where $n$ is the cell density, $c$ the concentration of the attractant, $u$ the fluid velocity, $p$ the fluid pressure, and $\nabla \Phi$ a constant vector field. System (\ref{ksnse}) models the situation in which the cells e.g., \textit{Bacillus subtilis}, which are attracted to certain chemical substance e.g., oxygen, swim in sessile drops of water. Lorz \cite{L3} established the existence of local-in-time weak solutions to system (\ref{ksnse}) on bounded domains in 3D. Global existence of weak solutions under more general assumptions was proved via entropy-energy estimates by Winkler \cite{W}. 
%We refer readers to works \cite{JWZ} for more results on the well-posedness of system (\ref{ksnse}).

In particular, Chae, Kang and Lee \cite{CKL1, CKL2} proved Prodi-Serrin-Ladyzhenskaya type regularity criteria in the following theorem.
\begin{Theorem}\label{ckl} 
Let $(n,c,u)$ be a weak solution to system (\ref{ksnse}) which is regular on $[0,T),$ that is, $(n, c, u) \in L^\infty(0,T; H^{m-1}\times H^m \times H^m(\R^3)).$ If
\begin{equation}\notag
\begin{split}
&\|u\|_{L^q(0,T; L^p(\R^3))}+\|\nabla c\|_{L^2(0,T;L^\infty(\R^3))}<\infty,\\
&\text{or }\|u\|_{L^q(0,T; L^p(\R^3))}+\|n\|_{L^r(0,T;L^s(\R^3))}<\infty
,
\end{split}
\end{equation}
where $\frac{3}{p}+\frac{2}{q}=1,\ 3 < p \leq \infty$ and $\frac{3}{s}+\frac{2}{r} =2,\ \frac{3}{2}< s \leq \infty,$ then $(n,c,u)$ is regular on $[0,T]$.
\end{Theorem}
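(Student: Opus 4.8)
Since the hypotheses are integrability conditions on a solution already known to be regular on $[0,T)$, the plan is to prove a continuation estimate: under either alternative I would show that the combined higher-order norm
$$E(t):=\|n(t)\|_{H^{m-1}}^2+\|c(t)\|_{H^m}^2+\|u(t)\|_{H^m}^2$$
stays bounded as $t\to T^{-}$. A uniform bound on $E$ feeds the local well-posedness theory and allows the solution to be continued past $T$, which is precisely regularity on $[0,T]$. The first step is to differentiate $E$ in time, inserting the three equations of system \ref{ksnse}; the parabolic terms supply the dissipation $-\kappa\|\nabla n\|_{H^{m-1}}^2-\mu\|\nabla c\|_{H^m}^2-\nu\|\nabla u\|_{H^m}^2$, that is, one extra derivative of each unknown, and this is the budget I would spend to absorb the nonlinear contributions. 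The deliberate asymmetry --- $n$ measured one derivative below $c$ and $u$ --- is forced by the derivative loss in the chemotactic flux, and it dictates that every estimate be organized, by integration by parts, so that the top-order derivative always lands on the more regular factor ($c$ or $u$) and is absorbed by its own dissipation.

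The transport and Navier--Stokes convection terms are treated by the classical Prodi--Serrin mechanism. Applying $\nabla^{m}$ --- respectively $\nabla^{m-1}$ on the $n$-equation --- and combining the Kato--Ponce commutator with $\nabla\cdot u=0$ removes the top self-interaction, while the remaining factors of $u$ are split by Gagliardo--Nirenberg interpolation and absorbed into the dissipation, leaving a Gr\"onwall weight proportional to $\|u\|_{L^p}^{q}$, integrable by the hypothesis $\frac{3}{p}+\frac{2}{q}=1$. The forcing $n\nabla\Phi$ is the mildest term: as $\nabla\Phi$ is constant, one integration by parts rewrites $\int\nabla^{m}(n\nabla\Phi)\cdot\nabla^{m}u$ as a pairing of $\nabla^{m-1}n$ against $\nabla^{m+1}u$, absorbed by the $\nu$-dissipation and leaving only the lower-order $\|n\|_{H^{m-1}}^2$. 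The reaction term $nc$ is handled the same way: after integrating by parts to keep $n$ at order $m-1$, its top contributions are absorbed by the $\mu$-dissipation with the aid of Gagliardo--Nirenberg interpolation.

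The crux is the chemotactic flux $\nabla\cdot(n\nabla c)$ --- the nonlinearity behind finite-time aggregation in the pure Keller--Segel system --- which is why the velocity bound alone cannot close the estimate and one of the two auxiliary conditions is indispensable. Testing the $H^{m-1}$ energy against this term and expanding by Leibniz, the benign contribution keeps all derivatives on $n$ and carries the factor $\|\nabla c\|_{\infty}$, which is the weight the first hypothesis makes integrable; the dangerous contributions are those in which the derivatives fall on $c$, producing $n\,\nabla^{m+1}c$ (or $n\,\nabla^{m}c$) against a derivative of $n$. Controlling the latter requires spending both dissipations at once: the $\nabla^{m+1}c$ is absorbed by the $\mu$-dissipation, while the $n$-factor and the remaining derivative of $n$ are measured against the hypothesis and the $\kappa$-dissipation, apportioned by Gagliardo--Nirenberg interpolation. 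Here the two exponent relations are decisive --- $\|\nabla c\|_{\infty}\in L^2(0,T)$ under the first alternative and the scaling $\frac{3}{s}+\frac{2}{r}=2$ under the second --- each being exactly the balance that leaves an $L^1$-in-time Gr\"onwall weight once the dissipations are used. I expect the main obstacle to be arranging this so that the $n$- and $c$-estimates close simultaneously: the bad term ties a derivative of $n$ to high derivatives of $c$, so neither unknown can be controlled in isolation and the interpolation must be split between the two diffusions with quantitative care. Once this is done the remaining terms assemble into a differential inequality $\frac{\mathrm{d}}{\mathrm{d}t}E\leq G(t)\,E$ with $\int_0^T G\,\mathrm{d}t<\infty$ in either case, and Gr\"onwall's inequality gives the uniform bound on $E$ and hence the continuation.
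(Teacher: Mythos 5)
The paper does not actually prove this statement: Theorem \ref{ckl} is quoted as a known result of Chae, Kang and Lee \cite{CKL1, CKL2}, so there is no in-paper argument to compare yours against. Your outline is essentially the strategy of those original papers --- a Gr\"onwall estimate for the $H^{m-1}\times H^m\times H^m$ energy, with the convection terms closed by the Prodi--Serrin condition on $u$ and the chemotactic flux closed by whichever of the two auxiliary conditions is assumed. One caveat: as written, your single top-order Gr\"onwall inequality does not quite close by itself. The dangerous term $\int n\,\nabla^{m}c\cdot\nabla^{m}n\,\mathrm{d}x$ forces factors such as $\|n\|_{L^\infty}$ or $\|\nabla c\|_{L^4}$ into the weight $G(t)$, and under the first alternative these are not directly furnished by the hypotheses; the actual proof first runs a cascade of lower-order a priori estimates (e.g.\ $n\in L^\infty_t L^2_x$, $\nabla c\in L^\infty_t L^2_x\cap L^2_t H^1_x$, $u\in L^\infty_t H^1_x$), crucially exploiting the structural bounds $\|n(t)\|_1=\|n_0\|_1$ and $\|c(t)\|_\infty\le\|c_0\|_\infty$ recorded just below the theorem in the text, and only then bootstraps to order $m$, where Sobolev embedding of the already-controlled norms closes the estimate. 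Your sketch acknowledges the difficulty (``the interpolation must be split between the two diffusions with quantitative care'') but does not name this intermediate layer, which is where the real work lies.
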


System (\ref{ksnse}) satisfies the following scaling property: suppose that $(n, c, u)(t,x)$ is a solution to system (\ref{ksnse}) with initial data $(n_0, c_0, u_0)(x),$ then
\[n_\lambda(t, x)=\lambda^2 n(\lambda^2t, \lambda x),\ c_\lambda(t, x)= c(\lambda^2t, \lambda x), \ u_\lambda(t, x)=\lambda u(\lambda^2t, \lambda x)\ \] %p_\lambda(t, x)=\lambda^2 p(\lambda^2t, \lambda x)
also solves system (\ref{ksnse}) with initial data 
$$n_{\lambda, 0}=\lambda^2n(\lambda x),\ \ c_{\lambda,0}= c(\lambda x),\ \ u_{\lambda, 0}=\lambda u(\lambda x).$$
Obviously, the Sobolev space $\dot H^{-\frac12}\times \dot H^{\frac12}\times\dot H^{\frac32}(\R^3)$ is critical according to the above scaling of the system. Notice that the conditions in theorem (\ref{ckl}) are in terms of scaling-invariant quantities. In addition, a weak solution $(n,c)$ to system (\ref{ksnse}) possesses the following properties:
$$\|n(t)\|_1=\|n_0\|_1, \ \|c(t)\|_\infty \leq \|c_0\|_\infty.$$

The following low modes regularity criterion, proved in \cite{DL1}, seems somehow surprising, given that the wavenumber splitting method was applied to the $c$ equation, which differs from fluid equations in nature. 
\begin{Theorem}\label{dhl}
Let $(n(t), c(t), u(t))$ be a weak solution to (\ref{ksnse}) on $[0,T].$
Assume that $(n(t), c(t), u(t))$ is regular on $[0, T)$ and
\begin{equation}\notag
\int^{T}_0 \|\nabla c_{\leq Q_c(t)}(t)\|^2_{L^ \infty}+\|u_{\leq Q_u(t)}(t)\|_{B^1_{\infty, \infty}} \mathrm{d}t < \infty,  
\end{equation}
then $(n(t), c(t), u(t))$ is regular on $[0, T].$
\end{Theorem}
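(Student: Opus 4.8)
The plan is to run the wavenumber splitting scheme used for the NSE in Theorem \rf{csh}, now carried simultaneously for the triple $(n,c,u)$ and tracking the combined higher order energy
\[
E(t)=\sum_{q\geq -1}\lambda_q^{2(s-1)}\|n_q\|_2^2+\sum_{q\geq -1}\lambda_q^{2s}\|c_q\|_2^2+\sum_{q\geq -1}\lambda_q^{2s}\|u_q\|_2^2,
\]
with $s>\frac12$, which is exactly the subcritical class $H^{s-1}\times H^s\times H^s$ appearing in Theorem \rf{ckl}. First I would apply $\Delta_q$ to each equation of \rf{ksnse} and pair the density equation with $\lambda_q^{2(s-1)}n_q$, the concentration equation with $\lambda_q^{2s}c_q$, and the velocity equation with $\lambda_q^{2s}u_q$. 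Summing in $q$ produces the good dissipative terms $-\kappa\sum_q\lambda_q^{2s}\|n_q\|_2^2$, $-\mu\sum_q\lambda_q^{2s+2}\|c_q\|_2^2$ and $-\nu\sum_q\lambda_q^{2s+2}\|u_q\|_2^2$, against which all nonlinear contributions must be balanced.

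The three transport terms $u\cdot\nabla n$, $u\cdot\nabla c$ and $(u\cdot\nabla)u$ are treated exactly as the term $I$ in \rf{eng}: Bony's paraproduct, the commutator Lemma \rf{cmm} together with $\nabla\cdot u=0$, and the splitting of each sum at the wavenumber $Q_u(t)$ decompose them into low modes, bounded by $Q_u(t)\,\|u_{\leq Q_u(t)}\|_{B^1_{\infty,\infty}}\,E(t)$, and high modes, which the defining inequality for $\Lambda_u(t)$ absorbs into small multiples $c_r\nu$, $c_r\kappa$, $c_r\mu$ of the respective dissipations. The forcing $-n\nabla\Phi$ in the velocity equation is linear with $\nabla\Phi$ constant, so it yields only $\lesssim\|n\|_{H^s}\|u\|_{H^s}$, and the reaction term $-nc$ in the concentration equation is controlled using the a priori bound $\|c(t)\|_\infty\leq\|c_0\|_\infty$, giving $\lesssim\|c_0\|_\infty\|n\|_{H^s}\|c\|_{H^s}$; in both cases one $\|n\|_{H^s}$ factor is absorbed into the $\kappa$-dissipation by Young's inequality while the remaining factor is $\lesssim E(t)$.

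The essential new difficulty, with no counterpart in the NSE or MHD analysis, is the chemotactic coupling $-\nabla\cdot(n\nabla c)$, whose contribution to the $H^{s-1}$ estimate for $n$ is
\[
K=-\sum_{q\geq -1}\lambda_q^{2(s-1)}\int_{\mathbb{T}^3}\Delta_q\nabla\cdot(n\nabla c)\,n_q\,\dd x.
\]
I would expand $n\nabla c$ by Bony's paraproduct into $n_{\leq p-2}(\nabla c)_p$, $n_p(\nabla c)_{\leq p-2}$ and the diagonal $\tilde n_p(\nabla c)_p$, and split each at the wavenumber $Q_c(t)$. In the pieces where $\nabla c$ carries frequencies below $Q_c(t)$, H\"older's and Bernstein's inequalities give a bound of the form $\|(\nabla c)_{\leq Q_c(t)}\|_\infty\,\lambda_q^{2s-1}\|n_q\|_2^2$; writing this as $(\lambda_q^s\|n_q\|_2)\,(\|(\nabla c)_{\leq Q_c(t)}\|_\infty\lambda_q^{s-1}\|n_q\|_2)$ and applying Young's inequality absorbs $\lambda_q^{2s}\|n_q\|_2^2$ into the $\kappa$-dissipation and leaves the integrable weight $\|(\nabla c)_{\leq Q_c(t)}\|_\infty^2\,E(t)$, precisely the squared quantity appearing in the hypothesis. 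The pieces where $\nabla c$ sits above $Q_c(t)$ are exactly those controlled by the defining inequality for $\Lambda_c(t)$, and are absorbed into the $\mu$- and $\kappa$-dissipations with small constant. Getting this derivative count right, so that $K$ closes against $\kappa$ and $\mu$ rather than overwhelming them, is the step I expect to be the main obstacle, and it is what dictates the choice of the relative Sobolev exponents $(s-1,s,s)$ above.

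Collecting all the estimates yields a Gr\"onwall inequality of the form
\[
\frac{\dd}{\dd t}E(t)\lesssim\Big(Q_u(t)\,\|u_{\leq Q_u(t)}\|_{B^1_{\infty,\infty}}+\|(\nabla c)_{\leq Q_c(t)}\|_\infty^2+1\Big)E(t).
\]
As in the passage following \rf{grw}, the definitions of $\Lambda_u(t)$ and $\Lambda_c(t)$ together with Bernstein's inequality give $Q_u(t),Q_c(t)\lesssim 1+\log E(t)$, so the first term becomes $\|u_{\leq Q_u(t)}\|_{B^1_{\infty,\infty}}(1+\log E(t))\,E(t)$ and is handled by the same logarithmic Gr\"onwall argument. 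Since the hypothesis guarantees $\int_0^T\big(\|u_{\leq Q_u(t)}\|_{B^1_{\infty,\infty}}+\|(\nabla c)_{\leq Q_c(t)}\|_\infty^2\big)\dd t<\infty$, this bounds $E(t)$ on $[0,T]$, placing $(n,c,u)$ in $L^\infty(0,T;H^{s-1}\times H^s\times H^s)$ and hence giving regularity up to time $T$.
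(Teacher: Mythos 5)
Your overall scheme is the one the paper uses: the combined higher-order energy with relative exponents $(s-1,s,s)$ for $(n,c,u)$ (the paper writes it as $\dot H^{s}\times\dot H^{s+1}\times\dot H^{s+1}$ with $s=-\varepsilon$, i.e.\ your $s=1-\varepsilon$), wavenumber splitting of the transport terms at $Q_u(t)$, and splitting of the chemotactic term $\nabla\cdot(n\nabla c)$ at $Q_c(t)$ with Young's inequality producing exactly the squared weight $\|\nabla c_{\leq Q_c(t)}\|_\infty^2$ in the hypothesis. The genuine gap is in your final sentence. The class $L^\infty(0,T;H^{s-1}\times H^s\times H^s)$ with $s$ slightly above $\tfrac12$ is \emph{not} subcritical in all three components: under the scaling $n_\lambda=\lambda^2 n(\lambda^2t,\lambda x)$, $c_\lambda=c(\lambda^2t,\lambda x)$, $u_\lambda=\lambda u(\lambda^2t,\lambda x)$, the critical Sobolev exponents are $-\tfrac12$ for $n$ and $\tfrac12$ for $u$, but $\tfrac32$ for $c$ (the concentration is dimensionless under the scaling, so it takes $\tfrac32$ derivatives in $L^2(\R^3)$ to reach invariance). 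Since the Gr\"onwall argument only closes for $s$ just above $\tfrac12$ --- this is also what forces the narrow window $3<r<\tfrac{3}{1-\varepsilon}$ in the definition of $\Lambda_c(t)$ --- you end with $c\in L^\infty(0,T;H^{1-\varepsilon})$, which is supercritical for $c$, and ``hence giving regularity'' does not follow. Your identification of $H^{s-1}\times H^s\times H^s$, $s>\tfrac12$, with ``the subcritical class appearing in Theorem \ref{ckl}'' is the source of the error: that theorem uses $m$ large, and the index pattern alone does not confer subcriticality.

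The paper closes precisely this gap with a step your proposal omits: after the Gr\"onwall bound, $c$ must be lifted to a subcritical space by a parabolic bootstrap applied to the $c$-equation, which uses the a priori maximum principle $\|c(t)\|_\infty\le\|c_0\|_\infty$, the derivative-gain lemma for the heat equation stated at the end of that subsection, and the mixed derivative theorem of Pr\"uss and Simonett. Only after this lift, combined with the subcritical information already in hand for $n$ and $u$, can one conclude regularity on $[0,T]$. The rest of your outline (treatment of the transport terms, of $-n\nabla\Phi$ and $-nc$, and the low-mode/high-mode dichotomy for $n\nabla c$) is consistent with the paper, but without the bootstrap for $c$ the argument does not reach the conclusion of Theorem \ref{dhl}.
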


While the definition of the wavenumber $\Lambda_u(t)$ and the condition on $u$ are the same as those for the NSE in \cite{CD}, to weaken the condition on $\nabla c$ to low modes $\nabla c_{\leq Q_c}$ is rather challenging due to the lack of divergence free condition. This difficulty is reflected by the rather narrow range for the parameter $r$ in the following definition of the wavenumber $\Lambda_c(t):$ 
$$\Lambda_c(t)=\min \{ \lambda_p : \lambda_p^\frac{3}{r}\|c_p(t)\|_r < C_0\min\{\kappa, \mu, \nu\}, \forall p >q, q \in \mathbb{N}\}, \ 3<r <\frac{3}{1-\varepsilon},$$
where $C_0$ and $\varepsilon$ are both positive small constants.

Considering the frequency localized equations of system (\ref{ksnse}) in Sobolev space $(\dot H^s\times \dot H^{s+1} \times \dot H^{s+1})(\T^3)$ and analyzing using the same set of tools as before lead to a Gr\"onwall type inequality from which one can conclude that \begin{gather*} (n,c,u)\in  L^\infty(0,T; \dot H^s\times \dot H^{s+1} \times \dot H^{s+1}(\T^3)),\\ 
(\nabla n, \nabla c, \nabla u) \in  L^2(0,T; \dot H^{s}\times \dot H^{s+1} \times \dot H^{s+1}(\T^3)) 
\end{gather*} for certain $s=-\varepsilon <0.$ While $n$ and $u$ are already in subcritical spaces, one must lift $c$ to a subcritical space by a parabolic bootstrap argument, which could be carried out thanks to the mixed derivative theorem found in the works of Pr\"uss and Simonett \cite{PS}, the fact that $c \in L^\infty(0,T; L^\infty(\T^3))$ as well as the following lemma on the derivative gain for the heat equation.
\begin{Lemma}
Let $u$ be a solution to the heat equation $u_t -\Delta u= f$ on $\T^d, d\geq 2.$ Assume that $u_0 \in H^{\a+1}$ and $f \in L^2(0,T; H^{\a})$ for $\a \in \R,$ then $$u \in L^2 (0,T; H^{\a+2})\cap H^1(0,T;H^\a).$$
\end{Lemma}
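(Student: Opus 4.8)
The plan is to treat this as a standard parabolic maximal-regularity estimate, carried out block-by-block in the Littlewood--Paley decomposition so that the formal energy computation is rigorous and so that it fits the harmonic-analysis framework of the paper. Applying $\Delta_q$ to the heat equation, each block $u_q=\Delta_q u$ solves the decoupled frequency-localized equation
$$\partial_t u_q - \Delta u_q = f_q, \qquad u_q(0)=\Delta_q u_0,$$
and taking the $L^2$ inner product with $u_q$ and integrating by parts in $x$ gives the block energy identity
$$\frac{1}{2}\frac{\mathrm{d}}{\mathrm{d}t}\|u_q\|_2^2 + \|\nabla u_q\|_2^2 = \langle f_q, u_q\rangle.$$

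The key point is that the dissipation supplies a gain of exactly two derivatives. For $q\geq 0$ the Fourier support of $u_q$ lies in the annulus $|\xi|\sim\lambda_q$, so the reverse Bernstein inequality gives the spectral gap $\|\nabla u_q\|_2^2 \gtrsim \lambda_q^2\|u_q\|_2^2$. Multiplying the block identity by $\lambda_q^{2\alpha+2}$ and splitting the forcing by Young's inequality,
$$\lambda_q^{2\alpha+2}\langle f_q, u_q\rangle = \langle \lambda_q^{\alpha} f_q,\ \lambda_q^{\alpha+2} u_q\rangle \leq C\lambda_q^{2\alpha}\|f_q\|_2^2 + \tfrac{1}{2}c\,\lambda_q^{2\alpha+4}\|u_q\|_2^2,$$
I absorb the last term into the dissipation to obtain
$$\frac{1}{2}\frac{\mathrm{d}}{\mathrm{d}t}\big(\lambda_q^{2\alpha+2}\|u_q\|_2^2\big) + \tfrac{1}{2}c\,\lambda_q^{2\alpha+4}\|u_q\|_2^2 \leq C\lambda_q^{2\alpha}\|f_q\|_2^2.$$
Summing over $q$, invoking the norm equivalence $\|u\|_{H^s}^2 \sim \sum_q \lambda_q^{2s}\|u_q\|_2^2$, and integrating over $[0,T]$ yields
$$\sup_{0\leq t\leq T}\|u(t)\|_{H^{\alpha+1}}^2 + \int_0^T \|u(t)\|_{H^{\alpha+2}}^2\,\mathrm{d}t \lesssim \|u_0\|_{H^{\alpha+1}}^2 + \int_0^T \|f(t)\|_{H^\alpha}^2\,\mathrm{d}t,$$
which is the first assertion $u\in L^2(0,T;H^{\alpha+2})$, with the bonus $u\in L^\infty(0,T;H^{\alpha+1})$. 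The only block lacking a spectral gap is the mean mode $k=0$ on $\T^d$, where the equation reduces to the trivial ODE $\tfrac{\mathrm{d}}{\mathrm{d}t}\hat u_0 = \hat f_0$; since $\hat f_0\in L^2(0,T)\subset L^1(0,T)$, this mode is bounded in time and contributes negligibly to every $H^s$ norm, so it is handled directly.

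The second assertion is then immediate from the equation itself. Writing $u_t = \Delta u + f$ and using $\|\Delta u\|_{H^\alpha}\lesssim\|u\|_{H^{\alpha+2}}$ gives
$$\int_0^T \|u_t(t)\|_{H^\alpha}^2\,\mathrm{d}t \lesssim \int_0^T \|u(t)\|_{H^{\alpha+2}}^2\,\mathrm{d}t + \int_0^T \|f(t)\|_{H^\alpha}^2\,\mathrm{d}t <\infty,$$
so $u\in H^1(0,T;H^\alpha)$, completing the argument. I do not expect a genuine obstacle here; the sole point requiring care is the rigor of the formal time-differentiation and integration by parts, and the block-by-block formulation sidesteps it, since on each annulus all the relevant norms are finite and mutually equivalent and the heat semigroup estimate is elementary. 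The interchange of the $q$-summation with the time integral is justified by monotone convergence once the uniform-in-$q$ bounds are established, and any Gr\"onwall-type constant that would appear if one instead ran the estimate with the inhomogeneous Bessel potential $(1-\Delta)^{(\alpha+1)/2}$ is harmless on the finite interval $[0,T]$.
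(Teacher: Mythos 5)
Your proof is correct. The paper states this lemma without proof (it is invoked as a standard maximal $L^2$-regularity fact for the heat equation in the bootstrap argument for the chemotaxis--Navier--Stokes system), so there is no in-paper argument to compare against; your frequency-localized energy estimate is the natural proof and is complete, including the two points that actually require care on $\T^d$: the absence of a spectral gap for the mean mode $k=0$, which you correctly reduce to the ODE $\frac{\mathrm{d}}{\mathrm{d}t}\hat u(0)=\hat f(0)$ with $\hat f(0)\in L^2(0,T)$, and the validity of the block energy identity, which is rigorous here because each projection $\Delta_q u$ on the torus involves only finitely many Fourier modes. The deduction of $u\in H^1(0,T;H^\a)$ directly from the equation is likewise standard and correct.
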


It is noteworthy that the first two equations of system (\ref{ksnse}) i.e., the Keller-Segel model, arise from a discipline quite distant from hydrodynamics. The lack of divergence free conditions, along with the entropy functional $\int_{\T^3} n \ln n \mathrm{d}x$ further dissociates the $n$ and $c$ equations from the family of equations of incompressible fluid flows. The fact that the wavenumber splitting method finds its application in a model of population dynamics motivates one to ask the question whether or how one can apply the same harmonic analysis techniques that bear fruits in the realm of fluids to other utterly different models.

\end{document}